\numberwithin{equation}{section}
\renewcommand{\subsection}[1]{\refstepcounter{subsection}\hspace{-\parindent}{\bf (\arabic{section}\alph{subsection}) #1}.}
\newcommand{\iso}{\cong}
\newcommand{\smooth}{C^\infty}
\newcommand{\htp}{\simeq}
\newcommand{\half}{{\textstyle\frac{1}{2}}}
\newcommand{\bC}{{\mathbb C}}
\newcommand{\bF}{{\mathbb F}}
\newcommand{\bK}{{\mathbb K}}
\newcommand{\bQ}{{\mathbb Q}}
\newcommand{\bR}{{\mathbb R}}
\newcommand{\bZ}{{\mathbb Z}}
\newcommand{\scrA}{\EuScript A}
\newcommand{\scrE}{\EuScript E}
\newcommand{\scrH}{\EuScript H}
\newcommand{\scrJ}{\EuScript J}
\newcommand{\scrL}{\EuScript L}
\newcommand{\scrM}{\EuScript M}
\newcommand{\scrP}{\EuScript P}
\newcommand{\scrQ}{\EuScript Q}
\newcommand{\scrS}{\EuScript S}
\newcommand{\noarrow}[1][]{
    \mathrel{%
        \tikz [x=0.65cm, y=\heightof{\strut}, line width=.1ex, ->, baseline, #1] 
        \draw (0,0.3) -- (1,0.3) ;
   }%
}%
\newcommand{\yesarrow}[1][]{% #1 = optional draw paramaters (applies to both arrow and dot)
    \mathrel{%
        \tikz [x=0.65cm, y=\heightof{\strut}, line width=.1ex, ->, baseline, #1] 
            \draw (0,0.3) -- (1,0.3) 
            node [pos=0.5,shape=circle, fill=black, draw, inner sep=1pt, #1] {};
    }%
}%
\theoremstyle{plain}
\newtheorem{lemma}{Lemma}
\numberwithin{lemma}{section}
\newtheorem{definition}[lemma]{Definition}
\newtheorem{example}[lemma]{Example}
\newtheorem{addendum}[lemma]{Addendum}
\newtheorem{assumption}[lemma]{Assumption}
\newtheorem{application}[lemma]{Application}
\newtheorem{proposition}[lemma]{Proposition}
\newtheorem{theorem}[lemma]{Theorem}
\newtheorem{corollary}[lemma]{Corollary}
\newtheorem{remark}[lemma]{Remark}
\newtheorem{setup}[lemma]{Setup}
\newtheorem{data}[lemma]{Data}
\author{Paul Seidel}
\title[Pair-of-pants product]{The equivariant pair-of-pants product\\ in fixed point Floer cohomology}
\begin{document}

\maketitle
\begin{abstract}
We use equivariant methods and product structures to derive a relation between the fixed point Floer cohomology of an exact symplectic automorphism and that of its square.
\end{abstract}

\section{Introduction}

This paper concerns the Floer cohomology of symplectic automorphisms, and its behaviour under iterations: more specifically, when passing to the square of a given automorphism (one expects parallel results for odd prime powers, but they are beyond our scope here). The concrete situation is as follows. Let $\phi$ be an exact symplectic automorphism of a Liouville domain $M$ (there are some additional conditions on $\phi$, see Setup \ref{th:setup-1} for details).  The Floer cohomology $\mathit{HF}^*(\phi)$ (defined in \cite{dostoglou-salamon93}, generalizing the Hamiltonian case \cite{floer88}) is a $\bZ/2$-graded $\bK$-vector space. Here and throughout the paper, $\bK = \bF_2$ is the field with two elements. The Floer cohomology of $\phi^2$ carries additional structure, namely an action of $\bZ/2$. Denote the invariant part by $\mathit{HF}^*(\phi^2)^{\bZ/2}$. From the viewpoint of applications, our most significant result is the following Smith-type inequality (the name refers to a topological result reproduced as \eqref{eq:classical-smith} below, see \cite[Chapter III, 4.3]{borel60}):

\begin{corollary} \label{th:floer-smith}
There is an inequality of total dimensions,
\begin{equation} \label{eq:quantum-smith}
 \mathrm{dim}\, \mathit{HF}^*(\phi^2)^{\bZ/2} \geq \mathrm{dim}\, \mathit{HF}^*(\phi).
\end{equation}
\end{corollary}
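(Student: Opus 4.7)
The plan is to pass to an equivariant fixed-point Floer cohomology $\mathit{HF}^*_{\bZ/2}(\phi^2)$---a module over $H^*(B\bZ/2;\bK) = \bK[u]$---and combine a localisation identity supplied by the equivariant pair-of-pants product with a rank estimate from the Borel spectral sequence. The equivariant theory is constructed by coupling the Floer equation for $\phi^2$ with the standard $S^\infty$-parametrisation of $E\bZ/2$; the relevant $\bZ/2$-action sends a $\phi^2$-fixed point $x$ to $\phi(x)$, so its fixed set consists precisely of the $\phi$-fixed points.

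The decisive input, which the equivariant pair-of-pants product developed in the body of the paper should deliver, is a localisation isomorphism of the form
\begin{equation*}
\mathit{HF}^*_{\bZ/2}(\phi^2)[u^{-1}] \;\cong\; \mathit{HF}^*(\phi) \otimes_{\bK} \bK[u,u^{-1}].
\end{equation*}
Heuristically, $\phi^2$-fixed points that are not $\phi$-fixed come in $\bZ/2$-swapped pairs and contribute free $\bK[\bZ/2]$-summands with vanishing Tate cohomology, so that after inverting $u$ only $\phi$-fixed points survive; the equivariant pair-of-pants product provides the chain-level model realising this identification. In particular, the free $\bK[u]$-rank of $\mathit{HF}^*_{\bZ/2}(\phi^2)$ equals $\dim \mathit{HF}^*(\phi)$.

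To bound this rank above by $\dim \mathit{HF}^*(\phi^2)^{\bZ/2}$, I would run the Borel spectral sequence $E_2^{p,q} = H^p(\bZ/2; \mathit{HF}^q(\phi^2)) \Rightarrow \mathit{HF}^{p+q}_{\bZ/2}(\phi^2)$. Decomposing $\mathit{HF}^*(\phi^2)$ as a $\bK[\bZ/2]$-module into trivial and regular summands $\bK^{\oplus a} \oplus \bK[\bZ/2]^{\oplus b}$, one sees that the free $\bK[u]$-rank of $E_2$ equals $a$, while $\dim \mathit{HF}^*(\phi^2)^{\bZ/2} = a+b$. Since differentials can only cancel $\bK[u]$-free summands in pairs, the free rank of $E_\infty$---and hence of $\mathit{HF}^*_{\bZ/2}(\phi^2)$---is bounded above by $a \leq a+b$; combining with the localisation yields the corollary.

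The main obstacle is plainly the localisation isomorphism: once it is in place, the rest is standard homological algebra over $\bK[u]$. Establishing the localisation requires a careful $E\bZ/2$-parametrised moduli analysis (transversality and compactness uniformly in the parameter), together with a comparison argument identifying the chain-level equivariant pair-of-pants product with the natural inclusion of $\phi$-fixed contributions after inverting $u$.
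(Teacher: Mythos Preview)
Your proposal is correct and matches the paper's approach: the localisation isomorphism you invoke is exactly Corollary~\ref{th:localized-isomorphism}, obtained by combining the equivariant pair-of-pants isomorphism (Theorem~\ref{th:main}) with the purely algebraic Lemma~\ref{th:kaledin}, and your rank estimate via the Borel spectral sequence is the content of Lemma~\ref{th:u-adic-spectral-sequence} together with inequality~\eqref{eq:smith-2}. The paper works over $\bK[[h]]$ rather than $\bK[u]$ and extracts the rank bound directly from the long exact sequence~\eqref{eq:floer-u-sequence} rather than the full spectral sequence, but these are cosmetic differences.
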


This is not entirely new: under additional topological restrictions (stated below as Assumption \ref{th:stably-trivial}), it has been previously proved by Hendricks \cite{hendricks14}. As in \cite{hendricks14}, the proof involves an equivariant form of Floer cohomology, written as $\mathit{HF}^*_{\mathit{eq}}(\phi^2)$. This is a finitely generated $\bZ/2$-graded module over $\bK[[h]]$, the ring of formal power series in one variable $h$ (the variable has degree $1$). The information encoded in this equivariant theory can be viewed as a refinement of the previously mentioned $\bZ/2$-action. What we obtain is a description of equivariant Floer cohomology after inverting $h$, which means after tensoring with the ring $\bK((h))$ of Laurent series:

\begin{corollary} \label{th:localized-isomorphism}
There is an isomorphism of ungraded $\bK((h))$-modules,
\begin{equation} \label{eq:u-isomorphism}
\mathit{HF}^*(\phi)((h)) = \mathit{HF}^*(\phi) \otimes \bK((h)) \iso \mathit{HF}^*_{\mathit{eq}}(\phi^2) \otimes_{\bK[[h]]} \bK((h)).
\end{equation}
\end{corollary}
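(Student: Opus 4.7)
The plan is to realize \eqref{eq:u-isomorphism} by an explicit chain-level map built from the equivariant pair-of-pants product announced in the paper's title. The starting observation is that the $\bZ/2$-action on the space of $\phi^2$-twisted loops by half-period shift $\gamma(t) \mapsto \gamma(t+\half)$ has as its fixed locus the space of $\phi$-twisted loops. At the generator level this identifies $\mathrm{Fix}(\phi)$ with the $\bZ/2$-fixed generators of $CF^*(\phi^2)$, providing the Floer-theoretic version of the classical Smith fixed-point identification that underlies \eqref{eq:classical-smith} in the introduction.

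The comparison map $\Psi : CF^*(\phi) \to CF^*_{\mathit{eq}}(\phi^2)$ would be constructed from a Borel-type family of pair-of-pants surfaces, parameterized over $S^\infty \to B\bZ/2$, with the $\bZ/2$-action exchanging the two input punctures and both incoming labels identified with a single copy of $x \in CF^*(\phi)$. After checking that $\Psi$ is a chain map and extending $\bK[[h]]$-linearly, it induces a map $\bar\Psi : \mathit{HF}^*(\phi)((h)) \to \mathit{HF}^*_{\mathit{eq}}(\phi^2) \otimes_{\bK[[h]]} \bK((h))$, which is the candidate for the isomorphism in \eqref{eq:u-isomorphism}.

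To prove $\bar\Psi$ is an isomorphism I would use a filtration argument, say by symplectic action. The required local input is twofold: first, that near each $\bZ/2$-fixed generator $x$ of $CF^*(\phi^2)$, the map $\Psi$ has leading term $x$ modulo lower-action or higher $h$-order corrections; second, that the non-fixed generators of $CF^*(\phi^2)$ form free $\bZ/2$-orbits whose contribution to $CF^*_{\mathit{eq}}(\phi^2)$ is $h$-torsion and therefore vanishes after inverting $h$. Together these produce an isomorphism on the associated graded, hence on the whole localized complex, establishing \eqref{eq:u-isomorphism} as an equality of ungraded $\bK((h))$-modules.

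The main obstacle lies in the local computation underpinning $\Psi$: one must analyze the equivariant pair-of-pants moduli spaces near the constant (diagonal) configuration over a fixed point of $\phi$, proving that the rigid equivariant solutions are essentially the constants and that equivariant transversality can be arranged throughout the $S^\infty$-parametrized family. This is, in effect, what the paper must establish in setting up its equivariant pair-of-pants product, and it is where I expect the bulk of the technical work to lie.
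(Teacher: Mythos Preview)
Your proposal has a genuine gap at the first step: the map $\Psi: \mathit{CF}^*(\phi) \to \mathit{CF}^*_{\mathit{eq}}(\phi^2)$ you describe, sending a generator $x$ to the equivariant pair-of-pants count with both inputs equal to $x$ and then extended $\bK$-linearly, is not a chain map. The underlying assignment $x \mapsto x \otimes x$ is nonlinear, and on the chain level the equivariant pair-of-pants product $\wp$ satisfies $d_{\mathit{eq}}\,\wp(x \otimes x) = \wp(dx \otimes x + x \otimes dx)$, which has no reason to equal $\sum_y a_y\, \wp(y \otimes y)$ when $dx = \sum_y a_y y$. So you cannot ``check that $\Psi$ is a chain map'': it isn't one.

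The paper handles this by inserting an intermediate object. First, the equivariant pair-of-pants product is set up as an honest $\bK[[h]]$-linear chain map \eqref{eq:equi-pants} with source $C^*(\bZ/2; \mathit{CF}^*(\phi) \otimes \mathit{CF}^*(\phi))$, not $\mathit{CF}^*(\phi)$; Theorem~\ref{th:main} asserts that this becomes an isomorphism after inverting $h$, and the action-filtration / local-contribution argument you sketch is essentially what goes into that (with the refinement that the local coefficient at a fixed point $x$ is $h^{n-\kappa(D\phi_x)}$, involving the Krein index, rather than simply $1$). Second, the nonlinear squaring $v \mapsto v \otimes v$, which is only defined on cohomology as in \eqref{eq:tate}, becomes linear once $h$ is inverted, and Kaledin's algebraic Lemma~\ref{th:kaledin} shows the resulting map $H^*(V)((h)) \to \hat{H}^*(\bZ/2; V \otimes V)$ is an isomorphism. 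Corollary~\ref{th:localized-isomorphism} is then the composition of these two isomorphisms, as explained around \eqref{eq:tate-floer}. The intermediate group $H^*(\bZ/2; \mathit{CF}^*(\phi) \otimes \mathit{CF}^*(\phi))$ is not a convenience but a necessity: it separates the geometric step, where linearity is available from the outset, from the algebraic step, where linearity only emerges after localization.
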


Corollary \ref{th:floer-smith} follows from this by purely algebraic arguments (the same step appears in \cite{seidel-smith10, hendricks14}, as well as in ordinary equivariant cohomology \cite[Chapter IV.4]{borel60}).

Naively, \eqref{eq:quantum-smith} may not be surprising: if one thinks of Floer cohomology as a measure of fixed points, $\phi^2$ clearly has more of them than $\phi$. In the same intuitive spirit (and with the localization theorem for equivariant cohomology in mind, which we will recall as Theorem \ref{th:localization} below), one can think of tensoring with $\bK((h))$ as throwing away the fixed points of $\phi^2$ which are not fixed points of $\phi$, leading to \eqref{eq:u-isomorphism}. Indeed, in a sense, the proofs ultimately reduce to such very basic considerations. Before one can get to that point, however, a map has to be defined which allows one to compare the two sides of \eqref{eq:u-isomorphism}. It is at this point that our approach diverges from that in \cite{hendricks14}. We construct an equivariant refinement of the pair-of-pants product \cite{schwarz95,salamon99}, which is a homomorphism of $\bZ/2$-graded $\bK[[h]]$-modules,
\begin{equation} \label{eq:equi-pants}
H^*(\bZ/2; \mathit{CF}^*(\phi) \otimes \mathit{CF}^*(\phi)) \longrightarrow \mathit{HF}^*_{\mathit{eq}}(\phi^2).
\end{equation}
Here $\mathit{CF}^*(\phi)$ is the chain complex underlying $\mathit{HF}^*(\phi)$. We take its tensor product with itself (as a chain complex), equip it with the involution that exchanges the two factors, and consider the associated group cohomology $H^*(\bZ/2; \mathit{CF}^*(\phi) \otimes \mathit{CF}^*(\phi))$. We will see, as part of the elementary formalism of group cohomology, that this depends only on $\mathit{HF}^*(\phi)$. Our main theorem is:

\begin{theorem} \label{th:main}
The equivariant pair-of-pants product \eqref{eq:equi-pants} becomes an isomorphism after tensoring with $\bK((h))$ on both sides.
\end{theorem}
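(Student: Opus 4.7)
The plan is to prove the theorem by combining a chain-level equivariant construction of \eqref{eq:equi-pants} with the localization theorem (Theorem \ref{th:localization}), reducing the claim to a local statement near the fixed points of $\phi$. The first step is to lift \eqref{eq:equi-pants} to a $\bZ/2$-equivariant chain map
\[ (\mathit{CF}^*(\phi) \otimes \mathit{CF}^*(\phi)) \otimes W \longrightarrow \mathit{CF}^*_{\mathit{eq}}(\phi^2), \]
where $W$ is a free resolution of $\bK$ as a $\bK[\bZ/2]$-module (so that Hom into $\mathit{CF}^*(\phi)^{\otimes 2}$ computes $H^*(\bZ/2;-)$, and the equivariant Floer complex is obtained by a matching Borel-type construction). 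Geometrically this amounts to counting families of pair-of-pants solutions parametrized by a model of $E\bZ/2$, with the equivariance reflecting the symmetry that swaps the two input legs of the pants and simultaneously implements the cyclic symmetry $\phi$ on the output cylinder.

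The second step is to invert $h$ on both sides and apply the localization theorem to the right-hand side. This identifies $\mathit{HF}^*_{\mathit{eq}}(\phi^2) \otimes_{\bK[[h]]} \bK((h))$ with the equivariant contribution coming from the $\bZ/2$-fixed part of $\mathit{Fix}(\phi^2)$, which is precisely $\mathit{Fix}(\phi)$; a Morse--Bott analysis in the normal directions rewrites this as $\mathit{HF}^*(\phi) \otimes \bK((h))$. On the left-hand side, a purely algebraic Tate-cohomology computation over $\bK = \bF_2$ shows that
\[ H^*(\bZ/2; \mathit{CF}^*(\phi) \otimes \mathit{CF}^*(\phi)) \otimes_{\bK[[h]]} \bK((h)) \iso \mathit{HF}^*(\phi) \otimes \bK((h)), \]
with the isomorphism induced by the characteristic-$2$ Frobenius $x \mapsto x \otimes x$ (well-defined on cohomology). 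This is the ``elementary formalism of group cohomology'' alluded to in the introduction.

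The remaining task is to check that the equivariant pair-of-pants product realizes this isomorphism. The statement is local near each $x \in \mathit{Fix}(\phi)$: the pair of pants has a trivial constant solution at $x$, and one has to verify that, up to $h$-torsion, its contribution to the equivariant count is exactly the identity $x \otimes x \mapsto x$, while all other contributions come from points of $\mathit{Fix}(\phi^2) \setminus \mathit{Fix}(\phi)$ (which appear in free $\bZ/2$-orbits and are therefore killed by localization). I would implement this by a neck-stretching degeneration along a $\bZ/2$-invariant separating circle on the pair of pants, whose limit breaks the problem into an equivariant ``half pants'' configuration whose moduli spaces factor through the diagonal. Standard gluing then transports the answer back to the original pair of pants.

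The main obstacle is the compatibility of this degeneration with the equivariant framework: one has to set up coherent $\bZ/2$-invariant families of almost complex structures and Hamiltonian perturbations which are transverse across the neck-stretching parameter, control the compactification of the parametrized moduli spaces at the broken stratum, and match the resulting chain-level identification with the Borel/Tate model used for $H^*(\bZ/2;-)$. This is the analytic heart of the argument; once it is carried out, the identification at the localized level becomes essentially formal, and Corollaries \ref{th:localized-isomorphism} and \ref{th:floer-smith} follow as indicated in the introduction.
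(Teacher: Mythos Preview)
Your second step contains a circularity. Theorem~\ref{th:localization} is a statement about equivariant cohomology of a compact manifold with a $\bZ/2$-action; it does not apply to $\mathit{HF}^*_{\mathit{eq}}(\phi^2)$, which is not defined as the equivariant cohomology of any such space. The identification $\mathit{HF}^*_{\mathit{eq}}(\phi^2) \otimes_{\bK[[h]]} \bK((h)) \iso \mathit{HF}^*(\phi)((h))$ that you invoke is exactly Corollary~\ref{th:localized-isomorphism}, which is \emph{derived from} Theorem~\ref{th:main}, not an input to its proof. The closest thing available is the action filtration spectral sequence of Addendum~\ref{th:low-energy-differential}, whose $E_1$ page is $\mathit{CF}^*(\phi)((h))$; but the paper explicitly notes that a priori there is no relation between the higher differentials and the Floer differential on $\mathit{CF}^*(\phi)$, so this does not compute the localized equivariant group.

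The paper's proof avoids this by never computing either side independently. Instead, it equips both the domain and target of the chain map $\wp$ with action filtrations and compares the induced map on $E_1$ pages directly. On each side the $E_1$ page has a basis indexed by $\mathit{Fix}(\phi)$ (via Example~\ref{th:tate-0} on the right, Lemma~\ref{th:kaledin} on the left), and the map takes $x \otimes x \mapsto c_x \cdot x$ where $c_x \in \bK[[h]]$ is a local invariant of $x$. The heart of the argument is Proposition~\ref{th:key}, showing $c_x = h^{n-\kappa(D\phi_x)}$ is always nonzero. This is not as easy as your proposal suggests: the constant pair-of-pants at $x$ has linearized operator of index $\kappa(D\phi_x)-n$ (Lemma~\ref{th:index-theory}), typically negative, so it is \emph{not} a regular point of the ordinary pair-of-pants moduli space and only contributes via the higher equivariant parameters. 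The paper establishes $c_x \neq 0$ not by neck-stretching but by first handling the case $\kappa = n$ directly (Lemma~\ref{th:cheese}), then using the chain-map identity for $\wp$ in explicit Morse-theoretic local models to propagate this to all other $(s,k)$ (Lemmas~\ref{th:constant-1}--\ref{th:constant-3}).
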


Corollary \ref{th:localized-isomorphism} is a purely algebraic consequence of this statement. Note that in principle, the map \eqref{eq:equi-pants} contains additional information, which is lost when taking the tensor product with $\bK((h))$. 

\begin{addendum} \label{th:warning}
The construction of $\mathit{HF}^*(\phi)$ assumes nondegeneracy of fixed points, and involves additional choices of almost complex structures. Ultimately, one uses continuation maps \cite{salamon-zehnder92} to show that Floer cohomology is independent of those choices up to canonical isomorphism, and also to extend the definition to the degenerate case. 

Similarly, the construction of $\mathit{HF}^*_{\mathit{eq}}(\phi^2)$ and of \eqref{eq:equi-pants} requires nondegeneracy of the fixed points of $\phi^2$, and involves further auxiliary choices (of almost complex structures and, in the case of the product, Hamiltonian functions which serve as inhomogeneous terms for the $\bar\partial$-equations). Even though this should not affect the outcome, in the same sense as before, we will not prove that statement here. 

Now, the proof of Theorem \ref{th:main} makes some specific requirements: in addition to the nondegeneracy of fixed points of $\phi^2$, there is an additional condition on the action functional (see Setup \ref{th:setup-4}; this can be achieved by a small perturbation). One then needs to choose the auxiliary data (specifically, the inhomogeneous terms) that define the equivariant pair-of-pants product to be sufficiently small. The precise statement should therefore be that, for this particular class of $\phi$, one can define \eqref{eq:equi-pants} in such a way that it becomes an isomorphism after tensoring with $\bK((h))$. The same applies to Corollary \ref{th:localized-isomorphism}. However, Corollary \ref{th:floer-smith} does not require any such additional language (because the statement only concerns ordinary Floer cohomology groups).
\end{addendum}

The structure of the paper is as follows. Section \ref{sec:context}, a kind of extended introduction, provides background and context for our constructions. In particular, it describes the algebraic arguments that tie together the statements made above; explains the motivation from classical equivariant cohomology; and discusses some applications. Section \ref{sec:morse} constructs certain auxiliary Morse-theoretic moduli spaces. Using those plus rather standard Floer-theoretic machinery, we construct equivariant Floer cohomology and \eqref{eq:equi-pants}, in Section \ref{sec:con}. Section \ref{sec:linear} contains further background material, this time from symplectic linear algebra. This is used in Section \ref{sec:local} to prove Theorem \ref{th:main}. Finally, Section \ref{sec:monotone} takes a brief look at some of the new phenomena that one can expect if the exactness assumptions are dropped. 

{\em Acknowledgments.} I am indebted to Kristen Hendricks, Graeme Segal, David Treumann and Jingyu Zhao for helpful explanations. This work was partially supported by NSF through grant DMS-1005288; by the Simons Foundation, through a Simons Investigator grant; and by a Fellowship at the Radcliffe Institute for Advanced Study. I would also like to thank the IBS Center for Geometry and Physics (Pohang), where part of the paper was written, for its hospitality. 
\newpage
\section{Context\label{sec:context}}

Since the constructions in this paper are modelled on ones in equivariant cohomology, we include a review of that theory (specialized to the group $\bZ/2$), emphasizing its algebraic aspects. After that, we outline the structure of the Floer-theoretic analogue, and in particular, explain how one goes from Theorem \ref{th:main} to Corollaries \ref{th:floer-smith} and \ref{th:localized-isomorphism}. We will then discuss some sample applications. Finally, returning to the general picture, we consider how our approach to relating the Floer cohomology of $\phi$ and $\phi^2$ compares to that in \cite{hendricks14}, as well as to the purely algebraic theory in \cite{lipshitz-treumann12}. Surprisingly, the attempt to combine the picture here with that in \cite{hendricks14} naturally seems to involve another theory, namely, the Floer homotopy type proposed in \cite{cohen-jones-segal95}.

\subsection{Algebra background}
Let $V$ be a vector space over $\bK = \bF_2$, with a linear action of the group $\bZ/2$, or in other words, an involution $\iota: V \rightarrow V$. The associated group cochain complex is 
\begin{equation} \label{eq:c-complex}
C^*(\bZ/2;V) = V[[h]], \quad
d_C = h(\mathit{id} + \iota),
\end{equation}
where $h$ is a formal variable of degree $1$. Its cohomology, called group cohomology with coefficients in $V$ and denoted by $H^*(\bZ/2;V)$, is a $\bZ$-graded module over $\bK[[h]]$. There is also a version where one inverts $h$, whose cohomology is called Tate cohomology:
\begin{align}
& \hat{C}^*(\bZ/2;V) = C^*(\bZ/2;V) \otimes_{\bK[[h]]} \bK((h)) = V((h)), \\
& \hat{H}^*(\bZ/2;V) = H^*(\hat{C}^*(\bZ/2;V)) \iso H^*(\bZ/2;V) \otimes_{\bK[[h]]} \bK((h)). 
\label{eq:tate-relation}
\end{align}
Both versions are functorial in $V$ (under $\bZ/2$-equivariant linear maps).

\begin{example} \label{th:tate-0}
Let $V$ be a vector space with $\bZ/2$-action, which is equivariantly isomorphic to a direct sum of copies of the standard representation $\bK[\bZ/2]$. In simpler terms, this means that $V$ has a basis freely acted on by $\bZ/2$. Direct computation shows that then, $\hat{H}^*(\bZ/2;V) = 0$.
\end{example}

\begin{remark} \label{th:history-lesson}
Group cohomology, which applies to representations of arbitrary groups, was defined in \cite{eilenberg-maclane43}. The Tate version, for finite groups, was introduced in \cite{tate}. However, the general relation between the two theories takes on a more complicated form than \eqref{eq:tate-relation}. Example \ref{th:tate-0} is a special case of the vanishing of Tate cohomology with coefficients in a free module (see e.g.\ \cite[p.~136]{brown}). 
\end{remark}

The definitions made above generalize to the situation where $V$ is a ($\bZ$-graded or $\bZ/2$-graded) chain complex of vector spaces acted on by $\bZ/2$, in which case the differential on $C^*(\bZ/2;V)$ becomes $d_C = d_V + h(\mathit{id}+\iota)$. Its cohomology $H^*(\bZ/2;V)$ is again a ($\bZ$-graded or $\bZ/2$-graded) $\bK[[h]]$-module. We summarize some of its basic properties:

\begin{lemma} \label{th:3-lemma}
(i) If $H^*(V) = 0$, then $H^*(\bZ/2;V) = 0$.

(ii) If $H^*(V)$ is of finite (total) dimension, then $H^*(\bZ/2;V)$ is a finitely generated $\bK[[h]]$-module.

(iii) Suppose that $V_1$ and $V_2$ are chain complexes with $\bZ/2$-actions, and that we have a chain map $V_1 \rightarrow V_2$ which is $\bZ/2$-equivariant, and which induces an isomorphism $H^*(V_1) \rightarrow H^*(V_2)$. Then the associated map $H^*(\bZ/2;V_1) \rightarrow H^*(\bZ/2;V_2)$ is also an isomorphism.

(iv) Suppose that we have three chain complexes with $\bZ/2$-actions, and equivariant chain maps between them, which form a short exact sequence
\begin{equation}
0 \rightarrow V_1 \longrightarrow V_2 \longrightarrow V_3 \rightarrow 0.
\end{equation}
Then, the associated maps on group cohomology fit into a long exact sequence
\begin{equation} \label{eq:equivariant-les-0}
\cdots \rightarrow H^*(\bZ/2;V_1) \longrightarrow H^*(\bZ/2;V_2) \longrightarrow H^*(\bZ/2;V_3) \longrightarrow H^{*+1}(\bZ/2;V_1) \rightarrow \cdots
\end{equation}
\end{lemma}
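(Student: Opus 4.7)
I would handle the four parts in the order (iv), (i), (iii), (ii), so each can use the previous.

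For (iv), the functor $V \mapsto V[[h]]$, equipped with the twisted differential $d_V + h(\mathit{id}+\iota)$, is manifestly exact on equivariant chain complexes. Applied to the given short exact sequence it produces a short exact sequence of group-cochain complexes, and \eqref{eq:equivariant-les-0} is then the standard long exact sequence associated to it.

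For (i), I would construct a primitive by recursion in powers of $h$. Writing $v = \sum_{k \geq 0} v_k h^k$, the cocycle condition $d_C v = 0$ amounts to $d_V v_0 = 0$ and $d_V v_k = (\mathit{id}+\iota) v_{k-1}$ for $k \geq 1$. I choose $w_0$ with $d_V w_0 = v_0$ (possible by acyclicity of $V$), and then inductively produce $w_k$ solving $d_V w_k = v_k + (\mathit{id}+\iota) w_{k-1}$; the right-hand side is $d_V$-closed by the preceding relations together with the identity $(\mathit{id}+\iota)^2 = 0$, which holds precisely because we are working in characteristic $2$. Part (iii) now follows formally: the equivariant mapping cone of $f: V_1 \to V_2$ inherits a $\bZ/2$-action and is acyclic whenever $f$ is a quasi-isomorphism, and the short exact sequence $0 \to V_2 \to \mathrm{Cone}(f) \to V_1[1] \to 0$ combined with (iv) and (i) forces the connecting map---which agrees up to sign with the map induced by $f$---to be an isomorphism.

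Part (ii) is the delicate one. Set $M = H^*(\bZ/2; V)$. The short exact sequence $0 \to hV[[h]] \to V[[h]] \to V \to 0$, in which the subcomplex is identified with a degree shift of $V[[h]]$ via multiplication by $h$, produces via (iv) the long exact sequence $\cdots \to M^{k-1} \xrightarrow{h} M^k \to H^k(V) \to M^k \xrightarrow{h} M^{k+1} \to \cdots$, from which $M/hM$ embeds into $H^*(V)$ and is therefore finite-dimensional. To upgrade this to finite generation over the Noetherian ring $\bK[[h]]$, I would use the $h$-adic spectral sequence $E_1^{p,q} = H^q(V) \cdot h^p \Rightarrow H^{p+q}(\bZ/2; V)$: its $E_2$ page is the group cohomology $H^*(\bZ/2; H^*(V))$ of the finite-dimensional $\bZ/2$-representation $H^*(V)$, which is itself a finitely generated $\bK[[h]]$-module, and every subsequent page is a subquotient, so the same holds for $E_\infty$. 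A graded (or complete-local) Nakayama argument, using the $h$-adic completeness of $V[[h]]$, then transfers finite generation from the associated graded to $M$.

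The main obstacle I anticipate is this last step of (ii): since $V$ need not be bounded in individual cohomological degrees, one must verify strong convergence of the spectral sequence and check separatedness and completeness of the induced $h$-adic filtration on $M$ before Nakayama can be applied cleanly. The remaining three parts reduce to essentially termwise manipulations and the characteristic-$2$ identity $(\mathit{id}+\iota)^2 = 0$.
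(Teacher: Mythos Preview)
Your treatments of (i), (iii), and (iv) are correct. Parts (i) and (iv) match the paper essentially verbatim; your mapping-cone reduction for (iii) is a clean alternative to the order-by-order argument the paper alludes to but omits.

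The real divergence is in (ii), and the worry you flag at the end is exactly the point. The paper does \emph{not} go through a spectral sequence or Nakayama. Instead it runs the same order-by-order construction as in (i): choose cocycles $u_1,\dots,u_r \in C^*(\bZ/2;V)$ whose reductions mod $h$ represent classes spanning the image of $H^*(\bZ/2;V) \to H^*(V)$. Given an arbitrary cocycle $v = v^0 + O(h)$, solve $v^0 = \sum_k \gamma_k^0 u_k^0 + d_V w^0$ with $\gamma_k^0 \in \bK$; then $v - \sum_k \gamma_k^0 u_k - d_C w^0$ is $h$ times another cocycle, and one iterates. The upshot is $\gamma_k \in \bK[[h]]$ and $w \in V[[h]]$ with $v = \sum_k \gamma_k u_k + d_C w$, so the $[u_k]$ generate. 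This argument lives entirely in the $h$-adically complete complex $V[[h]]$ and never needs to know that the cohomology $M = H^*(\bZ/2;V)$ itself is $h$-complete or separated---which is precisely the obstruction in your route, and which does not follow formally from completeness of the underlying complex. (For instance, a module like $\bK[[h]] \oplus \bK((h))/\bK[[h]]$ has one-dimensional $M/hM$ but is not finitely generated, so the Nakayama step genuinely needs extra input.) Your approach can likely be completed, but the paper's hands-on construction is shorter and sidesteps the issue entirely.
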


\begin{proof}
(i) Take a cocycle $v \in C^*(\bZ/2;V) = V[[h]]$, and write it as $v = v^0 + O(h)$, where $v^0 \in V$ (the notation $O(h)$ means a multiple of $h$, or in other words, an element of $h V[[h]]$). Then, $d_V v^0 = 0$. By assumption, there is a $w^0 \in V$ such that $d_V w^0 = v^0$. One can therefore write $v - d_Cw^0 = hv^1 + O(h^2)$ for some $v^1 \in V$, and then repeat the previous argument to find a $w^1 \in V$ such that $v - d_C(w^0 + hw^1) = O(h^2)$. This iteratively constructs $w = w^0 + h w^1 + \cdots \in V[[h]]$ which satisfies  $d_Cw = v$. 

(ii) The quotient map $C^*(\bZ/2;V) = V[[h]] \rightarrow V[[h]]/hV[[h]] = V$ induces a map
\begin{equation} \label{eq:set-h-to-zero}
H^*(\bZ/2;V) \longrightarrow H^*(V).
\end{equation}
Take cocycles $u_1,\dots,u_r \in C^*(\bZ/2;V)$ whose images in $V$ yield cohomology classes which span the image of \eqref{eq:set-h-to-zero}. Write them as $u_k = u_k^0 + O(h)$. Given any cocycle $v \in C^*(\bZ/2;V)$, write it as $v = v^0 + O(h)$ as well. By assumption, one can find $\gamma_1^0,\dots,\gamma_r^0 \in \bK$ and a $w^0 \in V$ such that $v^0 = \gamma_1^0 u_1^0 + \cdots + \gamma_r^0 u_r^0 + d_V w^0$. One can therefore write 
\begin{equation} \label{eq:s}
v - \gamma_1^0 u_1 - \cdots - \gamma_r^0 u_r - d_C w^0 = hv^1 + O(h^2)
\end{equation}
for some $v^1 \in V$. The expression on either side of \eqref{eq:s} is $h$ times some cocycle in $C^*(\bZ/2;V)$. We can apply the same argument to that cocycle, and then proceed iteratively, which constructs $\gamma_1,\dots,\gamma_r \in \bK[[h]]$ and a $w \in C^*(\bZ/2;V)$ such that $v = \gamma_1 u_1 + \cdots + \gamma_r u_r + d_Cw$.

(iii) can be proved by a similar order-by-order argument, whose details we omit. 

(iv) is obvious, since the complexes $C^*(\bZ/2;V_k)$ themselves form a short exact sequence (inspection of the standard argument shows that the boundary operator is a $\bK[[h]]$-linear map).
\end{proof}

\begin{remark} \label{th:spectral-sequence}
The acyclicity result (i) is an instance of a much more general principle. Namely, take any ($\bZ$-graded or $\bZ/2$-graded) chain complex of vector spaces $(V,d_V)$. Suppose that on $V[[h]]$, we have a $\bK[[h]]$-linear differential of the form $dv = d_Vv + O(h)$. Then, if $(V,d_V)$ is acyclic, the same holds for $(V[[h]],d)$. The proof is the same as in the previously considered special case. Alternatively, one can think in terms of spectral sequences: $(V[[h]],d)$ carries a complete decreasing filtration (by powers of $h$), and the differential on the associated graded space is given by $d_V$ (at each level of the filtration). Under our assumption, the $E_1$ page of the spectral sequence is zero, which implies the acyclicity of $(V[[h]],d)$.

There is a similar generalization of (ii). Abstractly, one should be able think of it as a vanishing result parallel to (i), by working modulo the Serre subcategory of finitely generated $\bK[[h]]$-modules \cite{serre} (but we have not checked the details of this approach; in any case, the proof we have given also works in this more general context).

A similar observation applies to part (iii). Take chain complexes $V_k$ ($k = 1,2$; with no group actions). Suppose that we have differentials $d_k = d_{V_k} + O(h)$ on $V_k[[h]]$. Consider a $\bK[[h]]$-linear chain map $V_1[[h]] \rightarrow V_2[[h]]$. Then, if the $h = 0$ reduction of our map is a quasi-isomorphism $V_1 \rightarrow V_2$, the original map is also a quasi-isomorphism. Abstractly, one can think of this as an application of the spectral sequence comparison theorem (see e.g.\  \cite[Theorem 5.5.11]{weibel}, and note that convergence of the spectral sequence is not necessary for this).
\end{remark}

\begin{remark} \label{th:colimit}
It may also be useful to note one property that group cohomology does not have. Namely, it is not compatible with direct limits. One could cure that deficiency by replacing $V[[h]]$ with $V \otimes \bK[[h]]$ in the definition (recall that $V[[h]]$ is the space of power series with coefficients in $V$, while $V \otimes \bK[[h]]$ is the subspace of those series whose coefficients span a finite-dimensional subspace of $V$). This yields a different theory, but one which no longer satisfies properties (i)--(iii) above (of course, the two theories agree if $V$ is finite-dimensional).
\end{remark}

The Tate version $\hat{H}^*(\bZ/2;V)$ generalizes to the case when $V$ is a chain complex in the same way, and is related to $H^*(\bZ/2;V)$ as in \eqref{eq:tate-relation}. As a consequence, all the properties in Lemma \ref{th:3-lemma} have counterparts for the Tate version. 

\begin{example} \label{th:tate-acyclic}
Let $V$ be a $\bZ$-graded and bounded chain complex with $\bZ/2$-action, such that each $V^i$ has a basis on which $\bZ/2$ acts freely. By truncating it at a fixed degree $j$, one forms a short exact sequence (of complexes with $\bZ/2$-actions)
\begin{equation} \label{eq:trunc}
0 \rightarrow V^{\geq j} \longrightarrow V \longrightarrow V^{\leq j-1} \rightarrow 0.
\end{equation}
Define the ``length'' of $V$ to be the difference between the top and bottom nonzero degrees, plus one. If $V$ has length $>1$, one can arrange that both truncations in \eqref{eq:trunc} have less length. Arguing by induction on length (using the long exact sequence associated to \eqref{eq:trunc}, and Example \ref{th:tate-0} as the base case), one shows that the Tate cohomology of $V$ vanishes.
\end{example}

\begin{remark}
With the generalization to chain complexes, we have moved beyond the first historical framework for group cohomology (as in Remark \ref{th:history-lesson}) to a more abstract viewpoint, where group cohomology is defined as a morphism space in an appropriate derived category (this also works for the Tate version, see e.g.\ \cite{rickard}).
\end{remark}

There is a short exact sequence of complexes
\begin{equation}
0 \rightarrow C^{*-1}(\bZ/2;V) \stackrel{h}{\longrightarrow} C^*(\bZ/2;V) \longrightarrow V \rightarrow 0,
\end{equation}
which induces a long exact sequence 
\begin{equation} \label{eq:u-sequence}
\cdots \rightarrow H^{*-1}(\bZ/2;V) \stackrel{h}{\longrightarrow} H^*(\bZ/2;V) \longrightarrow H^*(V) \rightarrow \cdots
\end{equation}
This sequence includes the map \eqref{eq:set-h-to-zero}. Note that this map lands in the $\bZ/2$-invariant part of $H^*(V)$. Hence
\begin{equation} \label{eq:smith-0}
\mathrm{dim}\, H^*(V)^{\bZ/2} \geq \mathrm{dim}\, H^*(\bZ/2; V)/h H^*(\bZ/2;V).
\end{equation}
If $H^*(V)$ is finite-dimensional, $H^*(\bZ/2;V)$ is a finitely generated $\bK[[h]]$-module by Lemma \ref{th:3-lemma}(ii), and $H^*(\bZ/2;V)/h H^*(\bZ/2;V)$ is the space of generators (the resulting version of \eqref{eq:smith-0} was already implicit in our proof of finite generation). As a (weaker) consequence, we find that in this case,
\begin{equation} \label{eq:smith-1}
\mathrm{dim} \, H^*(V)^{\bZ/2} \geq
\mathrm{rank}_{\bK[[h]]} \, H^*(\bZ/2;V) = 
\mathrm{dim}_{\bK((h))} \, \hat{H}^*(\bZ/2;V).
\end{equation}

Given an arbitrary chain complex $V$ (with no given group action), one can equip $V \otimes V$ with the involution which exchanges the two factors, and consider the associated equivariant cohomology $H^*(\bZ/2; V \otimes V)$. Since $V$ is quasi-isomorphic to $H^*(V)$ (in a way that is unique up to chain homotopy), $V \otimes V$ is equivariantly quasi-isomorphic to $H^*(V) \otimes H^*(V)$ (in a way which which is unique up to equivariant chain homotopy). Hence, we have a canonical isomorphism
\begin{equation} \label{eq:quasi-square}
H^*(\bZ/2; V \otimes V) \iso H^*(\bZ/2; H^*(V) \otimes H^*(V)).
\end{equation}

There is also a canonical (but nonlinear in general) degree-doubling map
\begin{equation} \label{eq:tate}
H^*(V) \longrightarrow H^{2*}(\bZ/2;V \otimes V).
\end{equation}
On cocycles, this is given by $v \mapsto v \otimes v$. Well-definedness on the cohomology level is established by observing that
\begin{equation}
(v + d_V w) \otimes (v + d_V w) - v \otimes v = d_C\big(v \otimes w + w \otimes v + w \otimes d_V w + h(w \otimes w) \big).
\end{equation}
Even though \eqref{eq:tate} is not linear, it becomes linear after multiplying by $h$, since for cocycles $v_1,v_2$ one has
\begin{equation}
h\big( (v_2+v_1) \otimes (v_2+v_1) - v_1 \otimes v_1 - v_2 \otimes v_2 \big) = d_C(v_1 \otimes v_2).
\end{equation}
Let's take \eqref{eq:tate} and compose it with the map from equivariant cohomology to the Tate version. This yields a degree-doubling map
\begin{equation} \label{eq:hat-tate}
H^*(V) \longrightarrow \hat{H}^{2*}(\bZ/2;V \otimes V).
\end{equation}
We know that this becomes linear after multiplying by $h$, but since $h$ acts invertibly on Tate cohomology, it follows that \eqref{eq:hat-tate} is itself linear. One can extend it uniquely to a $\bK((h))$-module homomorphism
\begin{equation} \label{eq:tate-isomorphism}
H^*(V)((h)) \longrightarrow \hat{H}^*(\bZ/2;V \otimes V)
\end{equation}
(we have omitted the $2$ in the superscript, since \eqref{eq:tate-isomorphism} is no longer degree-doubling for the standard choice of grading on $H^*(V)((h))$; it is best thought of as a map of ungraded $\bK((h))$-modules).

\begin{lemma}[\protect{\cite[Lemma 2.3]{kaledin09}}] \label{th:kaledin}
The map \eqref{eq:tate-isomorphism} is an isomorphism of $\bK((h))$-modules.
\end{lemma}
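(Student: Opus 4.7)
The plan is to reduce to the case where $V$ has zero differential, in which case $V \otimes V$ admits an explicit $\bK[\bZ/2]$-module decomposition that can be analyzed directly.

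First I would verify that \eqref{eq:tate-isomorphism} is natural under quasi-isomorphisms. If $f\colon V_1 \to V_2$ is any chain map of complexes (with no given $\bZ/2$-action on either side), then $f \otimes f$ is automatically $\bZ/2$-equivariant for the swap involutions, and on any cocycle $v \in V_1$ one has $(f \otimes f)(v \otimes v) = f(v) \otimes f(v)$. Hence the square of cohomological squaring maps \eqref{eq:hat-tate} commutes, and the Tate counterpart of Lemma \ref{th:3-lemma}(iii) shows that when $f$ is a quasi-isomorphism, the induced vertical arrows on Tate cohomology are isomorphisms. Because $\bK$ is a field, a cocycle-level splitting provides a quasi-isomorphism $H^*(V) \to V$ (with $H^*(V)$ regarded as a complex with zero differential), so it suffices to prove the lemma when $d_V = 0$.

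Next, assume $d_V = 0$ and fix a basis $\{e_i\}$ of $V$ together with a total order on the index set. The swap involution decomposes $V \otimes V$ as a $\bK[\bZ/2]$-module into
\begin{equation}
V \otimes V = V_\Delta \oplus V_{\mathit{off}},
\end{equation}
where $V_\Delta = \mathrm{span}\{e_i \otimes e_i\}$ carries the trivial action and $V_{\mathit{off}} = \mathrm{span}\{e_i \otimes e_j : i \neq j\}$ is a free $\bK[\bZ/2]$-module with basis $\{e_i \otimes e_j : i < j\}$. Example \ref{th:tate-0} gives $\hat{H}^*(\bZ/2; V_{\mathit{off}}) = 0$, so the inclusion of $V_\Delta$ is a Tate quasi-isomorphism. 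On $V_\Delta$ the differential $d_C = h(\mathit{id} + \iota)$ vanishes identically, since $\iota$ acts as the identity and $1 + 1 = 0$ in $\bK$; therefore $\hat{H}^*(\bZ/2; V_\Delta) = V_\Delta((h))$. Tracing \eqref{eq:tate-isomorphism} through these identifications, it sends $e_i$ to $e_i \otimes e_i$, i.e., to the corresponding generator of $V_\Delta$. Its unique $\bK((h))$-linear extension is then manifestly the isomorphism $V((h)) \iso V_\Delta((h))$ induced by $e_i \mapsto e_i \otimes e_i$.

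The delicate point is the naturality step: the squaring map $v \mapsto v \otimes v$ is not linear at the chain level, so the comparison with a quasi-isomorphism $f$ cannot be phrased as an equality of $\bK[[h]]$-linear chain maps but only as an equality of the cohomological, $\bK((h))$-linearized maps. The linearization, already carried out in the text via the $h$-trick preceding the lemma, is precisely what makes this naturality—and hence the reduction to $d_V = 0$—legitimate. Once this is in place, the basis-level computation of the second paragraph finishes the argument.
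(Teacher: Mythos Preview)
The paper does not give its own proof of this lemma; it is stated with a citation to \cite[Lemma 2.3]{kaledin09} and then used as a black box. So there is nothing in the paper to compare your argument against directly.

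That said, your proof is correct and is essentially the standard one. The reduction to $d_V = 0$ is already anticipated in the paper at \eqref{eq:quasi-square}, and your explicit splitting $V \otimes V = V_\Delta \oplus V_{\mathit{off}}$ together with Example~\ref{th:tate-0} is the natural way to finish. Your closing remark about the ``delicate point'' is well-placed: the naturality square you need commutes because the cocycle-level identity $(f\otimes f)(v\otimes v) = f(v)\otimes f(v)$ holds on the nose, so after passing to cohomology and $\bK((h))$-linearizing there is nothing further to check. One small point worth making explicit is that $f\otimes f$ is a quasi-isomorphism (needed to invoke the Tate analogue of Lemma~\ref{th:3-lemma}(iii)); this follows from the K\"unneth theorem over the field $\bK$.
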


\subsection{Topology background}
Let $M$ be a smooth compact manifold (possibly with boundary) with a $\bZ/2$-action. The equivariant cohomology $H^*_{\bZ/2}(M)$ is most commonly defined through the Borel construction \cite{borel60}, but there is also an equivalent algebraic version (see e.g.\ \cite[Section VII.7]{brown}), which suits our discussion better. Namely, let $C^*(M)$ be the singular cochain complex with $\bK$-coefficients, which carries an induced action of $\bZ/2$. The equivariant cochain complex is $C^*_{\bZ/2}(M) = C^*(\bZ/2;C^*(M))$, and the equivariant cohomology is correspondingly $H^*_{\bZ/2}(M) = H^*(\bZ/2; C^*(M))$. There is also a parallel Tate version $\hat{H}^*_{\bZ/2}(M) = \hat{H}^*(\bZ/2; C^*(M))$ (see e.g.\ \cite[Section VII.10]{brown}).

Let $M^{\bZ/2} \subset M$ be the fixed point set of the $\bZ/2$-action. Since the action is trivial when restricted to it, we have $H^*_{\bZ/2}(M^{\bZ/2}) = H^*(M^{\bZ/2})[[h]]$. The standard restriction map on cocycles, $C^*(M) \rightarrow C^*(M^{\bZ/2})$, is clearly equivariant, hence induces a restriction map on equivariant cohomology, which is a homomorphism of graded $\bK[[h]]$-modules
\begin{equation} \label{eq:fixed-point-restriction}
H^*_{\bZ/2}(M) \longrightarrow H^*(M^{\bZ/2})[[h]].
\end{equation}

\begin{theorem}[Localization theorem \protect{\cite[Chapter IV, Proposition 3.6]{borel60}}]
\label{th:localization}
The map \eqref{eq:fixed-point-restriction} becomes an isomorphism after tensoring with $\bK((h))$. In other words, restriction to the fixed point set induces an isomorphism on the Tate version of equivariant cohomology.
\end{theorem}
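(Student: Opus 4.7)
The plan is to prove the equivalent statement that restriction induces an isomorphism on Tate cohomology $\hat{H}^*_{\bZ/2}(M) \to \hat{H}^*_{\bZ/2}(M^{\bZ/2})$, by showing that the Tate cohomology of the relative cochain complex vanishes and invoking the Tate analogue of Lemma \ref{th:3-lemma}(iv). The key input will be Example \ref{th:tate-acyclic}.

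First, I would choose a $\bZ/2$-equivariant smooth triangulation of $M$ having $M^{\bZ/2}$ as a subcomplex; such triangulations exist for compact smooth manifolds with finite group actions. Working with cellular cochains, this yields a short exact sequence of $\bZ/2$-complexes
\begin{equation}
0 \to C^*_{\mathrm{cell}}(M, M^{\bZ/2}) \to C^*_{\mathrm{cell}}(M) \to C^*_{\mathrm{cell}}(M^{\bZ/2}) \to 0.
\end{equation}
The crucial observation is that any cell of $M$ not contained in $M^{\bZ/2}$ has its interior moved off itself by the involution (the fixed set of the involution, being a smooth submanifold, cannot contain the open interior of a cell without containing the whole cell). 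Hence the cells outside the fixed subcomplex are permuted freely in pairs by $\bZ/2$, so each $C^i_{\mathrm{cell}}(M, M^{\bZ/2})$ has a basis freely acted on by $\bZ/2$. Since the triangulation is finite, this relative complex is bounded, and Example \ref{th:tate-acyclic} gives $\hat{H}^*(\bZ/2; C^*_{\mathrm{cell}}(M, M^{\bZ/2})) = 0$.

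Second, the Tate version of the long exact sequence from Lemma \ref{th:3-lemma}(iv) then forces the restriction map $\hat{H}^*(\bZ/2; C^*_{\mathrm{cell}}(M)) \to \hat{H}^*(\bZ/2; C^*_{\mathrm{cell}}(M^{\bZ/2}))$ to be an isomorphism. On the fixed set the involution acts trivially, so $\iota = \mathit{id}$ and (working in characteristic two) the equivariant differential $d_V + h(\mathit{id}+\iota)$ reduces to $d_V$; hence $\hat{H}^*(\bZ/2; C^*_{\mathrm{cell}}(M^{\bZ/2})) = H^*(M^{\bZ/2})((h))$, which is what the right-hand side of \eqref{eq:fixed-point-restriction} becomes after tensoring with $\bK((h))$.

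Finally, to match the picture in the statement (which uses singular cochains) with the cellular one used above, I would compare via the standard cellular-to-singular chain map, which can be taken to be $\bZ/2$-equivariant by naturality, and apply the Tate analogue of Lemma \ref{th:3-lemma}(iii) on both $M$ and $M^{\bZ/2}$ to conclude. The main obstacle is really the input of equivariant triangulation and the equivariant cellular--singular comparison; both are standard but invoke external machinery. An alternative route, avoiding triangulation, would be to write $M$ as the union of an equivariant tubular neighborhood of $M^{\bZ/2}$ with the complement of $M^{\bZ/2}$ (which carries a free action) and run an equivariant Mayer--Vietoris argument, reducing once more to vanishing of Tate cohomology for complexes with free $\bZ/2$-action.
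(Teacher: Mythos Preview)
Your argument is correct and follows essentially the same route as the paper: pass to a finite-dimensional equivariant cochain model, observe that the relative complex (cells or critical points off the fixed set) has a basis freely permuted by $\bZ/2$, apply Example \ref{th:tate-acyclic}, and conclude via the Tate long exact sequence. The only difference is the choice of model---the paper uses an invariant Morse function and metric rather than an equivariant triangulation, and in fact explicitly cites equivariant cell decompositions as an equivalent alternative.
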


This theorem and \eqref{eq:smith-1} imply the Smith inequality
\begin{equation} \label{eq:classical-smith}
\mathrm{dim} \, H^*(M)^{\bZ/2} \geq \mathrm{dim}\, H^*(M^{\bZ/2}).
\end{equation}
The localization theorem is not hard to prove. It is technically convenient to use Morse cochains rather than singular cochains, since the Morse complexes are finite-dimensional (compare e.g.\ \cite[Proposition VII.10.1]{brown} or \cite[Theorem 2.6]{lipshitz-treumann12}, which both use equivariant cell decompositions, for the same reason). Equip the pair $(M,M^{\bZ/2})$ with a suitable Morse function and metric \cite[Definition 4.27]{schwarz-morse}, so that the Morse cochain complex $\mathit{CM}^*(M)$ comes with a projection to its counterpart $\mathit{CM}^*(M^{\bZ/2})$, implementing the Morse homology analogue of the restriction map. One can do this invariantly with respect to the $\bZ/2$-action \cite[Example 4]{seidel-smith10}, and the induced map on group cohomology is the Morse-theoretic counterpart of \eqref{eq:fixed-point-restriction}. The kernel of the projection, which is the relative Morse complex $\mathit{CM}^*(M,M^{\bZ/2})$, has generators which are the non-$\bZ/2$-invariant critical points of our Morse function. Hence, it satisfies the conditions from Example \ref{th:tate-acyclic}, which means that $\hat{H}^*(\bZ/2; \mathit{CM}^*(M,M^{\bZ/2})) = 0$. In view of the Tate analogue of the long exact sequence \eqref{eq:equivariant-les-0}, this implies Theorem \ref{th:localization}.

In parallel with the previous algebraic discussion, let's take an arbitrary $M$ (with no given action), and consider the $\bZ/2$-action on $M \times M$ which exchanges the two factors. While the Eilenberg-Zilber \cite{eilenberg-zilber} isomorphism $H^*(M \times M) \iso H^*(M) \otimes H^*(M)$ is $\bZ/2$-equivariant, the underlying chain map is not. However, there is a refinement of its construction \cite{dold59} which yields the following:

\begin{theorem} \label{th:dold} There is a canonical isomorphism
\begin{equation} \label{eq:equi-kunneth}
H^*_{\bZ/2}(M \times M) \iso H^*(\bZ/2; C^*(M) \otimes C^*(M)).
\end{equation}
\end{theorem}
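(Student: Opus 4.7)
The goal is to produce a natural $\bZ/2$-equivariant quasi-isomorphism linking the chain complexes $C^*(M \times M)$ and $C^*(M) \otimes C^*(M)$, each with the involution that swaps the two factors. Given such a (zig-zag of) quasi-isomorphism(s), Lemma \ref{th:3-lemma}(iii) immediately delivers the desired isomorphism on group cohomology, which by definition reads $H^*_{\bZ/2}(M \times M) = H^*(\bZ/2; C^*(M \times M)) \iso H^*(\bZ/2; C^*(M) \otimes C^*(M))$. Consequently, the entire content of the theorem lies in finding a chain-level comparison that respects the $\bZ/2$-action.

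At the level of singular \emph{chains}, the classical Eilenberg-Zilber shuffle map
\[
\nabla: C_*(M) \otimes C_*(M) \longrightarrow C_*(M \times M)
\]
is a natural quasi-isomorphism whose defining formula — a sum over $(p,q)$-shuffles of the two simplex factors — is symmetric under swapping those factors (over $\bK = \bF_2$ this is immediate since there are no signs). Dualizing over $\bK$ preserves equivariance and yields an equivariant quasi-isomorphism $C^*(M \times M) \to \mathrm{Hom}_\bK(C_*(M) \otimes C_*(M), \bK)$. The natural equivariant inclusion $C^*(M) \otimes C^*(M) \hookrightarrow \mathrm{Hom}_\bK(C_*(M) \otimes C_*(M), \bK)$ is a quasi-isomorphism by Künneth, since $M$ is compact and hence $H^*(M)$ is finite-dimensional. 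Putting these together gives an equivariant zig-zag of quasi-isomorphisms linking the two sides, which is all that is needed to apply Lemma \ref{th:3-lemma}(iii).

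The more conceptual packaging, and the actual route taken in \cite{dold59}, is the method of acyclic models carried out equivariantly: both functors $X \mapsto C_*(X \times X)$ and $X \mapsto C_*(X) \otimes C_*(X)$ are naturally $\bZ/2$-equivariant, take values that are equivariantly acyclic on suitable free models (built from $\bZ/2 \times \Delta^n$), and hence admit a natural equivariant chain map between them, unique up to equivariant chain homotopy; a comparison with the non-equivariant shuffle map shows it is a quasi-isomorphism, and one then dualizes as before. The main obstacle throughout is precisely that the usual Alexander-Whitney cochain map is \emph{not} equivariant under the swap, which is exactly what Dold's refinement is designed to repair; once an equivariant chain-level quasi-isomorphism is in hand, Theorem \ref{th:dold} is pure algebra via Lemma \ref{th:3-lemma}(iii).
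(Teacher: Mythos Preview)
The paper does not give its own proof of this theorem; it is quoted as a result from \cite{dold59}, with the surrounding text only pointing out that the standard (Alexander--Whitney) chain-level Eilenberg--Zilber map fails to be equivariant, and that Dold's refinement repairs this. So there is no ``paper's proof'' to compare against beyond the citation.

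Your argument is correct and in fact gives a clean, self-contained route. The key observation---that the shuffle map $\nabla: C_*(M) \otimes C_*(M) \to C_*(M \times M)$ is already strictly $\bZ/2$-equivariant (over $\bK = \bF_2$, and in fact with signs too)---lets you bypass the acyclic-models machinery that Dold's original paper uses. The only slightly delicate step is the zig-zag through $\mathrm{Hom}_\bK(C_*(M) \otimes C_*(M),\bK)$, which you need because dualizing $\nabla$ lands there rather than in $C^*(M) \otimes C^*(M)$; your justification that the comparison map is a quasi-isomorphism (finite-dimensionality of $H^*(M)$ via compactness) is right, and the equivariance of that inclusion is a one-line check. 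Lemma~\ref{th:3-lemma}(iii) then finishes exactly as you say. Your second paragraph correctly describes what Dold actually does, so you have both the reference-faithful description and a more elementary alternative.
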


Because of \eqref{eq:quasi-square}, this means that $H^*_{\bZ/2}(M \times M)$ depends only on $H^*(M)$. By combining \eqref{eq:equi-kunneth} with the restriction map \eqref{eq:fixed-point-restriction}, one gets a map of graded $\bK[[h]]$-modules
\begin{equation} \label{eq:equi-cup}
H^*(\bZ/2; C^*(M) \otimes C^*(M)) \longrightarrow H^*(M)[[h]].
\end{equation}
We should add that the construction from \cite{dold59} fits into a commutative diagram
\begin{equation}
\xymatrix{
\ar[d] H^*_{\bZ/2}(M \times M) \ar[rr]^-{\iso} && H^*(\bZ/2; C^*(M) \otimes C^*(M)) \ar[d]
\\
H^*(M \times M) \ar[rr]^-{\iso} && H^*(M) \otimes H^*(M)
}
\end{equation}
where the bottom $\rightarrow$ is the ordinary Eilenberg-Zilber map. From this, it follows that \eqref{eq:equi-cup} fits into a commutative diagram
\begin{equation} \label{eq:cup-diagram}
\xymatrix{
H^*(\bZ/2; C^*(M) \otimes C^*(M)) \ar[rr] \ar[d] && H^*(M)[[h]] \ar[d]^-{\text{set $h$ to zero}} \\
H^*(M) \otimes H^*(M) \ar[rr] && H^*(M)
}
\end{equation}
where the bottom $\rightarrow$ is the ordinary cup product. With that in mind, we call \eqref{eq:equi-cup} the equivariant cup product. By combining it with \eqref{eq:tate}, we get a map
\begin{equation} \label{eq:total-steenrod}
H^*(M) \longrightarrow H^*(M)[[h]],
\end{equation}
called the total Steenrod operation. Here, the grading on $H^*(M)[[h]]$ combines that on $H^*(M)$ and on $\bK[[h]]$; with respect to that combined grading, \eqref{eq:total-steenrod} is degree-doubling. We know from our discussion of \eqref{eq:tate} that \eqref{eq:total-steenrod} becomes linear after multiplying by $h$, and since the target has no $h$-torsion, the map itself must be linear. From \eqref{eq:cup-diagram} one sees that the constant ($h^0$) component of \eqref{eq:total-steenrod} is the ordinary cup square. The higher order parts are the Steenrod squares (this is essentially Steenrod's construction of cohomology operations \cite{steenrod-epstein}). Concretely, in those terms \eqref{eq:total-steenrod} is given by
\begin{equation}  \label{eq:total-sq}
x \mapsto x^2 + h\, \mathit{Sq}^{|x|-1}(x) + h^2 \, \mathit{Sq}^{|x|-2}(x) + \cdots
\end{equation}
By Lemma \ref{th:kaledin} and Theorem \ref{th:localization}, \eqref{eq:total-steenrod} induces an automorphism of $H^*(M)((h))$ as an ungraded $\bK((h))$-module. This is a weak version of the classical fact that $\mathit{Sq}^i = 0$ for $i<0$, and $\mathit{Sq}^0 = \mathit{id}$ (which means that \eqref{eq:total-sq} can be written as $x \mapsto h^{|x|}x + \text{lower powers of $h$}$). 
\begin{figure}
\begin{centering}
\begin{picture}(0,0)%
\includegraphics{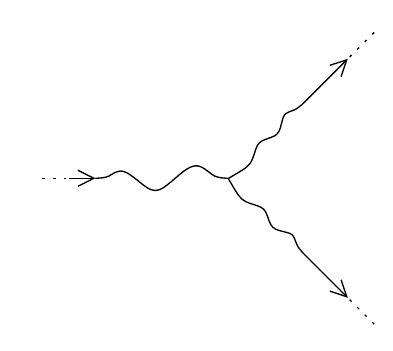}%
\end{picture}%
\setlength{\unitlength}{3355sp}%
\begingroup\makeatletter\ifx\SetFigFont\undefined%
\gdef\SetFigFont#1#2#3#4#5{%
  \reset@font\fontsize{#1}{#2pt}%
  \fontfamily{#3}\fontseries{#4}\fontshape{#5}%
  \selectfont}%
\fi\endgroup%
\begin{picture}(2205,1969)(961,-2084)
\put(976,-1141){\makebox(0,0)[lb]{\smash{{\SetFigFont{10}{12.0}{\rmdefault}{\mddefault}{\updefault}{\color[rgb]{0,0,0}$y$}%
}}}}
\put(2851,-1636){\makebox(0,0)[lb]{\smash{{\SetFigFont{10}{12.0}{\rmdefault}{\mddefault}{\updefault}{\color[rgb]{0,0,0}$-\nabla f$}%
}}}}
\put(3151,-2011){\makebox(0,0)[lb]{\smash{{\SetFigFont{10}{12.0}{\rmdefault}{\mddefault}{\updefault}{\color[rgb]{0,0,0}$x^+$}%
}}}}
\put(3151,-286){\makebox(0,0)[lb]{\smash{{\SetFigFont{10}{12.0}{\rmdefault}{\mddefault}{\updefault}{\color[rgb]{0,0,0}$x^-$}%
}}}}
\put(2851,-661){\makebox(0,0)[lb]{\smash{{\SetFigFont{10}{12.0}{\rmdefault}{\mddefault}{\updefault}{\color[rgb]{0,0,0}$-\nabla f$}%
}}}}
\put(1276,-1336){\makebox(0,0)[lb]{\smash{{\SetFigFont{10}{12.0}{\rmdefault}{\mddefault}{\updefault}{\color[rgb]{0,0,0}$-\nabla f$}%
}}}}
\end{picture}%
\caption{\label{fig:y-graph}}
\end{centering}
\end{figure}

For the purposes of translating it to Floer theory, it is instructive to mention the Morse-theoretic version of \eqref{eq:equi-cup}, which was introduced by Betz-Cohen \cite{betz93, betz-cohen94} (see \cite{cohen-norbury12} for a more detailed account). Fix a Morse function $k$ and metric on $M$, so as to define the associated Morse complex $\mathit{CM}^*(M)$. This comes with a product structure (a version of that in \cite{fukaya93})
\begin{equation} \label{eq:morse-product} 
\mathit{CM}^*(M) \otimes \mathit{CM}^*(M) \longrightarrow \mathit{CM}^*(M),
\end{equation}
defined by counting perturbed graph flow lines (Figure \ref{fig:y-graph}). More precisely, one chooses a time-dependent vector field $Y(s)$ ($s \leq 0$), which agrees with $\nabla k$ for $s \ll 0$; and similarly vector fields $X^{\pm}(s)$ ($s \geq 0$), which agree with $\nabla k$ for $s \gg 0$. All are subject to suitable (generically satisfied) transversality conditions. The relevant perturbed gradient flow equation is then
\begin{equation} \label{eq:y-graph-equation}
\left\{
\begin{aligned}
& v: (-\infty,0] \longrightarrow M, && dv_0/ds + Y(s) = 0, && \textstyle\lim_{s \rightarrow -\infty} v(s) = y, \\
& u^+: [0,\infty) \longrightarrow M, && du^+/ds + X^+(s) = 0, && \textstyle\lim_{s \rightarrow +\infty} u^+(s) = x^+, \\
& u^-: [0,\infty) \longrightarrow M, && du^-/ds + X^-(s) = 0, && \textstyle\lim_{s \rightarrow +\infty} u^-(s) = x^-,\\
& v(0) = u^+(0) = u^-(0),
\end{aligned}
\right.
\end{equation}
where $y,x^{\pm}$ are critical points of $k$. Even though the underlying graph admits a $\bZ/2$-action, the perturbations introduced in \eqref{eq:y-graph-equation} will destroy that symmetry, because one may not usually choose $X^+ = X^-$. Hence, \eqref{eq:morse-product} is not strictly commutative. However, in view of the general fact that different choices lead to chain homotopic products, it is commutative up to chain homotopy. That chain homotopy is the first term (in $h$) of a refinement of \eqref{eq:morse-product}, the equivariant Morse product, which is a graded $\bK[[h]]$-module map
\begin{equation} \label{eq:y-equi}
C^*(\bZ/2; \mathit{CM}^*(M) \otimes \mathit{CM}^*(M)) \longrightarrow \mathit{CM}^*(M)[[h]].
\end{equation}
On a technical level, the chain homotopy is defined by a version of \eqref{eq:y-graph-equation} involving an additional parameter. Similarly, the higher order terms of \eqref{eq:y-equi} involve higher-dimensional parameter spaces.

\begin{remark}
The correspondence between \eqref{eq:y-equi} and \eqref{eq:equi-cup} may not be immediately obvious, because we have described the latter as the composition of \eqref{eq:equi-kunneth} and the restriction map; it becomes clearer if one adopts a one-step description of \eqref{eq:equi-cup}, as in \cite[p.~271]{spanier}.
\end{remark}

\subsection{Symplectic fixed points}
Returning to our main topic of symplectic automorphisms, we begin by stating more precisely the situation we are addressing.

\begin{setup} \label{th:setup-1}
Let $(M,\omega_M,\theta_M)$ be a Liouville domain. This means that $M$ is a compact mani\-fold with boundary, with an exact symplectic form $\omega_M = d\theta_M$, such that the dual Liouville vector field $Z_M$ points transversally outwards along the boundary. Let $r_M \in \smooth(M,\bR)$ be a function satisfying
\begin{equation} \label{eq:r-function}
r_M| \partial M = 1, \quad \text{ and } Z_M.r_M = r_M \text{ near $\partial M$.}
\end{equation}
This is unique as a germ near $\partial M$. We will consider only those symplectic automorphisms $\phi$ which are exact in the strict sense, meaning that 
\begin{equation} \label{eq:exact-phi}
\phi^*\theta_M - \theta_M = dG_\phi
\end{equation}
for some function $G_\phi$ which vanishes near $\partial M$. This implies that $\phi$ preserves $Z_M$ near the boundary, hence that
\begin{equation} \label{eq:r-1}
\phi^*r_M = r_M \text{ near $\partial M$.}
\end{equation}
We require that $\phi$ should have no fixed points on $\partial M$. Finally, we require nondegeneracy of its fixed points.
\end{setup}

Recall that a fixed point $x$ of $\phi$ is called nondegenerate if $1$ is not an eigenvalue of $D\phi_x$, which means that $\mathrm{det}(I-D\phi_x) \neq 0$. It is elementary to show that any symplectic automorphism satisfying \eqref{eq:exact-phi} and with no fixed points on the boundary can be perturbed (by a Hamiltonian perturbation supported in the interior of $M$) so that its fixed points become nondegenerate. In this sense, nondegeneracy is a generic condition within the class we are considering.
%In our case, nondegeneracy of the fixed points actually implies that there are no fixed points on $\partial M$, because near the boundary, $\phi$ preserves the Liouville vector field (hence, any fixed point on the boundary would belong to a one-parameter family of such points). Nevertheless, we have preferred to state the two conditions separately.
The Floer cochain complex $\mathit{CF}^*(\phi)$ associated to such a $\phi$ is a finite-dimensional $\bZ/2$-graded complex of vector spaces over $\bK$. Its cohomology $\mathit{HF}^*(\phi)$ is the fixed point Floer cohomology of $\phi$, in the sense of \cite{floer88, dostoglou-salamon93}. Formally, the definition can be interpreted as Morse theory applied to the action functional on the twisted free loop space $\scrL_\phi$ (see Section \ref{subsec:floer}).
%\begin{remark}
%The Euler characteristic of $\mathit{HF}^*(\phi)$ is the classical Lefschetz number $\Lambda(\phi)$. The supertrace of the involution on $\mathit{HF}^*(\phi^2)$ (the Euler characteristic
%\end{remark}

\begin{setup} \label{th:setup-2}
Let $\phi$ be as in Setup \ref{th:setup-1}. Additionally, assume that $\phi^2$ has no fixed points on $\partial M$, and that all its fixed points are nondegenerate (then, $\phi^2$ satisfies all the conditions from Setup \ref{th:setup-2}, since the rest are consequences of the corresponding properties of $\phi$). 
\end{setup}

Nondegeneracy of the fixed points of $\phi^2$ is generic within our class of $\phi$, in the same sense as before. This is a version of the more general nondegeneracy result for periodic points from \cite{robinson70}. One can now define $\mathit{HF}^*(\phi^2)$. As mentioned before, this carries a $\bZ/2$-action, arising from a symmetry (half-rotation) of $\scrL_{\phi^2}$. Due to transversality issues, there is no underlying $\bZ/2$-action on Floer cochains. Nevertheless, one can still define an analogue of the equivariant complex \eqref{eq:c-complex}, which has the form
\begin{equation} \label{eq:equi-cf}
\mathit{CF}^*_{\mathit{eq}}(\phi^2) = \mathit{CF}^*(\phi^2)[[h]].
\end{equation}
The differential on \eqref{eq:equi-cf} consists of the ordinary Floer differential plus an a priori infinite number of additional terms (of increasingly higher powers in $h$). The resulting equivariant Floer cohomology $\mathit{HF}^*_{\mathit{eq}}(\phi^2)$ is a finitely generated $\bZ/2$-graded $\bK[[h]]$-module. It fits into a long exact sequence analogous to \eqref{eq:u-sequence}, hence one gets a counterpart of \eqref{eq:smith-1}:
\begin{equation} \label{eq:smith-2}
\mathrm{dim}\, \mathit{HF}^*(\phi^2)^{\bZ/2} \geq \mathrm{rank}_{\bK[[h]]} \, \mathit{HF}^*_{\mathit{eq}}(\phi^2) = 
\mathrm{dim}_{\bK((h))} \, \mathit{HF}^*_{\mathit{eq}}(\phi^2) \otimes_{\bK[[h]]} \bK((h)).
\end{equation}
So far, none of this is fundamentally new: equivariant Floer cohomology, in various forms, has a long history both in gauge theory \cite{austin-braam95, donaldson02b, froyshov10} and in symplectic geometry \cite{viterbo97a, hutchings08, seidel-smith10, bourgeois-oancea09b}. The treatment in this paper follows the initial part of \cite{seidel-smith10}, see also \cite{hutchings08}.

Fixed point Floer cohomology has a product structure, the pair-of-pants product \cite{schwarz95, salamon99}, which in particular gives rise to a map
\begin{equation} \label{eq:pair-of-pants}
\mathit{HF}^*(\phi) \otimes \mathit{HF}^*(\phi) \longrightarrow \mathit{HF}^*(\phi^2).
\end{equation}
If one equips $\mathit{HF}^*(\phi) \otimes \mathit{HF}^*(\phi)$ with the $\bZ/2$-action which exchanges the two factors, then \eqref{eq:pair-of-pants} becomes $\bZ/2$-equivariant, which means that the following diagram commutes:
\begin{equation} \label{eq:commutativity}
\xymatrix{
\mathit{HF}^*(\phi) \otimes \mathit{HF}^*(\phi) \ar[d]_-{\text{exchange factors}} \ar[rr]^-{\text{pair-of-pants}} && \mathit{HF}^*(\phi^2) \ar[d]^-{\text{involution}} \\
\mathit{HF}^*(\phi) \otimes \mathit{HF}^*(\phi) \ar[rr]^-{\text{pair-of-pants}} && \mathit{HF}^*(\phi^2).
}
\end{equation}
Given that, it is natural to look for a refinement on the level of equivariant cohomology, and that is our equivariant pair-of-pants product \eqref{eq:equi-pants}. The construction of the product, and the proof of its main property (Theorem \ref{th:main}), are the principal results of this paper.

By combining \eqref{eq:equi-pants} with \eqref{eq:tate}, one gets a degree-doubling map
\begin{equation} \label{eq:quantum-steenrod}
\mathit{HF}^*(\phi) \longrightarrow \mathit{HF}^{2*}_{\mathit{eq}}(\phi^2).
\end{equation}
One can extend this uniquely to a map of $\bK((h))$-modules (not preserving the $\bZ/2$-grading)
\begin{equation} \label{eq:tate-floer}
\mathit{HF}^*(\phi)((h)) \longrightarrow \mathit{HF}^*_{\mathit{eq}}(\phi^2) \otimes_{\bK[[h]]} \bK((h)).
\end{equation}
From Lemma \ref{th:kaledin} and Theorem \ref{th:main}, it follows that \eqref{eq:tate-floer} is an isomorphism, which proves Corollary \ref{th:localized-isomorphism}. In view of \eqref{eq:smith-2}, Corollary \ref{th:floer-smith} follows.

\begin{example} \label{th:pss}
Let $(\phi_t)$ be the Hamiltonian flow of a function \eqref{eq:r-function}, assumed to be Morse. Consider $\phi_t$ for sufficiently small $t>0$ (the fixed points correspond to the critical points of our function, and are nondegenerate). One has 
\begin{equation} \label{eq:pss}
\mathit{HF}^*(\phi_t) \iso H^*(M), 
\end{equation}
and the same applies to $\phi_t^2 = \phi_{2t}$. The $\bZ/2$-action on $\mathit{HF}^*(\phi_t^2)$ is trivial, and in fact, there is a canonical isomorphism 
\begin{equation} \label{eq:equi-trivial}
\mathit{HF}^*_{\mathit{eq}}(\phi_t^2) \iso H^*(M)[[h]]
\end{equation}
(but we will not prove that here). The isomorphism \eqref{eq:pss} relates the pair-of-pants product to the standard cup product. In parallel, one expects that under \eqref{eq:equi-trivial}, the equivariant pair-of-pants product will correspond to \eqref{eq:equi-cup}. This becomes particularly plausible when one compares the Morse-theoretic version \eqref{eq:y-equi} with our construction of \eqref{eq:equi-pants} (Section \ref{sec:con}).
\end{example}

With the above example in mind, one can think of \eqref{eq:quantum-steenrod} as a Steenrod squaring operation in Floer cohomology. Of course, for general $\phi$ its formal structure is not really analogous to that of Steenrod squares, since it relates different Floer cohomology groups. We postpone further discussion of this issue to Section \ref{subsec:quantum-steenrod}, and consider some simple applications, in which Corollary \ref{th:floer-smith} plays the main role.
%
%
% question: (x,y) -> (\phi(y),x). Its fixed points are x = \phi(y), y = x, hence equal those of \phi
% the square is (x,y) -> (\phi(y),x) -> (\phi(x),\phi(y)), so the Smith inequality is obvious
% in this case, one thinks that the equivariant pair-of-pants map should be an isomorphism
% 
% (x,y,z) -> (\phi(z),x,y). Its fixed points are those of \phi
% if I square it, I should get the fixed points of \phi^2, so nothing new.
% Suppose \phi is \Z/2-equivariant. We can take \phi \circ \iota.
%
% Take X -> D Lefschetz fibration with fibres conics (which degenerate)
% Lift a diffeomorphism of (D,marked points), obtained as a braid.
% These are precisely the S^1-invariant diffeomorphisms. They have a circle of fixed
% points over each original fixed point. One can try to move all fixed points into the
% fixed point set of the involution, but minimality is an issue. 
% General conic fibrations xy = ?. 
% E.g. powers of Dehn twists. We show that their symplectic fixed point theory
% grows? Or, pseudo-Anosovs in 4 dimensions.

\begin{application} \label{th:powers-of-2}
Let $\scrS$ be the group of exact symplectic automorphisms of $M$ which are the identity near the boundary. Take $\phi \in \scrS$, and perturb it to $\tilde{\phi} = \phi \circ \phi_t$, using the same 
$\phi_t$ as in Example \ref{th:pss}. Suppose that
\begin{equation}
\mathrm{dim}\, \mathit{HF}^*(\tilde{\phi}) > \mathrm{dim}\, H^*(M).
\end{equation}
Then, the same holds for $\tilde{\phi}^2$, by Corollary \ref{th:floer-smith}. Now, $\tilde{\phi}^2$ is isotopic (rel boundary) to $\phi^2 \circ \phi_{2t}$. Using the isotopy invariance of Floer cohomology and \eqref{eq:pss}, it follows that $[\phi^2] \in \pi_0(\scrS)$ is nontrivial. Moreover, this argument can be iterated, hence the classes 
\begin{equation}
[\phi^2], [\phi^4], [\phi^8],\dots \in \pi_0(\scrS)
\end{equation}
are all nontrivial. Under the additional Assumption \ref{th:stably-trivial}, this was proved in \cite[Corollary 1.3]{hendricks14}, by the same argument.

As an aside, note that if we had an analogue of our theory for all primes $p$, there would be similar statements about powers $\phi^{p^k}$. However, since the theory would use Floer cohomology with coefficients in a characteristic $p$ field, the arguments for different primes can't be combined. It is not clear to the author how to address all iterates in this way.
\end{application}

One can compare the previous application with a classical (purely topological) statement, which says that if the Lefschetz number $\Lambda(\phi)$ satisfies
\begin{equation}
|\Lambda(\phi)| > \mathrm{dim}\, H^*(M;\bQ), 
\end{equation}
then $\phi$ has infinite order up to homotopy (because the action of $\phi$ on rational cohomology must have an eigenvalue with norm $>1$). The connection between the two statements is given by the elementary fact that $\Lambda(\phi)$ is the Euler characteristic of $\mathit{HF}^*(\phi)$. The two kinds of arguments can also be combined fruitfully:

\begin{application}
Suppose that $M$ has nontrivial rational homology only in degrees $0$ and $n$, where $n$ is odd. 
Take an automorphism $\phi$ which satisfies \eqref{eq:exact-phi}, and which acts as minus the identity on $H_n(M;\bQ)$. Then, for any $d$ such that $\phi^d$ has no fixed points on $\partial M$, we have
\begin{equation}
\mathrm{dim}\, \mathit{HF}^*(\phi^d) \geq \mathrm{dim}\,H_*(M;\bQ).
\end{equation}
If $d$ is odd, this is an Euler characteristic computation, $\mathrm{dim}\, \mathit{HF}^*(\phi^d) \geq \Lambda(\phi^d) = \mathrm{dim}\, H_*(M;\bQ)$. The case of even $d$ then follows by applying Corollary \ref{th:floer-smith} to $\phi^{d/2}$.
\end{application}

\begin{application}
Suppose that $M$ admits an involution $\iota$ (compatible with its Liouville structure). Consider an automorphism $\phi$ which satisfies \eqref{eq:exact-phi}, which commutes with $\iota$, and such that $\phi^2$ has no fixed points on $\partial M$. Let $\bar\phi$ be the induced map on the quotient $\bar{M} = M/\iota$. By considering the splitting of cohomology into $\iota$-eigenspaces, one gets
\begin{equation}
\Lambda(\phi) + \Lambda(\iota \circ \phi) = 2 \Lambda(\bar\phi).
\end{equation}
Using the fact that $\phi^2 = (\iota \circ \phi)^2$ and Corollary \ref{th:floer-smith}, one gets
\begin{equation} \label{eq:square-inequality}
\mathrm{dim}\, \mathit{HF}^*(\phi^2) \geq \half (\mathrm{dim}\, \mathit{HF}^*(\phi) + \mathrm{dim}\, \mathit{HF}^*(\iota \circ \phi) )
\geq \half |\Lambda(\phi) + \Lambda(\iota \circ \phi)| = |\Lambda(\bar\phi)|.
\end{equation}
A concrete case of interest is where $M$ is the Milnor fibre of a hypersurface singularity which has multiplicity $m=2$, and which therefore can be written as $x_0^2 + p(x_1,\dots,x_n) = 0$ in local holomorphic coordinates. One takes $\iota$ to be the involution which reverses $x_0$, and $\phi$ the monodromy (perturbed as in Application \ref{th:powers-of-2}). Here, $\bar{M}$ is contractible, so $\Lambda(\bar\phi) = 1$, hence $\mathit{HF}^*(\phi^2) \neq 0$. This nonvanishing statement (which one can also obtain using \cite{hendricks14}, since Assumption \ref{th:stably-trivial} holds here) confirms the first instance of a general conjecture, according to which the $m$-th power of the monodromy has nontrivial fixed point Floer cohomology.
\end{application}

One can compare \eqref{eq:square-inequality} to the elliptic relation \cite{tonkonog14}, which in the same context yields
\begin{equation} \label{eq:tonkonog}
\mathrm{dim} \, \mathit{HF}^*(\phi^2) \geq |\Lambda(\phi^2|M^\iota)|.
\end{equation}
% This applies with Q-coefficients, hence bounds the rank of the Z-coefficient Floer cohomology,
% but by the universal coefficient sequence, we get the same for Z/2-coefficients.
%
% Supertrace of the action of \phi on \phi^2 = supertrace of the action of \phi on \phi^2.
%
% 2\bar{M} - M^\iota = M ?
% 2 Lambda(\bar{\phi}) - \Lambda(\phi|M^\iota) = 
%
% rank \geq |Str(\iota|HF^*)| = |Str(\phi acting on M^\iota)|
%where $\phi^{\mathit{fix}} = \phi|M^{\mathit{fix}}$. More precisely, what one gets first from \cite{tonkonog14} is a lower bound for the rank of the Floer cohomology with $\bZ$-coefficients; but the corresponding bound with $\bK$-coefficients follows from that via the long exact coefficient sequence. Corollary \ref{th:floer-smith} then implies that
%\begin{equation}
%\mathrm{rank} \, \mathit{HF}^*(\phi^2) \geq |\Lambda(\phi^{\mathit{fix}})|.
%\end{equation}
%Depending on circumstances, this may or may not be better than the bound one gets from applying \eqref{eq:tonkonog} itself to $\phi^2$.
%\end{example}
%
% \chi(n)-1. What can one say about the rank of Floer cohomology?
% trace of a symplectic matrix??
%
% monodromy. Trace is even dimensions must be -1, hence there are negative
% eigenvalues, but not necessarily real ones.
% 

\subsection{Symplectic cohomology\label{subsec:symplectic}}
Theorem \ref{th:main} has potential structural implications for $S^1$-equivariant symplectic cohomology, in its ``uncompleted'' or ``finitely supported'' version (``finitely supported'' is the terminology from \cite{zhao14}, which in terms of \cite{albers-cieliebak-frauenfelder14} corresponds to $\underrightarrow{H}\underleftarrow{T}$; however, unlike those two references, we do not a priori invert the equivariant parameter). These implications rely on some compatibility statements (made explicit below), which seem natural but are not proved in this paper. Nevertheless, we discuss the argument briefly here, since it sheds light on the rather remarkable outcome of the computations in \cite{albers-cieliebak-frauenfelder14, zhao14}.

As before, let $M$ be a Liouville domain, and $(\phi_t)$ the Hamiltonian flow of a function \eqref{eq:r-function}. This time, we consider it for large times, and define symplectic cohomology \cite{viterbo97a} as
\begin{equation} \label{eq:symplectic-co}
\mathit{SH}^*(M) = \underrightarrow{\lim}_t \, \mathit{HF}^*(\phi_t).
\end{equation}
The homomorphisms in the direct system are suitable continuation maps. Bearing in mind that $\phi_t = \phi_{t/2}^2$, one can define the $\bZ/2$-equivariant Floer cohomology of $\phi_t$. Let's denote this by $\mathit{HF}^*_{\bZ/2}(\phi_t)$ rather than our usual $\mathit{HF}^*_{\mathit{eq}}(\phi_t)$. 

When defining the equivariant analogue of \eqref{eq:symplectic-co}, one is faced with two different possibilites (because of the issue pointed out in Remark \ref{th:colimit}). Both versions yield $\bZ/2$-graded $\bK[[h]]$-modules, and both fit into long exact sequences
\begin{equation}
\cdots \rightarrow \mathit{SH}^{*-1}_{\bZ/2}(M) \stackrel{h}{\longrightarrow} \mathit{SH}^*_{\bZ/2}(M) \longrightarrow \mathit{SH}^*(M) \rightarrow \cdots 
\end{equation} 
However, otherwise they are quite differently behaved. The first possibility is to build a theory based on cochain spaces which are complete with respect to the filtration by powers of $h$. Concretely, if $\mathit{SC}^*(M)$ is the cochain space underlying $\mathit{SH}^*(M)$ (let's say, defined using a quadratically growing Hamiltonian), then the equivariant version would use $\mathit{SC}^*(M)[[h]]$, with a differential that modifies that on $\mathit{SC}^*(M)$ by terms of order $\geq 1$ in $h$ (compare \cite[Remark 8.1]{biased} for the $S^1$-equivariant theory). From an algebraic perspective, this puts us in the situation of Remark \ref{th:spectral-sequence}. In particular, this version of equivariant symplectic cohomology vanishes whenever $\mathit{SH}^*(M) = 0$. 

However, here we will adopt the other possibility, which is this:
\begin{equation} \label{eq:2-sh}
\mathit{SH}^*_{\bZ/2}(M) \stackrel{\mathrm{def}}{=} \underrightarrow{\lim}_t \, \mathit{HF}^*_{\bZ/2}(\phi_t).
\end{equation}
Obviously, to make that rigorous, one needs equivariant continuation maps. Suppose that such maps have been defined, and that they commute with the equivariant pair-of-pants product. After applying \eqref{eq:tate}, one would then have commutative diagrams ($s<t$)
\begin{equation} \label{eq:sh-tate-1}
\xymatrix{
\mathit{HF}^*(\phi_{s/2}) \ar[rrrr]^-{\text{continuation map}} \ar[d] &&&&
\mathit{HF}^*(\phi_{t/2}) \ar[d] \\
\mathit{HF}^{2*}_{\bZ/2}(\phi_s)
\ar[rrrr]^-{\text{equivariant continuation map}} &&&&
\mathit{HF}^{2*}_{\bZ/2}(\phi_t),
}
\end{equation}
hence in the direct limit a map
\begin{equation}
\mathit{SH}^*(M) \longrightarrow \mathit{SH}^{2*}_{\bZ/2}(M).
\end{equation}
Theorem \ref{th:main} implies that the vertical maps in \eqref{eq:sh-tate-1} induce isomorphisms (of ungraded $\bK((h))$-modules) $\mathit{HF}^*(\phi_{t/2}) \otimes \bK((h)) \iso \mathit{HF}^*_{\bZ/2}(\phi_t) \otimes_{\bK[[h]]} \bK((h))$. Passing to the direct limit (and noting that taking the tensor product with $\bK((h))$ commutes with the direct limit) yields
\begin{equation} \label{eq:wow-sh}
\mathit{SH}^*(M) \otimes \bK((h)) \iso \mathit{SH}^*_{\bZ/2}(M) \otimes_{\bK[[h]]} \bK((h)).
\end{equation}

Because $(\phi_t)$ is a flow, the $\bZ/2$-symmetry on the twisted loop space is the restriction of an $S^1$-symmetry. The analogue of \eqref{eq:2-sh} is a version of $S^1$-equivariant symplectic cohomology \cite[Section 5]{viterbo97a}, defined as
\begin{equation} \label{eq:s1-group}
\mathit{SH}^*_{S^1}(M) = \underrightarrow{\lim}_t \, \mathit{HF}^*_{S^1}(\phi_t).
\end{equation}
This is a module over $\bK[[u]]$, where the formal variable $u$ has degree $2$ (of course, this is not particularly meaningful since we consider $\bZ/2$-gradings only, but we say it to keep the connection with classical equivariant cohomology). It sits in a long exact sequence \cite{bourgeois-oancea09b}
\begin{equation} \label{eq:s1-les}
\cdots \rightarrow \mathit{SH}^{*-2}_{S^1}(M) \stackrel{u}{\longrightarrow} \mathit{SH}^*_{S^1}(M) \longrightarrow \mathit{SH}^*(M) \rightarrow \cdots
\end{equation}

\begin{example} \label{th:d}
In the definition \eqref{eq:s1-group} of $S^1$-equivariant symplectic cohomology, one can use Floer cohomology with coefficients in any commutative ring $R$. Let's denote the outcome, which is a module over $R[[u]]$, by $\mathit{SH}^*_{S^1}(M;R)$. The computation in \cite[Section 8.1]{zhao14} and \cite[Section 5.1]{albers-cieliebak-frauenfelder14} shows that for the two-dimensional disc $D$, 
\begin{equation} \label{eq:z-coefficient}
\mathit{SH}^*_{S^1}(D;\bZ) \iso \textstyle \bQ((u)).
\end{equation}
This implies (using the universal coefficient theorem) that
\begin{align}
& \mathit{SH}^*_{S^1}(D;\bQ) \iso \bQ((u)), \label{eq:q-coefficient} \\ 
& \mathit{SH}^*_{S^1}(D;\bF_p) = 0 \quad \text{for any prime $p$.} \label{eq:prime-coefficient}
\end{align}
\end{example}

We now return to our usual coefficient field $\bK = \bF_2$. In that situation, there is a general relation between $S^1$-equivariant cohomology and $\bZ/2$-equivariant cohomology. In classical topological terms, this means that if we are given a manifold $M$ with a circle action, and consider the action of the subgroup $\bZ/2 \subset S^1$, then 
\begin{equation} \label{eq:discretize}
H^*_{\bZ/2}(M) \iso H^*_{S^1}(M) \oplus H^{*-1}_{S^1}(M).
\end{equation}
This is an isomorphism of graded modules over $\bK[[u]]$, where the module structure on the left is defined by setting $u = h^2$. The simplest proof of \eqref{eq:discretize} uses the Borel construction; write $H^*_G(M) = H^*(EG \times_G M)$ for both $G = \bZ/2$ and $G = S^1$. The inclusion $\bZ/2 \subset S^1$ induces a map
\begin{equation} \label{eq:borel-map}
E{\bZ/2} \times_{\bZ/2} M \longrightarrow ES^1 \times_{S^1} M,
\end{equation}
which is a circle bundle whose Chern class is $2u = 0 \in H^2_{S^1}(M)$. The Gysin sequence with $\bK$-coefficients therefore splits, yielding \eqref{eq:discretize}. Even though we will not prove that here, there is a parallel result for symplectic cohomology:
\begin{equation}
\mathit{SH}^*_{\bZ/2}(M) \iso \mathit{SH}^*_{S^1}(M) \oplus \mathit{SH}^{*-1}_{S^1}(M).
\end{equation}
By combining this with \eqref{eq:wow-sh}, one gets
\begin{equation} \label{eq:periodic-sh}
\mathit{SH}^*(M) \otimes \bK((h)) \iso (\mathit{SH}^*_{S^1}(M) \oplus \mathit{SH}^{*-1}_{S^1}(M)) \otimes_{\bK[[u]]} \bK((u)).
\end{equation}
%In the terminology of \cite{zhao14}, $\mathit{SH}^*_{S^1}(M) \otimes_{\bK[[u]]} \bK((u))$ is the periodic symplectic cohomology $\mathit{PSH}(M)$. 

Suppose for instance that $\mathit{SH}^*(M) = 0$. Then \eqref{eq:periodic-sh} vanishes, which means that $u$ acts nilpotently on each element of $\mathit{SH}^*_{S^1}(M)$. By combining this with \eqref{eq:s1-les}, one sees that in fact, $\mathit{SH}^*_{S^1}(M) = 0$, which agrees with \eqref{eq:prime-coefficient}.

\begin{remark} \label{th:homotopy-fixed}
One expects a corresponding result for $\bF_p$-coefficients for any $p$, using $\bZ/p$-equivariant Floer cohomology. This would explain why, when we used integer coefficients in Example \ref{th:d}, the outcome \eqref{eq:z-coefficient} was already a $\bQ((u))$-module: the same should happen whenever ordinary symplectic cohomology (with $\bZ$-coefficients) vanishes.
\end{remark}

\subsection{Related work\label{subsec:quantum-steenrod}}
The general idea of ``quantum Steenrod operations'' is not new. Two distinct approaches had been proposed in the mid-1990s. The first approach was outlined in \cite[Section 2]{fukaya93b}. It is essentially a deformation of the Morse-theoretic picture (Figure \ref{fig:y-graph}) which adds ``quantum'' contributions from pseudo-holomorphic spheres. This is closely related to the idea in this paper, if one took the symplectic manifold to be closed rather than exact, and the symplectic automorphism to be the identity. More precisely, the relation between the two theories would then be parallel to that between the quantum product and the pair-of-pants product.

\begin{remark}
Especially if one considers the analogues for primes $p>2$, there is no a priori reason to expect that the operations from \cite{fukaya93b} would have all the formal properties of the classical topological Steenrod operations. The first relevant question would be whether the action of the symmetric group $S_p$ on the Deligne-Mumford space $\overline{\scrM}_{0,p+1}$ (by permuting the first $p$ marked points) has a homotopy fixed point; which means, whether there is an equivariant map $ES_p \rightarrow \overline{\scrM}_{0,p+1}$, where the notation $ES_p$ is as in \eqref{eq:borel-map}.
\end{remark}

The second approach is based on homotopy theory, hence requires Floer theory to show behaviour close to ordinary Morse theory. Taking $M$ and $\phi$ as in Setup \ref{th:setup-1}, let's impose the following:

\begin{assumption} \label{th:stably-trivial}
$TM$ is stably trivial (as a symplectic vector bundle) and, with respect to that stable trivialization, the map $D\phi: M \rightarrow \mathit{Sp}(\infty)$ is nullhomotopic. 
\end{assumption}

The twisted loop space $\scrL_\phi$ carries a polarization class, an element of $\mathit{KO}^1(\scrL_\phi)$ \cite[Section 2]{cohen-jones-segal95}. Assumption \ref{th:stably-trivial} implies that the polarization class vanishes; in fact, from this perspective the assumption is unnecessarily strong (it would be enough to reduce the structure groups involved from unitary to orthogonal groups), but we use it since it fits in well with the discussion later on. As proposed in \cite{cohen-jones-segal95}, vanishing of the polarization class should allow one to define a Floer stable homotopy type (a spectrum) whose cohomology with $\bK$-coefficients is $\mathit{HF}^*(\phi)$ (to make sense of this, note that Assumption \ref{th:stably-trivial} implies that the Floer cohomology groups can be equipped with a $\bZ$-grading). This requires certain smoothness results for compactified moduli spaces; assuming those, the construction of the homotopy type is described in \cite{cohen09} (a closely related version of Floer homotopy type is discussed in \cite{cohen07}; for constructions in other types of Floer theories, see \cite{manolescu03, lipshitz-sarkar11}). In particular, this equips Floer cohomology with Steenrod operations. For instance, $\mathit{Sq}^1$ would then be the Bockstein operator. Of course, the Bockstein exists even if Assumption \ref{th:stably-trivial} fails (since fixed point Floer cohomology can always be defined with $\bZ$-coefficients). However, one does not expect the same to hold for the general Steenrod operations arising from the Floer homotopy type. Moreover, these operations may depend on additional data that is implicit in using Assumption \ref{th:stably-trivial} (the choice of stable trivialization, and that of the nullhomotopy for $D\phi$).
%
%
%\begin{remark}
%Generally speaking, there is no reason to believe that these operations should have the same formal properties as the classical topological ones: the classical theory makes use of equivariance with respect to symmetric groups, while the ``quantum'' version only admits a cyclic group of symmetries. In other words, it seems likely that the answer to \cite[Problem 2.11]{fukaya93b} should be negative, but we will not pursue that further here.
%\end{remark}

To see how Floer homotopy type might be related to our construction, we need to discuss the localization theorem for symplectic automorphisms proved in \cite{hendricks14}. The basic starting point is the well-known relation between fixed point Floer cohomology and Lagrangian intersection Floer cohomology. This says that
\begin{equation}
\mathit{HF}^*(\phi) \iso \mathit{HF}^*(\Gamma,\Delta),
\end{equation}
where the right hand side is Lagrangian Floer cohomology in $\bar{M} \times M$ (the notation $M \mapsto \bar{M}$ indicates reversal of the sign of the symplectic form), and the Lagrangian submanifolds involved are the graph $\Gamma = \{(x,y) \;:\; y = \phi(x)\}$ as well as the diagonal $\Delta$. Similarly, one has \cite[Proposition 1.6]{hendricks14}
\begin{equation} \label{eq:square-lagrangian}
\mathit{HF}^*(\phi^2) \iso \mathit{HF}^*(\Gamma_2,\Delta_2),
\end{equation}
where now the right hand side takes place in $\bar{M} \times M \times \bar{M} \times M$, for the Lagrangian submanifolds $\Gamma_2 = \{(x_1,y_1,x_2,y_2) \;:\; y_k = \phi(x_k)\}$ and $\Delta_2 = \{(x_1,y_1,x_2,y_2) \;:\; x_2 = y_1, \, x_1 = y_2\}$. Consider the symplectic involution $(x_1,y_1,x_2,y_2) \mapsto (x_2,y_2,x_1,y_1)$. Its fixed point set can be identified with $\bar{M} \times M$, and the fixed parts of $(\Gamma_2,\Delta_2)$ with $(\Gamma,\Delta)$. 
A suitable adaptation of the arguments from \cite{seidel-smith10} (the main issue having to do with the fact that the Lagrangian submanifolds are not closed) shows that, if Assumption \ref{th:stably-trivial} holds, one can define a stabilized localization map
\begin{equation} \label{eq:localization-map}
\mathit{HF}^*_{\mathit{eq}}(\phi^2) \longrightarrow \mathit{HF}^{*+m}(\phi)[[h]]
\end{equation}
(for some large $m$; increasing $m$ amounts to multiplying the localization map with $h$), which becomes an isomorphism after tensoring with $\bK((h))$. Assumption \ref{th:stably-trivial} appears here because, as shown in \cite{hendricks14}, it implies the ``stable normal triviality'' condition on the Lagrangian submanifolds which is a requirement in \cite{seidel-smith10} (as a consequence of this relation, one expects that \eqref{eq:localization-map} depends on choices that are implicit in using Assumption \ref{th:stably-trivial}).

\begin{example} \label{th:111}
If we take our symplectic automorphism to be the identity, then $\Gamma_2 \cap \Delta_2 = M$, and the $\bZ/2$-action on it is trivial. While this is not admissible in our context, one can perturb it as in Example \ref{th:pss}, in which case it seems reasonable to think that \eqref{eq:localization-map} should be multiplication with $h^m$,
\begin{equation}
\mathit{HF}^*_{\mathit{eq}}(\phi^2) = H^*(M)[[h]] \longrightarrow
\mathit{HF}^{*+m}(\phi)[[h]] = H^{*+m}(M)[[h]]
\end{equation}
(more precisely, this should be the case if the nullhomotopy $D\phi \htp \mathit{id}$ is chosen to be the constant one). Recall that in contrast, the map \eqref{eq:quantum-steenrod} gives the total Steenrod operation.
\end{example}

As should be clear from our discussion of Example \ref{th:111}, we don't expect \eqref{eq:quantum-steenrod} and \eqref{eq:localization-map} to be inverses of each other. Instead, one should think of the general situation as follows. In general, there is no Floer stable homotopy type, and correspondingly there are no Steenrod operations which would act on $\mathit{HF}^*(\phi)$ as in classical topology. Instead, we have \eqref{eq:quantum-steenrod} which lands in a different group, namely $\mathit{HF}^*_{\bZ/2}(\phi^2)$. However, if Assumption \ref{th:stably-trivial} holds, we do have \eqref{eq:localization-map} which brings us back to $\mathit{HF}^*(\phi)$, and we then also have a Floer stable homotopy type (moreover, both depend on the same choices). Concretely, this leads to the conjecture that the composition of \eqref{eq:quantum-steenrod} and \eqref{eq:localization-map}, which yields a degree-doubling map
\begin{equation} \label{eq:total-phi-steenrod}
\mathit{HF}^*(\phi) \longrightarrow \mathit{HF}^*(\phi)((h)),
\end{equation}
agrees with the total Steenrod square (in the topological sense) associated to the Floer stable homotopy type. It seems that any attempt to prove this would require one first to revisit \cite{seidel-smith10}, with the aim of finding a more direct construction of \eqref{eq:localization-map}.

The other motivation for this work is the study \cite{lipshitz-treumann12} of $\bZ/2$-localization for the Hochschild homology of bimodules (with applications to Heegaard-Floer theory). Take a dg algebra $\scrA$ and an $\scrA$-bimodule $\scrP$ (both are assumed to be defined over $\bK$, and $\bZ$-graded). The associated Hochschild complex is
\begin{equation} \label{eq:hochschild}
\mathit{CC}_*(\scrA,\scrP) = T(\scrA[1]) \otimes \scrP,
\end{equation}
where $T(\scrA[1])$ is the tensor algebra over the shifted vector space $\scrA[1]$ (for the differential, see e.g.\ \cite[Definition 3.2]{lipshitz-treumann12}, where our choice corresponds to that of the standard bar resolution of the diagonal bimodule; the case where $\scrP$ is also the diagonal bimodule is the most classical one, see e.g.\ \cite[Section 5.3.2]{loday}). Its homology is the Hochschild homology $\mathit{HH}_*(\scrA,\scrP)$. One can consider the derived tensor product
\begin{equation}
\scrP \otimes_{\scrA}^L \scrP = \scrP \otimes T(\scrA[1]) \otimes \scrP,
\end{equation}
(where the differential is again obtained from that on the bar resolution of the diagonal bimodule), and then
\begin{equation} \label{eq:double-tensor}
\mathit{CC}_*(\scrA, \scrP \otimes_{\scrA}^L \scrP)  = T(\scrA[1]) \otimes \scrP \otimes T(\scrA[1]) \otimes \scrP
\end{equation}
carries a $\bZ/2$-action, which cyclically permutes the factors in \eqref{eq:double-tensor}. It is important to note that as a chain complex, \eqref{eq:double-tensor} is not the tensor product of two copies of \eqref{eq:hochschild}. Lipshitz and Treumann take the Tate complex $\hat{C}^*(\bZ/2;\mathit{CC}_*(\scrA, \scrP \otimes_{\scrA}^L \scrP))$ and filter it by the grading in \eqref{eq:double-tensor}. Applying \eqref{eq:tate} to the associated graded space shows that the resulting spectral sequence has
\begin{equation} \label{eq:spectral-sequence}
E_1 \iso \mathit{CC}_{*/2}(\scrA,\scrP)((h)).
\end{equation}
The $E_1$ differential vanishes, and that on the $E_2$ page can be identified with the Hochschild differential for $\scrP$ (our notation is somewhat rough; we refer to \cite[Propositions 3.10 and 3.12]{lipshitz-treumann12} for precise statements and proofs). Convergence of the spectral sequence can be taken care of by suitable homological boundedness assumptions ($\scrA$ should be smooth and proper, and $\scrP$ bounded) \cite[Proposition 3.8]{lipshitz-treumann12}. We will assume from now on that these assumptions hold. More importantly, one would like the spectral sequence to degenerate at the $E_3$ page, in order to derive an isomorphism (at least non-canonically) between $\mathit{HH}_*(\scrA,\scrP)((h))$ and $\hat{H}^*(\bZ/2;\mathit{CC}_*(\scrA, \scrP \otimes_{\scrA}^L \scrP))$. A key result says that it is enough to show this for the case when $\scrP = \scrA^!$ \cite[Theorem 5]{lipshitz-treumann12}. Further investigation of this ``$\pi$-formality'' condition leads to interesting relations with noncommutative geometry \cite{kaledin08}, which are beyond the scope of our discussion here. Assuming $\pi$-formality, one obtains a Smith-type inequality \cite[Theorem 4]{lipshitz-treumann12}
\begin{equation} \label{eq:l-t}
\mathrm{dim}\, \mathit{HH}_*(\scrA,\scrP) \leq \mathrm{dim}\, \mathit{HH}_*(\scrA,\scrP \otimes_{\scrA} \scrP).
\end{equation}

The connection with symplectic geometry concerns the case where $\scrA$ describes the Fukaya category of a (closed) symplectic manifold, and $\scrP$ is the graph bimodule of a symplectic automorphism. Assuming the existence of a suitable diagonal decomposition in the Fukaya category, $\mathit{HH}(\scrA,\scrP)$ agrees with fixed point Floer homology \cite[Conjecture 1.4]{lipshitz-treumann12}. Even though the goals are quite close, as one can see by comparing Corollary \ref{th:floer-smith} and \eqref{eq:l-t}, the Lipshitz-Treumann approach seems to be substantially different from the one in this paper; in particular, it is not clear what the geometric interpretation of \eqref{eq:spectral-sequence} should be.

\begin{remark}
Another direction for future work, which is natural from the viewpoint of \cite{lipshitz-treumann12, hendricks14}, would be to generalize our pair-of-pants product from symplectic automorphisms to closed chains of Lagrangian correspondences, replacing fixed point Floer cohomology with quilted Floer cohomology \cite{wehrheim-woodward10}.
\end{remark}

\section{Two parameter spaces\label{sec:morse}}

This section introduces certain manifolds with corners, which will be later used as parameter spaces for appropriate families of Cauchy-Riemann equations. Even though these manifolds could be defined purely combinatorially, we prefer to construct them geometrically using Morse theory.

\subsection{Morse theory for real projective space}
Take the infinite-dimensional sphere
\begin{equation}
S^\infty = \textstyle \bigcup_i S^i. 
\end{equation}
Points of $S^\infty$ are sequences $v = (\nu_0,\nu_1,\dots)$ with almost all $\nu_k \in \bR$ vanishing, and such that $\nu_0^2 + \nu_1^2 + \cdots = 1$. We consider $S^\infty$ as the union of the finite-dimensional sub-spheres $S^i = \{\nu_{i+1} = \nu_{i+2} = \cdots = 0\}$. Taking the quotient by the involution $v = (\nu_0,\nu_1,\dots) \mapsto -v = (-\nu_0,-\nu_1,\dots)$ gives rise to the infinite-dimensional real projective space $\bR P^\infty$. We will also use the shift self-embedding $\tau: S^\infty \rightarrow S^\infty$, $\tau(\nu_0,\nu_1,\dots) = (0,\nu_0,\nu_1,\dots)$. 

Take a standard Morse function on $S^\infty$,
\begin{equation}
f(v) = \textstyle\sum_k k \nu_k^2.
\end{equation}
Its critical points are $v^{i,\pm} = \{\nu_i = \pm 1, \; \nu_j = 0 \text{ for } j \neq i\}$, of value and Morse index $i$ (both have the same image $v^i$ in $\bR P^\infty$). As usual in Morse theory, we want to consider the negative gradient flow of $f$.

\begin{data} \label{th:setup-metric}
Choose a Riemannian metric on $S^\infty$ (that is to say, a sequence of mutually compatible metrics on the spheres $S^i$) such that: reversing the sign of any coordinate(s) is an isometry; and $\tau$ is an isometry. 
\end{data}

As a consequence of the symmetry condition, $-\nabla f$ is tangent to each sub-sphere $S^i$. This implies that it has a well-defined flow, which can be analyzed by finite-dimensional methods. 

\begin{lemma} \label{th:stable-unstable}
The unstable and stable manifolds of $-\nabla f$ are
\begin{align}
& W^u(v^{i,\pm}) = \{\pm \nu_i > 0, \; \nu_{i+1} = \nu_{i+2} = \cdots = 0\}, \label{eq:stable} \\ 
& W^s(v^{i,\pm}) = \{\nu_0 = \cdots = \nu_{i-1} = 0, \; \pm \nu_i > 0\}. \label{eq:unstable}
\end{align}
\end{lemma}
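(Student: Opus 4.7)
The plan is to exploit the symmetries of the metric built into Data~\ref{th:setup-metric}. For each index $j$, the sign-reversal involution $\sigma_j: S^\infty \to S^\infty$ that flips $\nu_j$ is an isometry preserving $f$, so $-\nabla f$ is $\sigma_j$-equivariant and its fixed locus $\{\nu_j = 0\} \cap S^\infty$ is invariant under the flow. Intersecting these loci over $j > i$ yields flow-invariance of the sub-sphere $S^i$, and intersecting them over $j < i$ yields flow-invariance of $\Sigma^i := \{\nu_0 = \cdots = \nu_{i-1} = 0\} \cap S^\infty$. This reduces the problem to analysing the gradient flow separately within each of these two invariant submanifolds.

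Next I would pin down the local behaviour at $v^{i,\pm}$ via a Hessian computation. In the local chart $(\nu_j)_{j \neq i}$ on $S^\infty$, substituting $\nu_i^2 = 1 - \sum_{j \neq i} \nu_j^2$ gives
\[ f = i + \sum_{j \neq i}(j-i)\,\nu_j^2, \]
so the unstable eigenspace of the Hessian is spanned by $\{\partial_{\nu_j}\}_{j < i}$ (an $i$-dimensional space, matching the stated Morse index), and it coincides with $T_{v^{i,\pm}} S^i$; the stable eigenspace is spanned by $\{\partial_{\nu_j}\}_{j > i}$ and coincides with $T_{v^{i,\pm}} \Sigma^i$. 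Applying the stable/unstable manifold theorem inside any finite-dimensional $S^N$ with $N > i$, the local unstable manifold is therefore contained in $S^i$ and the local stable manifold in $\Sigma^i$.

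For the global identification, consider the unstable side first. The closed hemisphere $\{\pm\nu_i \geq 0\} \cap S^i$ is flow-invariant (since its boundary $S^{i-1} = \{\nu_i = 0\} \cap S^i$ is), and $v^{i,\pm}$ is the unique maximum of $f|_{S^i}$ on it. A backward trajectory starting at any point of the open hemisphere $\{\pm\nu_i > 0\} \cap S^i$ remains in the open hemisphere (by invariance of $\{\nu_i = 0\}$), and since $f$ is strictly increasing along it and bounded by $i$, compactness plus the standard Morse/LaSalle argument forces convergence to a critical point of $f|_{S^i}$ lying in the open hemisphere — which can only be $v^{i,\pm}$. Conversely, $W^u(v^{i,\pm}) \subset S^i$ by the previous step, and it cannot meet $\{\nu_i = 0\} = S^{i-1}$ (invariant, doesn't contain $v^{i,\pm}$) nor the opposite open hemisphere (whose backward limit is $v^{i,\mp}$). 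This gives \eqref{eq:stable}.

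The stable side is entirely analogous, with $S^i$ replaced by $\Sigma^i$ and the role of maximum played by minimum: on $\Sigma^i$ one has $f = i + \sum_{k > i}(k-i)\nu_k^2$, so $v^{i,\pm}$ is the unique minimum on each open hemisphere $\{\pm\nu_i > 0\} \cap \Sigma^i$, and forward trajectories there converge to it. The only subtlety is that $\Sigma^i$ is infinite-dimensional, but any trajectory is contained in $\Sigma^i \cap S^N$ for some finite $N$ (since points of $S^\infty = \bigcup_N S^N$ have only finitely many nonzero components), reducing the convergence argument to the finite-dimensional case already handled. The main obstacle is really just this bookkeeping: once the reflection symmetries are used to carve out the two invariant submanifolds and confine the local stable/unstable manifolds inside them, the dynamics becomes standard Morse theory on round spheres.
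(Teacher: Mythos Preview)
Your approach is essentially the same as the paper's: both use the flow-invariance of the sub-spheres (the paper states this as a consequence of the symmetry conditions in Data~\ref{th:setup-metric}, just as you derive it), observe that $v^{i,\pm}$ is a local maximum of $f|_{S^i}$ so that the local unstable manifold fills a neighborhood in $S^i$, and then argue the reverse inclusion.

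There is one small imprecision. In your third paragraph you assert that a backward trajectory in the open hemisphere converges to a critical point \emph{lying in the open hemisphere}. The LaSalle/Morse argument only gives a limit in the \emph{closed} hemisphere; a priori the limit could be one of the boundary critical points $v^{j,\pm}$ with $j<i$. The paper rules this out explicitly: since your own Hessian computation (applied at $v^{j,\pm}$ rather than $v^{i,\pm}$) shows that the local unstable manifold of $v^{j,\pm}$ lies in $S^j \subset S^{i-1} = \{\nu_i=0\}$, any trajectory backward-converging to such a point would have $\nu_i \equiv 0$, contradicting membership in the open hemisphere. You have all the ingredients for this step; you just need to invoke them.
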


The answers are independent of the choice of metric (within the class from Data \ref{th:setup-metric}); in particular, one sees that $\nabla f$ is always Morse-Smale. Note that $\tau^*f = f+1$. Because of this and the assumptions on the metric, $\tau$ induces a map between the space of trajectories connecting $v^{i,\pm}$ and $v^{j,\pm}$, and the corresponding space for $v^{i+1,\pm}$ and $v^{j+1,\pm}$; the explicit description shows that this map is a diffeomorphism. Of course, there is also the involution, which exchanges the critical points $v^{i,+}$ and $v^{i,-}$, and acts correspondingly on the spaces of gradient flow lines.

\begin{proof}[Proof of Lemma \ref{th:stable-unstable}]
Each point in $S^i$ which is sufficiently close to $v^{i,\pm}$ asymptotically goes to that critical point if we flow up the gradient (because $v^{i,\pm}$ is a local maximum for $f|S^i$). For dimension reasons, this fully describes $W^u(v^{i,\pm})$ locally near the critical point. Since this local part lies entirely inside $S^i \setminus S^{i-1}$, and that set is invariant under the flow of $\nabla f$, it follows that 
\begin{equation} \label{eq:stable-1}
W^u(v^{i,\pm}) \subset S^i \setminus S^{i-1}.
\end{equation}
A point of $S^i \setminus S^{i-1}$ can't asymptotically flow to any critical point $v^{j,\pm}$ with $j<i$, because that would contradict \eqref{eq:stable-1} for that critical point. Hence, it must converge to $v^{i,\pm}$, where the sign is determined by the connected component of $S^i \setminus S^{i-1}$ in which it lies. This shows \eqref{eq:stable}, and the proof of \eqref{eq:unstable} is similar.
\end{proof}
\begin{figure}
\begin{centering}
\begin{picture}(0,0)%
\includegraphics{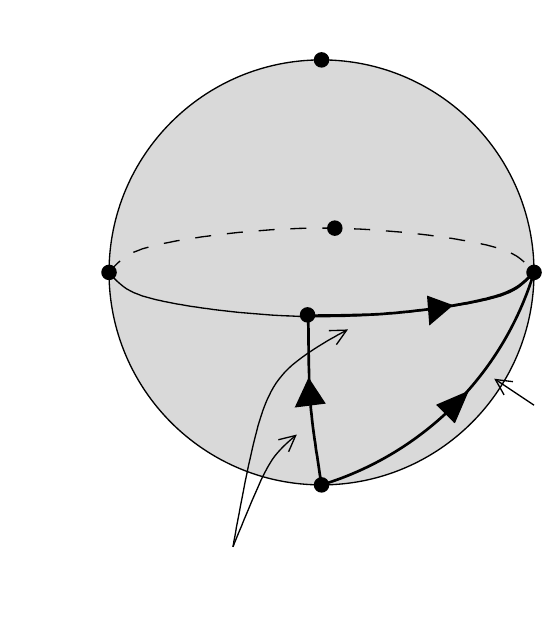}%
\end{picture}%
\setlength{\unitlength}{3355sp}%
\begingroup\makeatletter\ifx\SetFigFont\undefined%
\gdef\SetFigFont#1#2#3#4#5{%
  \reset@font\fontsize{#1}{#2pt}%
  \fontfamily{#3}\fontseries{#4}\fontshape{#5}%
  \selectfont}%
\fi\endgroup%
\begin{picture}(3105,3532)(586,-2522)
\put(602,-2266){\makebox(0,0)[lb]{\smash{{\SetFigFont{10}{12.0}{\rmdefault}{\mddefault}{\updefault}{\color[rgb]{0,0,0}a point in the stratum}%
}}}}
\put(3706,-511){\makebox(0,0)[lb]{\smash{{\SetFigFont{10}{12.0}{\rmdefault}{\mddefault}{\updefault}{\color[rgb]{0,0,0}$v^{0,+}$}%
}}}}
\put(751,-511){\makebox(0,0)[lb]{\smash{{\SetFigFont{10}{12.0}{\rmdefault}{\mddefault}{\updefault}{\color[rgb]{0,0,0}$v^{0,-}$}%
}}}}
\put(2401,-136){\makebox(0,0)[lb]{\smash{{\SetFigFont{10}{12.0}{\rmdefault}{\mddefault}{\updefault}{\color[rgb]{0,0,0}$v^{1,+}$}%
}}}}
\put(2401,-1936){\makebox(0,0)[lb]{\smash{{\SetFigFont{10}{12.0}{\rmdefault}{\mddefault}{\updefault}{\color[rgb]{0,0,0}$v^{2,-}$}%
}}}}
\put(2401,839){\makebox(0,0)[lb]{\smash{{\SetFigFont{10}{12.0}{\rmdefault}{\mddefault}{\updefault}{\color[rgb]{0,0,0}$v^{2,+}$}%
}}}}
\put(3676,-1336){\makebox(0,0)[lb]{\smash{{\SetFigFont{10}{12.0}{\rmdefault}{\mddefault}{\updefault}{\color[rgb]{0,0,0}a point in $\scrQ^{2,-}$}%
}}}}
\put(2101,-661){\makebox(0,0)[lb]{\smash{{\SetFigFont{10}{12.0}{\rmdefault}{\mddefault}{\updefault}{\color[rgb]{0,0,0}$v^{1.-}$}%
}}}}
\put(605,-2490){\makebox(0,0)[lb]{\smash{{\SetFigFont{10}{12.0}{\rmdefault}{\mddefault}{\updefault}{\color[rgb]{0,0,0}$\scrQ^{1,+} \times \scrQ^{1,-} \subset \bar{\scrQ}^{2,-}$}%
}}}}
\end{picture}%
\end{centering}
\caption{\label{fig:flow-lines}}
\end{figure}

For $i > 0$ and $\sigma \in \{+,-\}$, we define $\scrQ^{i,\sigma}$ to be the space of (unparametrized) trajectories of $-\nabla f$ connecting $v^{i,\sigma}$ to $v^{0,+}$ (see Figure \ref{fig:flow-lines}). To clarify the terminology, this is the space of solutions
\begin{equation}
\left\{
\begin{aligned}
& w: \bR \rightarrow S^\infty, \\
& dw/ds + \nabla f = 0, \\
& \textstyle \lim_{s \rightarrow -\infty} w(s) = v^{i,\sigma}, \\
& \textstyle \lim_{s \rightarrow \infty} w(s) = v^{0,+}, 
\end{aligned}
\right.
\end{equation}
modulo translation in $s$-direction. Equivalently in terms of \eqref{eq:stable} and \eqref{eq:unstable},
\begin{equation} \label{eq:rh-space}
\scrQ^{i,\sigma} \iso \big(W^u(v^{i,\sigma}) \cap W^s(v^{0,+})\big) / \bR
= \{\nu_{i+1} = \nu_{i+2} = \cdots = 0, \; \nu_0 > 0, \; \sigma \nu_i > 0 \} / \bR.
\end{equation}
The corresponding space of trajectories on $\bR P^\infty$, connecting $v^i$ to $v^0$, can be identified with the disjoint union of $\scrQ^{i,+}$ and $\scrQ^{i,-}$. The spaces $\scrQ^{i,\sigma}$ have standard compactifications $\bar{\scrQ}^{i,\sigma}$, obtained by adding broken flow lines. In our case, this can be written as 
\begin{equation} \label{eq:stratification}
\bar{\scrQ}^{i,\sigma} = \textstyle \bigsqcup\, \scrQ^{i_1,\sigma_1} \times \cdots \times \scrQ^{i_d,\sigma_d}.
\end{equation}
The union is over all partitions $i = i_1 + \cdots + i_d$ and collections of signs $\sigma_1,\dots, \sigma_d$ with $\sigma_1 \cdots \sigma_d = \sigma$. Just like $\scrQ^{i,\sigma}$, the compactification is independent of the choice of metric. To be more precise, let's say that $\bar\scrQ^{i,\sigma}$ is metric-independent as a compact topological space which comes with a decomposition into strata, and a smooth structure on each stratum (the word stratum is used here in an informal way, to refer to the subsets in \eqref{eq:stratification}; the topology of the space, its decomposition, and the smooth structure on each stratum are all independent of the choice of metric).

The $\bR$-action on the space on the right in \eqref{eq:rh-space} depends on the metric, and doesn't usually admit an elementary description. However, suppose that we specialize to the standard round metric on $S^\infty$. In that case, the gradient flow is a normalized linear flow: the unique flow line of $-\nabla f$ with $w(0) = (\nu_0,\nu_1,\dots) \in S^\infty$ is 
\begin{equation} \label{eq:exponential}
w(s) = (\nu_0,e^{-2s}\nu_1,e^{-4s}\nu_2,\dots)/\|(\nu_0,e^{-2s}\nu_1,e^{-4s}\nu_2,\dots)\|.
\end{equation}
Hence, every flow line $[w] \in \scrQ^{i,\sigma}$ can be parametrized in a unique way so that the coordinates of the point $w(0)$ satisfy $\nu_i = \sigma \nu_0$. By mapping $[w]$ to $(\nu_1/\nu_0,\dots,\nu_{i-1}/\nu_0)$, one gets an explicit diffeomorphism
\begin{equation}
\scrQ^{i,\sigma} \longrightarrow \bR^{i-1}.
\end{equation}

Even though this is the most elementary choice of metric, there is another possibility which offers some advantages. Let's say that the metric is standard near the critical points if the following holds:
\begin{equation} \label{eq:wehrheim}
\parbox{35em}{%\linespread{1.2}
Near each point $v^{i,\pm}$, there are local coordinates $\xi_j$ in which the metric is standard, and in which $f = \mathit{const} - \xi_1^2 - \cdots - \xi_i^2 + \xi_{i+1}^2 + \cdots$.
} 
\end{equation}
Such coordinates are easy to find in our case: on the (pairwise disjoint) subsets where $\pm \nu_i > 3/4$ for some $i$, use $|j-i|^{1/2} \nu_j$, $j \neq i$, as coordinates, and take the standard metric in this coordinates; and then extend that metric to the rest of $S^\infty$. As explained in \cite{burghelea-haller01,wehrheim12}, one can use such a metric to equip the spaces $\bar{\scrQ}^{i,\sigma}$ with the structure of a smooth manifold with corners. This is technically highly convenient: for instance, it allows one to construct strictly associative gluing maps which describe the neighbourhoods of the closure of each boundary stratum \cite{qin11, wehrheim12}.

\subsection{Parametrized flow lines}
In the same situation as before, consider the spaces $\scrP^{i,\sigma}$ of parametrized flow lines with limits $v^{i,\sigma}$ and $v^{0,+}$. Equivalently, one can view a parametrized flow line as an unparametrized flow line with one marked point on it (since then, there is a unique parametrization $w$ such that $w(0)$ is the marked point). This identifies $\scrP^{i,\sigma}$ with the intersection $W^u(v^{i,\sigma}) \cap W^s(v^{0,+})$. This time, $i$ is allowed to be zero, in which case $\scrP^{0,-} = \emptyset$ and $\scrP^{0,+} = \mathit{point}$ (corresponding to the constant flow line $w(s) = v^{0,+}$). For $i>0$, $\scrP^{i,\sigma}/\bR = \scrQ^{i,\sigma}$. The spaces of parametrized flow lines have standard compactifications
\begin{equation} \label{eq:stratification-2}
\bar{\scrP}^{i,\sigma} = \textstyle \bigsqcup\, \scrQ^{i_1,\sigma_1} \times \cdots \times \scrP^{i_j,\sigma_j} \times \cdots \times \scrQ^{i_d,\sigma_d}.
\end{equation}
Here, the union is over all partitions and signs as before, but with an additional distinguished choice of $j \in \{1,\dots,d\}$, and where $i_j$ can be zero. The zero-dimensional (corner) strata are  parametrized by $(\sigma_1,\dots,\sigma_{d+1}) \in \{\pm\}^d$ with $\sigma_1\cdots \sigma_{d+1} = \sigma$, together with a choice of $j \in \{1,\dots,d+1\}$ such that $\sigma_j = +$: there are $2^{d-1}(d+1)$ of them. The two-dimensional cases are shown in Figure \ref{fig:2d}, where the $\oplus$ in the labeling of the corners denotes the position of $j$. Similarly, Figure \ref{fig:3d} shows one of the three-dimensional cases (the other one can be obtained from that by switching the $+$ and $-$ labels, but keeping the $\oplus$). 
\begin{figure}
\begin{picture}(0,0)%
\includegraphics{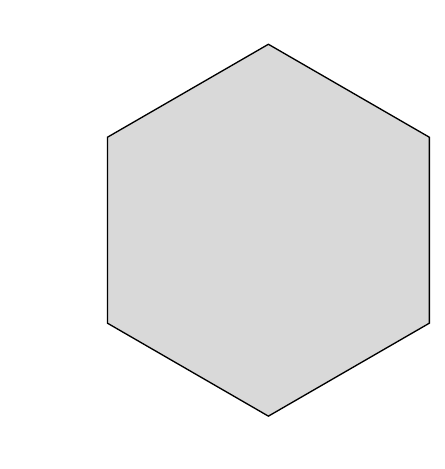}%
\end{picture}%
\setlength{\unitlength}{3355sp}%
\begingroup\makeatletter\ifx\SetFigFont\undefined%
\gdef\SetFigFont#1#2#3#4#5{%
  \reset@font\fontsize{#1}{#2pt}%
  \fontfamily{#3}\fontseries{#4}\fontshape{#5}%
  \selectfont}%
\fi\endgroup%
\begin{picture}(2505,2644)(286,-2009)
\put(1576,-661){\makebox(0,0)[lb]{\smash{{\SetFigFont{10}{12.0}{\rmdefault}{\mddefault}{\updefault}{$\bar{\scrP}^{2,+}$}%
}}}}
\put(2776,-1336){\makebox(0,0)[lb]{\smash{{\SetFigFont{10}{12.0}{\rmdefault}{\mddefault}{\updefault}{$\oplus ++$}%
}}}}
\put(2776,-136){\makebox(0,0)[lb]{\smash{{\SetFigFont{10}{12.0}{\rmdefault}{\mddefault}{\updefault}{$+\oplus+$}%
}}}}
\put(301,-136){\makebox(0,0)[lb]{\smash{{\SetFigFont{10}{12.0}{\rmdefault}{\mddefault}{\updefault}{$--\oplus$}%
}}}}
\put(301,-1336){\makebox(0,0)[lb]{\smash{{\SetFigFont{10}{12.0}{\rmdefault}{\mddefault}{\updefault}{$-\oplus-$}%
}}}}
\put(1576,-1936){\makebox(0,0)[lb]{\smash{{\SetFigFont{10}{12.0}{\rmdefault}{\mddefault}{\updefault}{$\oplus --$}%
}}}}
\put(1576,464){\makebox(0,0)[lb]{\smash{{\SetFigFont{10}{12.0}{\rmdefault}{\mddefault}{\updefault}{$++\oplus$}%
}}}}
\end{picture}%
\quad \quad \quad \quad \quad \quad
\begin{picture}(0,0)%
\includegraphics{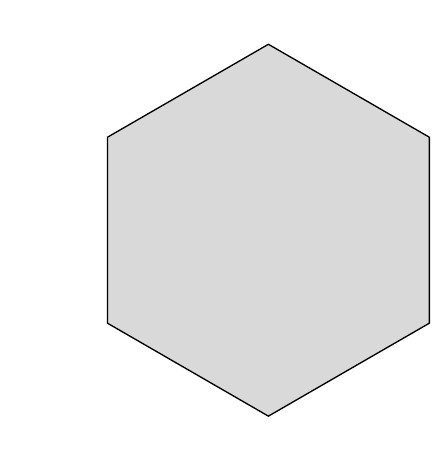}%
\end{picture}%
\setlength{\unitlength}{3355sp}%
\begingroup\makeatletter\ifx\SetFigFont\undefined%
\gdef\SetFigFont#1#2#3#4#5{%
  \reset@font\fontsize{#1}{#2pt}%
  \fontfamily{#3}\fontseries{#4}\fontshape{#5}%
  \selectfont}%
\fi\endgroup%
\begin{picture}(2505,2644)(286,-2009)
\put(2776,-1336){\makebox(0,0)[lb]{\smash{{\SetFigFont{10}{12.0}{\rmdefault}{\mddefault}{\updefault}{$\oplus -+$}%
}}}}
\put(2776,-136){\makebox(0,0)[lb]{\smash{{\SetFigFont{10}{12.0}{\rmdefault}{\mddefault}{\updefault}{$-\oplus+$}%
}}}}
\put(1576,-661){\makebox(0,0)[lb]{\smash{{\SetFigFont{10}{12.0}{\rmdefault}{\mddefault}{\updefault}{$\bar{\scrP}^{2,-}$}%
}}}}
\put(301,-136){\makebox(0,0)[lb]{\smash{{\SetFigFont{10}{12.0}{\rmdefault}{\mddefault}{\updefault}{$+-\oplus$}%
}}}}
\put(301,-1336){\makebox(0,0)[lb]{\smash{{\SetFigFont{10}{12.0}{\rmdefault}{\mddefault}{\updefault}{$+\oplus-$}%
}}}}
\put(1576,-1936){\makebox(0,0)[lb]{\smash{{\SetFigFont{10}{12.0}{\rmdefault}{\mddefault}{\updefault}{$\oplus +-$}%
}}}}
\put(1576,464){\makebox(0,0)[lb]{\smash{{\SetFigFont{10}{12.0}{\rmdefault}{\mddefault}{\updefault}{$-+\oplus$}%
}}}}
\end{picture}%
\caption{\label{fig:2d}}
\end{figure}%
\begin{figure}
\begin{picture}(0,0)%
\includegraphics{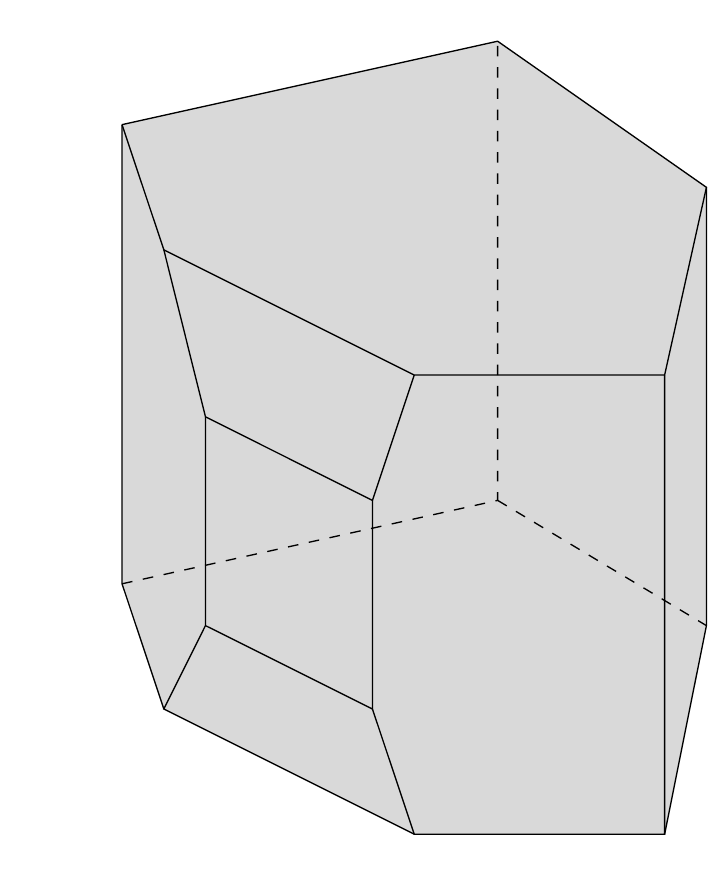}%
\end{picture}%
\setlength{\unitlength}{3355sp}%
\begingroup\makeatletter\ifx\SetFigFont\undefined%
\gdef\SetFigFont#1#2#3#4#5{%
  \reset@font\fontsize{#1}{#2pt}%
  \fontfamily{#3}\fontseries{#4}\fontshape{#5}%
  \selectfont}%
\fi\endgroup%
\begin{picture}(4080,4920)(961,-3121)
\put(3826,-1036){\makebox(0,0)[lb]{\smash{{\SetFigFont{8}{9.6}{\rmdefault}{\mddefault}{\updefault}{$+--\oplus$}%
}}}}
\put(2176,-436){\makebox(0,0)[lb]{\smash{{\SetFigFont{8}{9.6}{\rmdefault}{\mddefault}{\updefault}{$\oplus-+-$}%
}}}}
\put(1426,-2386){\makebox(0,0)[lb]{\smash{{\SetFigFont{8}{9.6}{\rmdefault}{\mddefault}{\updefault}{$+\oplus--$}%
}}}}
\put(4726,-3061){\makebox(0,0)[lb]{\smash{{\SetFigFont{8}{9.6}{\rmdefault}{\mddefault}{\updefault}{$++\oplus+$}%
}}}}
\put(5026,-1786){\makebox(0,0)[lb]{\smash{{\SetFigFont{8}{9.6}{\rmdefault}{\mddefault}{\updefault}{$+++\oplus$}%
}}}}
\put(5026,689){\makebox(0,0)[lb]{\smash{{\SetFigFont{8}{9.6}{\rmdefault}{\mddefault}{\updefault}{$--+\oplus$}%
}}}}
\put(3751,1664){\makebox(0,0)[lb]{\smash{{\SetFigFont{8}{9.6}{\rmdefault}{\mddefault}{\updefault}{$-+-\oplus$}%
}}}}
\put(1126,1214){\makebox(0,0)[lb]{\smash{{\SetFigFont{8}{9.6}{\rmdefault}{\mddefault}{\updefault}{$-+\oplus-$}%
}}}}
\put(3001,614){\makebox(0,0)[lb]{\smash{{\SetFigFont{10}{12.0}{\rmdefault}{\mddefault}{\updefault}{$\bar{\scrP}^{3,+}$}%
}}}}
\put(976,-1561){\makebox(0,0)[lb]{\smash{{\SetFigFont{8}{9.6}{\rmdefault}{\mddefault}{\updefault}{$+-\oplus-$}%
}}}}
\put(2401,-1111){\makebox(0,0)[lb]{\smash{{\SetFigFont{8}{9.6}{\rmdefault}{\mddefault}{\updefault}{$\oplus--+$}%
}}}}
\put(1951,464){\makebox(0,0)[lb]{\smash{{\SetFigFont{8}{9.6}{\rmdefault}{\mddefault}{\updefault}{$-\oplus+-$}%
}}}}
\put(3301,-211){\makebox(0,0)[lb]{\smash{{\SetFigFont{8}{9.6}{\rmdefault}{\mddefault}{\updefault}{$-\oplus-+$}%
}}}}
\put(4801,-361){\makebox(0,0)[lb]{\smash{{\SetFigFont{8}{9.6}{\rmdefault}{\mddefault}{\updefault}{$--\oplus+$}%
}}}}
\put(3151,-2236){\makebox(0,0)[lb]{\smash{{\SetFigFont{8}{9.6}{\rmdefault}{\mddefault}{\updefault}{$\oplus+++$}%
}}}}
\put(2176,-1636){\makebox(0,0)[lb]{\smash{{\SetFigFont{8}{9.6}{\rmdefault}{\mddefault}{\updefault}{$\oplus+--$}%
}}}}
\put(3226,-3061){\makebox(0,0)[lb]{\smash{{\SetFigFont{8}{9.6}{\rmdefault}{\mddefault}{\updefault}{$+\oplus++$}%
}}}}
\end{picture}%
\caption{\label{fig:3d}}
\end{figure}%

As before, $\bar\scrP^{i,\sigma}$ is independent of the metric (chosen as in Data \ref{th:setup-metric}). If additionally \eqref{eq:wehrheim} is satisfied, one can equip that space with the structure of a smooth manifold with corners. 

\begin{remark}
Strictly speaking, spaces of parametrized flow lines do not appear in the literature we have quoted previously. However, one can use the following trick to reduce the discussion to the unparametrized case. Consider $\bR \times S^\infty$ with a Morse function 
\begin{equation} \label{eq:prod-f}
(r,v) \longmapsto \psi(r) + f(v), 
\end{equation}
where $\psi$ has a nondegenerate minimum at $r = 0$ and maximum at $r = 1$, and with the product metric. Then, $\scrP^{i,\sigma}$ can be thought of as the space of unparametrized negative gradient flow lines for \eqref{eq:prod-f} connecting $(1,v^{i,\sigma})$ with $(0,v^{0,+})$ (the marked point on each such gradient flow line is the unique point where $r = 1/2$); and this identification extends to the compactifications.
\end{remark}

%\newpage
\section{Constructions\label{sec:con}}
This section introduces the main objects, namely $\mathit{HF}^*_{\mathit{eq}}(\phi^2)$ and the equivariant pair-of-pants product \eqref{eq:equi-pants}. Both constructions are based on parametrized moduli spaces. Generally speaking, the analytic aspects of such moduli spaces are quite well-known. Hence, we will only include a small amount of details, keeping the technical discussion focused on issues that are specific to this particular application.

\subsection{Review of Floer cohomology\label{subsec:floer}}
The following material is classical, and included in order to make the exposition self-contained. Take $\phi$ as in Setup \ref{th:setup-1}. Formally, the fixed point Floer cohomology of $\phi$ is the Morse cohomology of the action functional on the twisted free loop space. With $G_\phi$ as in \eqref{eq:exact-phi}, this is
\begin{align} \label{eq:phi-action-1}
& \scrL_\phi = \{x \in \smooth(\bR,M) \;:\; x(t) = \phi(x(t+1)) \}, \\
& A_\phi: \scrL_\phi \rightarrow \bR, \quad A_\phi(x) = \textstyle -\int_0^1 x^*\theta_M - G_\phi(x(1)). \label{eq:phi-action-2}
\end{align}
The critical points are constant $x \in \scrL_\phi$, which correspond to fixed points of $\phi$.

\begin{setup} \label{th:setup-3}
Throughout, we will use compatible almost complex structures $J$ on $M$ which satisfy 
\begin{equation} \label{eq:r-2}
dr_M\circ J = -\theta_M
\end{equation}
near the boundary. Property \eqref{eq:exact-phi} and its consequence \eqref{eq:r-1} ensure that \eqref{eq:r-2} is preserved under pushforward by $\phi$.
\end{setup}

Denote by $\scrJ_\phi$ the space of all families $J = (J_t)$ of almost complex structures parametrized by $t \in \bR$, which satisfy \eqref{eq:r-2} for all $t$, as well as the periodicity condition
\begin{equation} \label{eq:j-periodicity}
J_t = \phi_*(J_{t+1}).
\end{equation}
Formally, each such family defines an $L^2$ metric on $\scrL_\phi$, which one uses to define the gradient of the action functional. Choose a $J_\phi \in \scrJ_\phi$, and consider negative gradient flow lines connecting two fixed points $y$ and $x$. These are solutions of the Cauchy-Riemann equation (Floer's equation)
\begin{equation} \label{eq:floer}
\left\{
\begin{aligned}
& u: \bR^2 \longrightarrow M, \\
& u(s,t) = \phi(u(s,t+1)), \\
& \partial_s u + J_{\phi,t} \,\partial_t u = 0, \\
& \textstyle \lim_{s \rightarrow -\infty} u(s,\cdot) = y, \\
& \textstyle \lim_{s \rightarrow +\infty} u(s,\cdot) = x,
\end{aligned}
\right.
\end{equation}
up to translation in $s$-direction. Given a solution, consider the function $r_M(u)$. At all points where $u(s,t)$ is sufficiently close to $\partial M$, this function is $1$-periodic in $t$, and subharmonic. Given that, the maximum principle shows that $u$ can't reach $\partial M$, hence the fact that $M$ has a boundary is effectively irrelevant. Assuming that $J_\phi$ has been chosen generically, the moduli spaces $\scrM(y,x)$ of unparametrized Floer trajectories (non-constant solutions of \eqref{eq:floer}, up to translation in $s$-direction) are regular, hence smooth finite-dimensional manifolds. These manifolds can have connected components of different dimensions, but the parity of the dimension is always given by
\begin{equation}
\mathrm{dim}\, \scrM(y,x) \equiv |y|-|x|-1 \;\; \text{mod } 2, 
\end{equation}
where $|x| \in \bZ/2$ is determined by the local sign
\begin{equation}
(-1)^{|x|} = \mathrm{sign}\big(\mathrm{det}(I - D\phi_x)\big). \label{eq:mod2-degree}
\end{equation}
Moreover, each $\scrM(y,x)$ has only finitely many zero-dimensional components (isolated points). Denote the number of such points (mod 2) by $\#\scrM(y,x) \in \bK$.

\begin{definition}
The Floer cochain space is $\mathit{CF}^*(\phi) = \bigoplus_x \bK x$, where the sum is over fixed points, and the degree (mod $2$) of each generator is as in \eqref{eq:mod2-degree}. The differential is
\begin{equation} \label{eq:d-floer}
d_{J_\phi}(x) = \textstyle\sum_y \#\scrM(y,x) \, y.
\end{equation}
\end{definition}

For the application to $\phi^2$ (in Setup \ref{th:setup-2}), we find it convenient to slightly tweak this framework (the outcome is still equivalent to the original one). Given $G_\phi$, there is a natural choice of a corresponding function for $\phi^2$,
\begin{equation}
G_{\phi^2} = \phi^*G_\phi + G_\phi.
\end{equation}
We use the twisted loop space with period $2$, so the counterparts of \eqref{eq:phi-action-1}, \eqref{eq:phi-action-2} are
\begin{align} 
& \scrL_{\phi^2} = \{x \in \smooth(\bR,M) \;:\; x(t) = \phi^2(x(t+2)) \}, \\
& 
\begin{aligned}
A_{\phi^2}: \scrL_{\phi^2} \rightarrow \bR, \quad
A_{\phi^2}(x) & = \textstyle -\int_0^2 x^*\theta_M - G_{\phi^2}(x(2)) \\ &
=\textstyle -\int_0^1 x^*\theta_M - G_\phi(x(1)) - \int_1^2 x^*\phi^*\theta_M - G_\phi(\phi(x(2))).
\end{aligned}
\end{align}
The $\phi^2$-twisted loop space admits an involution
\begin{equation} \label{eq:rotate-loop}
\rho: \scrL_{\phi^2} \rightarrow \scrL_{\phi^2}, \quad
(\rho x)(t) = \phi(x(t+1)),
\end{equation} %\phi^2(x(t+2))
which preserves the action functional. The fixed point set of $\rho$ is exactly $\scrL_\phi$, and
\begin{equation}
A_{\phi^2}\,|\,\scrL_\phi = 2  A_\phi.
\end{equation}
There is a corresponding action on families of almost complex structures, 
\begin{align} \label{eq:j-periodicity-2}
& \scrJ_{\phi^2} = \{ J = (J_t) \;:\; J_{t} = \phi^2_*(J_{t+2}) \}, \\
& \rho_*: \scrJ_{\phi^2} \longrightarrow \scrJ_{\phi^2}, \quad
(\rho_*J)_t = \phi_*J_{t+1},
\end{align}
whose fixed point set is $\scrJ_\phi$. 

To define $\mathit{HF}^*(\phi^2)$, one chooses a generic $J_{\phi^2} \in \scrJ_{\phi^2}$, and then repeats the previous construction, except of course that the periodicity condition in \eqref{eq:floer} must be replaced by one involving $(s,t+2)$. In general, the genericity requirement means that it is impossible to choose $J_{\phi^2}$ to be invariant under \eqref{eq:j-periodicity-2}, so that choice breaks the existing symmetry. More concretely, while the space $\mathit{CF}^*(\phi^2)$ carries an involution given by $\rho$, or equivalently by the action of $\phi$ on the fixed points of $\phi^2$, that action will not usually be compatible with the differential. However, there is an involution on Floer cohomology, which we denote by 
\begin{equation} \label{eq:coh-iota}
\iota: \mathit{HF}^*(\phi^2) \longrightarrow \mathit{HF}^*(\phi^2).
\end{equation}
It is induced by the composition
\begin{equation} \label{eq:involution-on-hf}
(\mathit{CF}^*(\phi^2), d_{J_{\phi^2}}) \stackrel{\htp}{\longrightarrow}
(\mathit{CF}^*(\phi^2), d_{\rho_*J_{\phi^2}}) \stackrel{\rho}{\iso} (\mathit{CF}^*(\phi^2), d_{J_{\phi^2}}).
\end{equation}
Here, the middle group is the cohomology of the Floer complex formed with respect to the family $\rho_*J_{\phi^2}$ of almost complex structures. That complex is isomorphic to that for $J_{\phi^2}$, by applying $\rho$, which is the second part of \eqref{eq:involution-on-hf}. The first part is a continuation map, which is a quasi-isomorphism relating Floer complexes for different choices of almost complex structures: it is unique up to chain homotopy, hence induces a canonical isomorphism of cohomology groups. One can check (based on concatenation properties of continuation maps) that \eqref{eq:coh-iota} is indeed an involution. This is not true of \eqref{eq:involution-on-hf}, whose square is in general only chain homotopic to the identity.

\subsection{Equivariant Floer cohomology\label{subsec:equi-define}}
To define equivariant Floer cohomology, one introduces a family of almost complex structures which interpolates between $J_{\phi^2}$ and $\rho_*J_{\phi^2}$, and then extends that to higher-dimensional families. We choose to carry out the entire process in a single step, using the classical Borel construction as a model, as in \cite{seidel-smith10}.

\begin{data} \label{th:eq-data}
For each $v \in S^\infty$ choose a $J_{\mathit{eq},v} \in \scrJ_{\phi^2}$. This should depend smoothly on $v$, and have the following properties:
\begin{align}
\label{eq:j-symmetry} & J_{\mathit{eq},-v} = \rho_* J_{\mathit{eq},v}, \\
\label{eq:j-critical} & J_{\mathit{eq},v} = J_{\phi^2} \quad \text{if $v$ lies in a neighbourhood of $v^{i,+}$, for any $i$}, \\
& J_{\mathit{eq},\tau(v)} = J_{\mathit{eq},v}. \label{eq:j-shift}
\end{align}
\end{data}

Suppose that $w: \bR \rightarrow S^\infty$ is a non-constant negative gradient flow line (of the function $f$, with a metric as in Data \ref{th:setup-metric}), representing a point $[w] \in \scrQ^{i,\sigma}$. Our choice associates to $w$ a family of almost complex structures, namely
\begin{equation} \label{eq:j-trick}
J_{s,t} = J_{\mathit{eq},w(s),t}.
\end{equation}
This family satisfies
\begin{align}
& J_{s,t} = J_{\phi^2,t} \quad \text{for $s \gg 0$}, \\
& J_{s,t} = \left\{
\begin{aligned}
& J_{\phi^2,t} \;\; \text{if $\sigma = +$} \\
& (\rho_*J_{\phi^2})_t = \phi_*J_{\phi^2,t+1} \;\; \text{if $\sigma = -$}
\end{aligned} \right. \quad
\text{for $s \ll 0$.}
\end{align}
Using $J_{s,t}$, we write down a Cauchy-Riemann equation:
\begin{equation} \label{eq:parametrized-equation}
\left\{
\begin{aligned}
& u: \bR^2 \longrightarrow M, \\
& u(s,t) = \phi^2(u(s,t+2)), \\
& \partial_s u + J_{s,t} \, \partial_t u = 0, \\
& \textstyle \lim_{s \rightarrow +\infty} u(s,t) = x, \\
& \textstyle \lim_{s \rightarrow -\infty} u(s,t) = 
\left\{
\begin{aligned}
& y \;\; \text{if $\sigma = +$}, \\
& \phi(y) \;\; \text{if $\sigma = -$.}  
\end{aligned}
\right.
\end{aligned}
\right.
\end{equation}
Here, the limits $x$ and $y$ are fixed points of $\phi^2$. Note that \eqref{eq:parametrized-equation} is not invariant under $s$-translation of $u$, since the almost complex structures are $s$-dependent. However, it is compatible with simultaneous translation of $w$ and $u$. After dividing out by such translations, we get a moduli space of pairs $[w,u]$, denoted by $\scrM_{\mathit{eq}}^{i,\sigma}(y,x)$, which comes with a forgetful map
\begin{equation} \label{eq:eq-space}
\scrM_{\mathit{eq}}^{i,\sigma}(y,x) \longrightarrow \scrQ^{i,\sigma}.
\end{equation}
Since $\scrQ^{i,\sigma}$ is an $(i-1)$-manifold, and the fibre of \eqref{eq:eq-space} is the space of solutions of \eqref{eq:parametrized-equation} for a choice of almost complex structure determined by $w$, $\scrM_{\mathit{eq}}^{i,\sigma}(y,x)$ is a moduli space of pseudo-holomorphic maps depending on $(i-1)$ auxiliary parameters. For generic choice of almost complex structures, this space will be regular. As before, it can have components of different dimensions, but the parity of the dimension satisfies
\begin{equation} \label{eq:dim-mod-2}
\mathrm{dim}\, \scrM_{\mathit{eq}}^{i,\sigma}(y,x) \equiv |y| - |x| + i-1 \;\; \text{mod } 2.
\end{equation}
Proving generic regularity requires a transversality argument of a familiar kind. The other, and more substantial, technical part of any Floer-type construction are compactness and gluing arguments. Temporarily postponing the discussion of how those arguments work out in our situation, we jump ahead to the outcome:

\begin{definition}
By counting isolated points in the parametrized moduli spaces, define (for each $i>0$ and sign $\sigma$) maps
\begin{equation} \label{eq:i-differential}
\begin{aligned}
& d_{\mathit{eq}}^{i,\sigma}: \mathit{CF}^*(\phi^2) \longrightarrow \mathit{CF}^{*+1-i}(\phi^2), \\
& d_{\mathit{eq}}^{i,\sigma}(x) = \textstyle \sum_y \# \scrM_{\mathit{eq}}^{i,\sigma}(y,x)\, y.
\end{aligned}
\end{equation}
Set $d_{\mathit{eq}}^i = d_{\mathit{eq}}^{i,+} + d_{\mathit{eq}}^{i,-}$, and use that to define the differential on \eqref{eq:equi-cf}, by the formula
\begin{equation} \label{eq:equi-d}
d_{\mathit{eq}} = d_{{\phi^2}} + \sum_{i \geq 1} h^i d_{\mathit{eq}}^i.
\end{equation}
\end{definition}
%
%\begin{remark}
%As the reader may have noticed, we use the letter $u$ for the (purely algebraic) formal parameter arising in equivariant cohomology, as well as for pseudo-holomorphic maps. Because of the rather different nature of the two notions, this will hopefully not lead to confusion.
%\end{remark}

The operations \eqref{eq:i-differential} satisfy a series of equations, one for each $i>0$:
\begin{align} \label{eq:plus-relation}
& d_{J_{\phi^2}} d_{\mathit{eq}}^{i,+} + d_{\mathit{eq}}^{i,+} d_{J_{\phi^2}} = 
\sum_{\substack{i_1+i_2=i \\ i_1,i_2>0}} d_{\mathit{eq}}^{i_1,+} d_{\mathit{eq}}^{i_2,+} + d_{\mathit{eq}}^{i_1,-} d_{\mathit{eq}}^{i_2,-}, \\
\label{eq:minus-relation}
& d_{J_{\phi^2}} d_{\mathit{eq}}^{i,-} + d_{\mathit{eq}}^{i,-} d_{J_{\phi^2}} = 
\sum_{\substack{i_1+i_2=i \\ i_1,i_2>0}} d_{\mathit{eq}}^{i_1,-} d_{\mathit{eq}}^{i_2,+} + d_{\mathit{eq}}^{i_1,+} d_{\mathit{eq}}^{i_2,-}.
\end{align}
These imply that
\begin{equation} \label{eq:i-differential-sum}
d_{J_{\phi^2}} d_{\mathit{eq}}^i + d_{\mathit{eq}}^i d_{J_{\phi^2}} =
\sum_{\substack{i_1+i_2=i \\ i_1,i_2>0}} d_{\mathit{eq}}^{i_1} d_{\mathit{eq}}^{i_2},
\end{equation}
which is precisely the condition needed to show that \eqref{eq:equi-d} squares to zero. As an immediate consequence of the formal structure of \eqref{eq:equi-d}, one gets the desired analogue of \eqref{eq:u-sequence}, a long exact sequence of $\bK[[h]]$-modules
\begin{equation} \label{eq:floer-u-sequence}
\cdots \rightarrow \mathit{HF}^{*-1}_{\mathit{eq}}(\phi^2) \stackrel{h}{\longrightarrow} \mathit{HF}^*_{\mathit{eq}}(\phi^2) \longrightarrow \mathit{HF}^*(\phi^2) \rightarrow \cdots
\end{equation}

To understand \eqref{eq:i-differential}, it is instructive to look at the first order term in $h$. Lemma \ref{th:stable-unstable} implies that each space $\scrQ^{1,\pm}$ consists of a single unparametrized flow line, which means that we are looking at the space of solutions of a single equation \eqref{eq:parametrized-equation}. This is known as a continuation map equation \cite{salamon-zehnder92}, and a count of its solutions gives rise to a chain map between Floer complexes. More specifically, for $\sigma = +$ we get an endomorphism
\begin{equation} \label{eq:plus-1-map}
d_{\mathit{eq}}^{1,+}: (\mathit{CF}^*(\phi^2),d_{J_{\phi^2}}) \longrightarrow
(\mathit{CF}^*(\phi^2),d_{J_{\phi^2}}).
\end{equation}
Because of the uniqueness of continuation maps up to chain homotopy \cite[Lemma 6.3]{salamon-zehnder92}, this map is homotopic to the identity. %In fact, one could have chosen the almost complex structures $J_{\mathit{eq},v}$ so that $J_{s,t}^+$ is constant in $s$ everywhere, and then \eqref{eq:plus-1-map} would be strictly equal to the identity; but that is not necessary for our purpose.
In the other case $\sigma = -$, the continuation map provides the quasi-isomorphism from \eqref{eq:involution-on-hf}, which means that $d_{\mathit{eq}}^{1,-}$ is a chain map inducing the involution $\iota$ on $\mathit{HF}^*(\phi^2)$. We have therefore shown the following:

\begin{lemma} \label{th:u-adic-spectral-sequence}
Consider the spectral sequence associated to the $h$-adic filtration of $\mathit{CF}^*_{\mathit{eq}}(\phi^2)$. The $E_1$ page is $\mathit{HF}^*(\phi^2)[[h]]$, and the differential on it is $h(\mathit{id} + \iota)$. Hence, the $E_2$ page is $H^*(\bZ/2;\mathit{HF}^*(\phi^2))$. \qed
\end{lemma}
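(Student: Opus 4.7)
The plan is to apply the standard machinery of the spectral sequence of a complete decreasing filtration to the differential $d_{\mathit{eq}}$ from \eqref{eq:equi-d}, and read off the first two pages using the identifications of $d_{\mathit{eq}}^{1,\pm}$ established in the paragraph preceding the lemma. The lemma itself is essentially bookkeeping: all the substantive Floer-theoretic content has already been packaged into the formulas \eqref{eq:equi-d}, \eqref{eq:plus-relation}, \eqref{eq:minus-relation} and the identification of the first-order terms with continuation maps; this is presumably why the author writes $\qed$ after the statement.

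Concretely, equip $\mathit{CF}^*_{\mathit{eq}}(\phi^2) = \mathit{CF}^*(\phi^2)[[h]]$ with the filtration $F^p = h^p\,\mathit{CF}^*(\phi^2)[[h]]$. Because every term in \eqref{eq:equi-d} other than $d_{J_{\phi^2}}$ carries a positive power of $h$, the filtration is preserved and the induced differential on each associated graded piece $F^p/F^{p+1} \cong \mathit{CF}^*(\phi^2)$ is exactly $d_{J_{\phi^2}}$. Taking cohomology level by level gives $E_1 \cong \mathit{HF}^*(\phi^2)[[h]]$ as a $\bK[[h]]$-module, where the power of $h$ records the filtration degree.

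Next I would identify the $d_1$ differential. By the general construction of the spectral sequence, $d_1$ on $E_1$ is induced by the next piece of $d_{\mathit{eq}}$, namely $h\,d_{\mathit{eq}}^1 = h(d_{\mathit{eq}}^{1,+} + d_{\mathit{eq}}^{1,-})$. The $i=1$ cases of \eqref{eq:plus-relation} and \eqref{eq:minus-relation} assert precisely that $d_{\mathit{eq}}^{1,+}$ and $d_{\mathit{eq}}^{1,-}$ anticommute with $d_{J_{\phi^2}}$, so both are genuine chain endomorphisms of $(\mathit{CF}^*(\phi^2), d_{J_{\phi^2}})$ and descend to maps on $\mathit{HF}^*(\phi^2)$. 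From the discussion before the lemma, $d_{\mathit{eq}}^{1,+}$ is a continuation map from $J_{\phi^2}$ to itself, so by uniqueness of continuation maps up to chain homotopy it induces $\mathit{id}$ on $\mathit{HF}^*(\phi^2)$; and $d_{\mathit{eq}}^{1,-}$ realises the involution $\iota$ of \eqref{eq:coh-iota}, via the composition \eqref{eq:involution-on-hf}. Hence $d_1 = h(\mathit{id}+\iota)$ on $\mathit{HF}^*(\phi^2)[[h]]$.

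Finally, by definition $E_2$ is the cohomology of $(\mathit{HF}^*(\phi^2)[[h]],\, h(\mathit{id}+\iota))$, which by comparison with \eqref{eq:c-complex} is exactly $C^*(\bZ/2;\mathit{HF}^*(\phi^2))$, whose cohomology is $H^*(\bZ/2;\mathit{HF}^*(\phi^2))$. No step here is genuinely an obstacle: the two identifications $d_{\mathit{eq}}^{1,+} \simeq \mathit{id}$ and $[d_{\mathit{eq}}^{1,-}] = \iota$ are the only nontrivial inputs, and both have already been extracted from the definitions of continuation maps and of $\iota$ in the preceding subsection.
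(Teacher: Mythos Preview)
Your proposal is correct and follows essentially the same approach as the paper: the paper places a $\qed$ immediately after the statement because the discussion preceding it (identifying $d_{\mathit{eq}}^{1,+}$ as a continuation map homotopic to the identity, and $d_{\mathit{eq}}^{1,-}$ as a chain map inducing $\iota$) already contains all the substantive input, leaving only the standard spectral-sequence bookkeeping you spell out. Your write-up simply makes that bookkeeping explicit.
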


%Here, the term ``spectral sequence'' should be treated with a little caution, since we are dealing with a filtration of a $\bZ/2$-graded complex. 
The edge homomorphisms of the spectral sequence are canonical maps from $\mathit{HF}^*_{\mathit{eq}}(\phi^2)$ to the leftmost column $E_r^{0*}$ of each page ($r \geq 1$; the existence of these maps is independent of convergence issues for the spectral sequence). Specializing to $r = 2$, we get a map
\begin{equation}
\mathit{HF}^*_{\mathit{eq}}(\phi^2) \longrightarrow H^0(\bZ/2;\mathit{HF}^*(\phi^2)) = \mathit{HF}^*(\phi^2)^{\bZ/2}.
\end{equation}
By construction, this is a refinement of the forgetful map in \eqref{eq:floer-u-sequence}. This shows that the forgetful map lands in the $\bZ/2$-invariant part of $\mathit{HF}^*(\phi^2)$, a fact we have previously used to derive \eqref{eq:smith-2} (the language of spectral sequences is not really necessary in order to arrive at this conclusion; one can readily translate the argument into a more elementary form).

Let's turn to the more technical aspects, starting with transversality. Standard transversality arguments (compare e.g.\ \cite[Proposition 6.7.7]{mcduff-salamon}) suffice to prove the regularity of $\scrM_{\mathit{eq}}^{i,\sigma}(y,x)$ except at constant solutions, which have to be treated separately. The linearization of \eqref{eq:parametrized-equation} at a constant solution $u(s,t) = x$ is the operator
\begin{equation} \label{eq:constant-linearization}
D_u: \scrE^1 \rightarrow \scrE^0, \quad D_u(\xi) = \partial_s \xi + J_{s,t}\, \partial_t \xi.
\end{equation}
Here, the domain $\scrE^1$ is the space of maps $\xi: \bR^2 \rightarrow TM_x$ which are: locally $W^{k,p}$; globally $W^{k,p}$ when restricted to any strip $\bR \times (t_0,t_1)$; and satisfy $\xi(s,t) = D\phi_x^2(\xi(s,t+2))$. The range $\scrE^0$ is the same space with $W^{k-1,p}$ regularity. Because $1$ is not an eigenvalue of $D\phi_x^2$, $D_u$ is an elliptic operator. By (an easy special case of) the spectral flow formula, it has index $0$. Let's suppose for concreteness that $(k,p) = (2,2)$. One has (with respect to the metrics on $TM_x$ induced by $J_{s,t}$)
\begin{equation}\textstyle \
\int_{\bR \times [0,1]} \half |D_u \xi|^2 + \int_{\bR \times [0,1]} \xi^*\omega_{M,x} = \int_{\bR \times [0,1]} \half (|\partial_s \xi|^2 + |\partial_t \xi|^2). \label{eq:zero-energy-0}
\end{equation}
The second term on the left hand side integrates over the pullback of the constant two-form $\omega_{M,x}$ on $TM_x$, and one can show by a Stokes argument that it vanishes. With this in mind, \eqref{eq:zero-energy-0} implies that $D_u$ is injective, and therefore invertible. This shows that constant solutions of \eqref{eq:parametrized-equation} are always regular in the ordinary sense, hence a fortiori also regular in the parametrized sense.

\begin{addendum} \label{th:low-energy-differential}
For \eqref{eq:parametrized-equation} to have solutions, we must have
\begin{equation}
A_{\phi^2}(x) \leq A_{\phi^2}(y).
\end{equation}
More precisely: if equality holds, then the only solutions are constant ones (which means that necessarily $x = y$); whereas if the inequality is strict, all solutions are non-constant. Since the constant solutions exist for any choice of $J_{s,t}$, they form isolated points in $\scrM^{i,\sigma}_{\mathit{eq}}(y,x)$ only if $i = 1$. Hence,
\begin{equation} \label{eq:0-energy-term}
d_{\mathit{eq}} = h(\mathit{id} + \rho) + (\text{\it terms which increase the action}).
\end{equation}
A suitable filtration by action yields a spectral sequence converging to $\mathit{HF}^*_{\mathit{eq}}(\phi^2)$, whose $E_1$ page is $H^*(\bZ/2; \mathit{CF}^*(\phi^2))$, the group cohomology for the ``naive'' $\bZ/2$-action $\rho$ on $\mathit{CF}^*(\phi^2)$ (convergence of this spectral sequence is automatic, because the filtration is a finite one).

Even more interesting is the Tate version of the same spectral sequence, which converges to $\mathit{HF}^*_{\mathit{eq}}(\phi^2) \otimes_{\bK[[h]]} \bK((h))$. Let's divide $\mathit{CF}^*(\phi^2)$ into two pieces, one generated by  the fixed points of $\phi$, and the other by the points that have period exactly two. The Tate cohomology of the second summand vanishes by Example \ref{th:tate-0}. Hence, the $E_1$ page of this spectral sequence can be written as
\begin{equation} \label{eq:tate-e1page}
\hat{H}^*(\bZ/2; \mathit{CF}^*(\phi^2)) \iso \mathit{CF}^*(\phi)((h)).
\end{equation}
This isomorphism does not preserve the $\bZ/2$-grading, since the parity of $x$ as a fixed point of $\phi$ does not determine its counterpart for $\phi^2$ (see Section \ref{sec:linear} for more discussion of this). Similarly, a priori there appears to be no relation between the higher order differentials in the spectral sequence, acting on the left hand side of \eqref{eq:tate-e1page}, and the Floer differential on the right hand side. However, our proof of Theorem \ref{th:main} will show that they are related (but not in a way that's easy to describe explicitly).
% Our proof of Theorem \ref{th:main} will show that the higher differentials in this spectral sequence agree with the Floer differential for $\phi$.
\end{addendum}

Our final topic is compactness, where the argument is a version of that underlying the composition theorem for continuation maps \cite[Lemma 6.4]{salamon-zehnder92}. Suppose that we have a sequence $[w^k, u^k] \in \scrM^{i,\sigma}_{\mathit{eq}}(y,x)$, such that the Morse-theoretic gradient flow lines $[w^k]$ converge to a point of the compactification \eqref{eq:stratification}. Let's denote the components of the limit point by $([w^\infty_1],\dots,[w^\infty_d])$. In more geometric terms, this limit would be the broken Morse trajectory consisting of
\begin{equation} \label{eq:broken-morse}
(\sigma_2 \cdots \sigma_d) \tau^{i_2+\cdots+i_d}(w^\infty_1),\,\dots, \, (\sigma_{d-1} \sigma_d) \tau^{i_{d-1}+i_d}(w^\infty_{d-2}),\, \sigma_d \tau^{i_d}(w_{d-1}^\infty), \; w^\infty_d
\end{equation}
(here, the $(\pm)$ sign denotes the $\bZ/2$-action on $S^\infty$, and $\tau$ the shift; the special case $d = 1$ corresponds to convergence inside $\scrQ^{i,\sigma}$ itself). Even more explicitly, for each component $[w_j^\infty]$ of the limit, we have a sequence $s^k_j \in \bR$ such that the reparametrized gradient flow lines $\tilde{w}^k_j = w^k(s-s^k_j)$ satisfy
\begin{equation}
\tilde{w}^k_j(s) \longrightarrow (\sigma_{j+1} \cdots \sigma_d) \tau^{i_{j+1} + \cdots + i_d}(w_j^\infty(s))
\end{equation}
(uniformly on compact subsets). Suppose first that $\sigma_{j+1} \cdots \sigma_d = +$. If we consider the corresponding sequence of reparametrized solutions $\tilde{u}^k_j(s,t) = u(s - s^k_j,t)$, they satisfy an equation
\begin{equation}
\partial_s \tilde{u}^k_j + \tilde{J}_{j,s,t}^k \partial_t \tilde{u}^k_j = 0,
\end{equation}
where $\tilde{J}_{j,s,t}^k = J_{\mathit{eq},\tilde{w}^k_j(s),t}$ converges (on compact subsets) to the family of almost complex structures defining the Cauchy-Riemann equation \eqref{eq:parametrized-equation} associated to $w_j^\infty$. Bubbling being ruled out by the exactness assumptions, it follows that a subsequence of the $\tilde{u}^k_j$ converges to a $u_j^\infty$ such that $[w_j^\infty,u_j^\infty] \in \scrM_{\mathit{eq}}^{i_k,\sigma_k}(y_j,x_j)$ (for some limits $y_j$ and $x_j$). In the other case $\sigma_{j+1} \cdots \sigma_d = -$, the same convergence result applies up to an involution (replacing $\tilde{J}_{j,s,t}^k$ by $(\rho_*\tilde{J}_{j,s}^k)_t$, and $\tilde{u}^k_j$ by $\rho(\tilde{u}^k_j)$.

In general, the components $[w^\infty_j,u^\infty_j]$ obtained in this way do not characterize the limiting behaviour completely. There will be further components, which are ordinary Floer trajectories \eqref{eq:floer}, appearing either before the $j = 1$ component, after the $j = d$ component, or in between any two such components. After including such Floer trajectories, one obtains the desired compactification $\bar{\scrM}_{\mathit{eq}}^{i,\sigma}(y,x)$, to which a parametrized version of the Floer-theoretic gluing theory can be applied (see \cite[Section 4.4]{schwarz95} or \cite[Section 3.3]{salamon} for the gluing theorem; the parametrized version, where families of Cauchy-Riemann equations are considered, appeared first in the proof of uniqueness up to homotopy of continuation maps, \cite[Lemma 6.3]{salamon-zehnder92} or \cite[Lemma 3.12]{salamon}).

The compactness theorem (together with transversality) implies that $\scrM_{\mathit{eq}}^{i,\sigma}(y,x)$ has only finitely many isolated points. The other relevant special case is that of a sequence of points $[w^k,u^k]$ which lie in the one-dimensional part of $\scrM_{\mathit{eq}}^{i,\sigma}(y,x)$. Here, the only possible limits in $\bar\scrM_{\mathit{eq}}^{i,\sigma}(y,x) \setminus \scrM_{\mathit{eq}}^{i,\sigma}(y,x)$ are of the following kinds. One can have convergence in $\scrQ^{i,\sigma}$ and exactly one Floer trajectory appearing, which accounts for the terms on the left-hand side of \eqref{eq:plus-relation}, \eqref{eq:minus-relation}. Or else, one can have convergence to a codimension one stratum of $\bar\scrQ^{i,\sigma}$, which means $d = 2$ in \eqref{eq:broken-morse}, with no Floer trajectories appearing. In the latter case, the two pieces of the limit have the form
\begin{equation}
[w_1^\infty,u_1^\infty] \in \scrM_{\mathit{eq}}^{i_1,\sigma_1}(y,z), \;\; [w_2^\infty,u_2^\infty] \in \scrM_{\mathit{eq}}^{i_2,\sigma_2}(z,x)
\end{equation}
for $i_1+i_2 = i$ and $\sigma_1\sigma_2 = \sigma$. Moreover, they must be isolated points of their respective moduli spaces. The resulting contributions (for the two possible choices of $\sigma_1$, $\sigma_2$) make up the right hand side of \eqref{eq:plus-relation}, \eqref{eq:minus-relation}.

\subsection{The equivariant product\label{subsec:product}}
We will work with a specific model for the pair-of-pants (the three-punctured sphere) $S$, as the double cover
\begin{equation} \label{eq:s-projection}
\pi: S \longrightarrow \bR \times S^1 = \bR \times \bR/\bZ
\end{equation}
branched over the point $(0,0) \in \bR \times S^1$. To fully specify \eqref{eq:s-projection}, we should say that the covering must be trivial over the end $s > 0$ of $\bR \times S^1$ (hence nontrivial over the other end $s < 0$). Denote the covering involution by $\gamma: S \rightarrow S$. By assumption, we can find two embeddings
\begin{equation} \label{eq:epsilon-in}
\begin{aligned}
& \delta^{\pm}: [1,\infty) \times S^1 \longrightarrow S, \\
& \pi(\delta^{\pm}(s,t)) = (s,t), \quad
\gamma(\delta^{\pm}(s,t)) = \delta^{\mp}(s,t).
\end{aligned}
\end{equation}
%whose images are disjoint, and together cover $\pi^{-1}([1,\infty) \times S^1) \subset S$. 
Similarly, there is an embedding
\begin{equation} \label{eq:epsilon-out}
\begin{aligned}
& \epsilon^+: (-\infty,-1] \times \bR/2\bZ \longrightarrow S, \\
&
\pi(\epsilon^+(s,t)) = (s,t), \quad
\gamma(\epsilon^+(s,t)) = \epsilon^+(s,t+1),
\end{aligned}
\end{equation}
%whose image is $\pi^{-1}((-\infty,-1] \times S^1) \subset S$. 
For symmetry reasons, we also consider $\epsilon^-(s,t) = \epsilon^+(s,t+1)$, which gives a different parametrization of the same end. The embeddings \eqref{eq:epsilon-in}, \eqref{eq:epsilon-out} are not quite unique (one could exchange $\delta^+$ with $\delta^-$, and correspondingly for the $\epsilon$'s), but we assume that a choice has made been made once and for all.

% zeta^2 = x(x-1) = x^2-x
% is branched at x = 0,1
% equivalently x^2\zeta^2 = x^2 - x => \zeta^2 = 1- 1/x 
\begin{remark} \label{th:coordinates}
If one prefers explicit coordinates, one can set
\begin{align}
& S = \big\{(s,t,\zeta) \in \bR \times S^1 \times \bC \;:\; \zeta^2 = 1 - \exp(-2\pi(s+it))\big\}, \\
& \gamma(s,t,\zeta) = (s,t,-\zeta).
\end{align}
Then
\begin{align}
\label{eq:explicit-delta} & \delta^{\pm}(s,t) = \big(s,t,\pm \sqrt{1-\exp(-2\pi(s+it))} \big), \\
\label{eq:explicit-epsilon} & \epsilon^{\pm}(s,t) = \big(s,t, \pm e^{-\pi (s+it)} \sqrt{\exp(2 \pi (s+it)) - 1} \big).
\end{align}
In \eqref{eq:explicit-delta}, we have arbitrarily chosen a branch of the complex square root on the
open unit disc around $1$; and in \eqref{eq:explicit-epsilon}, the same for $-1$.
\end{remark}

Take the covering $\bR^2 \rightarrow \bR \times S^1$, and pull it back via \eqref{eq:s-projection}. The outcome is a covering $\tilde{S} \rightarrow S$, whose covering group is generated by an automorphism $\theta$. Then, \eqref{eq:s-projection} lifts to a double branched covering
\begin{equation} \label{eq:tilde-pi-map}
\tilde{\pi}: \tilde{S} \longrightarrow \bR^2,
\end{equation}
with covering involution $\tilde{\gamma}$ which commutes with $\theta$. The previously defined maps $\delta^\pm$, $\epsilon^\pm$ admit lifts
\begin{equation} \label{eq:tilde-delta-properties}
\begin{aligned}
& \tilde{\delta}^{\pm}: [1,\infty) \times \bR \longrightarrow \tilde{S}, \\ &
\tilde{\pi}(\tilde{\delta}^{\pm}(s,t)) = (s,t), \quad
\theta(\tilde{\delta}^{\pm}(s,t)) = \tilde{\delta}^{\pm}(s,t+1), \quad
\tilde{\gamma}(\tilde{\delta}^{\pm}(s,t)) = \tilde{\delta}^{\mp}(s,t),
\end{aligned}
\end{equation}
and
\begin{equation} \label{eq:tilde-epsilon-properties}
\begin{aligned}
& \tilde{\epsilon}^{\pm}: (-\infty,-1] \times \bR \longrightarrow \tilde{S}, \\
& \tilde{\pi}(\tilde{\epsilon}^{\pm}(s,t)) = (s,t), \quad
\theta(\tilde{\epsilon}^{\pm}(s,t)) = \tilde{\epsilon}^{\mp}(s,t+1), \quad
\tilde{\gamma}(\tilde{\epsilon}^{\pm}(s,t)) = \tilde{\epsilon}^{\mp}(s,t).
\end{aligned}
\end{equation}
Note that $\tilde{\epsilon}^+$ and $\tilde{\epsilon}^-$ have disjoint images, which together cover the preimage of the end \eqref{eq:epsilon-out} under $\tilde{\pi}$.
% \theta\tilde{epsilon}^+(s,t) = (s,t+1, e^{-...}) = \tilde{\theta}^- !
% \theta\tilde{epsilon}^-(s,t) = (s,t+2,...) = 

\begin{remark}
In the model from Remark \ref{th:coordinates}, 
\begin{align}
& \tilde{S} = \big\{(s,t,\zeta) \in \bR^2 \times \bC \;:\; \zeta^2 = 1 - \exp(-2\pi(s+it))\big\}, \\
& \theta(s,t,\zeta) = (s,t+1,\zeta), \\
& \tilde\gamma(s,t,\zeta) = (s,t,-\zeta).
\end{align}
The maps \eqref{eq:tilde-delta-properties} and \eqref{eq:tilde-epsilon-properties} are defined by the same formulae \eqref{eq:explicit-delta}, \eqref{eq:explicit-epsilon} as before.
\end{remark}

\begin{data} \label{th:j-prod-data}
For each $v \in S^\infty$ and $s < 1$, choose almost complex structures $J_{\mathit{left},v,s} \in \scrJ_{\phi^2}$ with the following properties:
\begin{align}
& J_{\mathit{left},-v,s} = \rho_* J_{\mathit{left},v,s}, \label{eq:jeq-1} \\
& J_{\mathit{left},\tau(v),s} = J_{\mathit{left},v,s}, \label{eq:left-tau} \\
& J_{\mathit{left},v,s} = J_{\mathit{eq},v} \quad \text{if $s \leq -2$}, \label{eq:prod-eq} \\
& J_{\mathit{left},v,s} \in \scrJ_\phi \quad \text{if $s \geq -1$}. \label{eq:minus1-bound}
\end{align}
In addition, for $v \in S^\infty$ and $s>-1$, choose $J_{\mathit{right},v,s}^{\pm} \in \scrJ_{\phi}$, such that:
\begin{align}
& J_{\mathit{right},-v,s}^{\pm} = J_{\mathit{right},v,s}^{\mp}, \label{eq:right-swap} \\
& J_{\mathit{right},\tau(v),s}^{\pm} = J_{\mathit{right},v,s}^{\pm}, \label{eq:right-tau} \\
& J_{\mathit{right},v,s}^{\pm} = J_\phi \quad \text{ if $s \geq 2$}, \label{eq:2-bound} \\
& J_{\mathit{right},v,s}^{\pm} = J_{\mathit{left},v,s} \quad \text{if $s \leq 1$}. \label{eq:plus1-bound}
\end{align}
\end{data}

Let $w: \bR \rightarrow S^\infty$ be a negative gradient trajectory of $f$ which corresponds to a point in $\scrP^{i,\sigma}$, meaning that it connects $v^{i,\sigma}$ to $v^{0,+}$. To this, we associate a family $J_z$ of almost complex structures parametrized by $z \in \tilde{S}$, as follows:
\begin{align} 
\label{eq:middleregion} 
& \text{if $\tilde{\pi}(z) = (s,t)$ with $-1 \leq s \leq 1$, set } 
J_z = J_{\mathit{left},w(s),s,t} = J_{\mathit{right},w(s),s,t}^{\pm}\,; \\
\label{eq:left+region} 
& \text{if $z = \tilde{\epsilon}^+(s,t)$, set } 
J_z = J_{\mathit{left},w(s),s,t}\,; \\
\label{eq:left-region} 
& \text{if $z = \tilde{\epsilon}^-(s,t)$, set } 
J_z = J_{\mathit{left},-w(s),s,t}\,; \\
\label{eq:right+region} 
& \text{if $z = \tilde{\delta}^+(s,t)$, set }
J_z = J_{\mathit{right},w(s),s,t}^+\,; \\
\label{eq:right-region}
& \text{if $z = \tilde{\delta}^-(s,t)$, set }
J_z = J_{\mathit{right},w(s),s,t}^-\,.
\end{align}
This makes sense thanks to \eqref{eq:minus1-bound} and \eqref{eq:plus1-bound}, which imply that along $s = \pm 1$, \eqref{eq:middleregion} matches up smoothly with the other prescriptions. 

\begin{lemma} \label{th:st}The family $(J_z)$ has the following properties:
\begin{align}
& \label{eq:twisted-j-period} J_z = \phi_*J_{\theta(z)}, \\
& \label{eq:delta-j} J_{\tilde{\delta}^{\pm}(s,t)} = J_{\phi,t} \quad \text{for $s \geq 2$}, \\
\label{eq:epsilon-j-plus} 
& J_{\tilde{\epsilon}^{\sigma}(s,t)} = J_{\phi^2,t} \quad \text{for $s \ll 0$,} \\
\label{eq:epsilon-j-minus}
& J_{\tilde{\epsilon}^{-\sigma}(s,t)} = (\rho_*J_{\phi^2})_t \quad \text{for $s \ll 0$.}
\end{align}
\end{lemma}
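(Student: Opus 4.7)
The proof will be a direct verification, checking each of the four properties by unpacking the definitions \eqref{eq:middleregion}--\eqref{eq:right-region} on the relevant regions of $\tilde{S}$, and then applying the symmetry and asymptotic conditions in Data \ref{th:j-prod-data}. So the plan is largely bookkeeping; the only slightly delicate issue is the first property, which I discuss below.

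For the periodicity \eqref{eq:twisted-j-period}, I would work region by region using \eqref{eq:tilde-delta-properties} and \eqref{eq:tilde-epsilon-properties}. On $\tilde{\delta}^{\pm}$ the map $\theta$ just shifts $t \mapsto t+1$, and on that region $J_z = J_{\mathit{right},w(s),s,t}^{\pm}$ with $J_{\mathit{right},v,s}^\pm \in \scrJ_\phi$; since families in $\scrJ_\phi$ satisfy $J_t = \phi_* J_{t+1}$, the claim is immediate. On the two sheets $\tilde{\epsilon}^{\pm}$ the map $\theta$ swaps the sheets and shifts $t \mapsto t+1$, so the check becomes: $J_{\mathit{left},w(s),s,t} \stackrel{?}{=} \phi_* J_{\mathit{left},-w(s),s,t+1}$. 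Apply \eqref{eq:jeq-1} to rewrite the right hand side as $\phi_* (\rho_* J_{\mathit{left},w(s),s})_{t+1} = \phi_*^2 J_{\mathit{left},w(s),s,t+2}$, and then use that $J_{\mathit{left},w(s),s} \in \scrJ_{\phi^2}$ to collapse this back to $J_{\mathit{left},w(s),s,t}$ via \eqref{eq:j-periodicity-2}. The same manipulation works in reverse on the other sheet. Finally, on the middle region $\tilde{\pi}^{-1}(\{-1 \leq s \leq 1\})$ the $\theta$-action is not directly visible on the coordinates $(s,t)$ provided by $\tilde{\pi}$, but that region is covered by $\tilde{\delta}^\pm$ extended to $s \geq -1$ via \eqref{eq:plus1-bound}, so one reduces to the argument already given.

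For \eqref{eq:delta-j}, when $s \geq 2$ I apply \eqref{eq:2-bound}: the almost complex structure $J_{\mathit{right},v,s}^\pm$ becomes $v$-independent and equal to $J_\phi$, so the factor $w(s)$ drops out and we get $J_{\phi,t}$. For \eqref{eq:epsilon-j-plus} and \eqref{eq:epsilon-j-minus}, the point is that as $s \to -\infty$ the trajectory $w$ converges to $v^{i,\sigma}$, hence $-w$ converges to $v^{i,-\sigma}$; combined with \eqref{eq:prod-eq}, which says $J_{\mathit{left},v,s}=J_{\mathit{eq},v}$ when $s$ is sufficiently negative, everything reduces to the values of $J_{\mathit{eq}}$ near the critical points $v^{i,\pm}$. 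Near $v^{i,+}$ we have $J_{\mathit{eq}} = J_{\phi^2}$ by \eqref{eq:j-critical}; near $v^{i,-}$ we use \eqref{eq:j-symmetry} to conclude $J_{\mathit{eq}} = \rho_* J_{\phi^2}$. Splitting into the four cases $(\sigma, \text{sheet}) \in \{+,-\} \times \{+,-\}$ and matching signs with \eqref{eq:left+region}--\eqref{eq:left-region} gives the two asymptotic statements.

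The only real source of potential confusion, and the place where I would be most careful, is the coexistence of three different sign conventions: the sign $\sigma$ of the Morse trajectory, the $\pm$ labeling of the two preimages of an end under $\tilde{\pi}$, and the swap induced by $\theta$ between those preimages on the $\tilde{\epsilon}$-end. The computation above shows that these fit together consistently thanks to the compatibility built into Data \ref{th:j-prod-data} (particularly \eqref{eq:jeq-1} and \eqref{eq:right-swap}); making this fit is precisely why the conditions in Data \ref{th:j-prod-data} are formulated the way they are.
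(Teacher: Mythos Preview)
Your proposal is correct and follows the same region-by-region verification as the paper. Two small simplifications worth noting: in the middle region the $\theta$-action \emph{is} directly visible via $\tilde{\pi}(\theta(z)) = (s,t+1)$ (since $\theta$ covers the deck transformation of $\bR^2 \to \bR \times S^1$), so you can argue there immediately from $J_{\mathit{left},v,s} \in \scrJ_\phi$ for $s \geq -1$ without any talk of ``extending'' $\tilde{\delta}^\pm$ across the branch point; and the paper obtains \eqref{eq:epsilon-j-minus} by combining \eqref{eq:epsilon-j-plus} with the already-proved periodicity \eqref{eq:twisted-j-period}, which avoids your four-case split.
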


\begin{proof}
The verification of \eqref{eq:twisted-j-period} breaks up into the following cases:
\begin{align}
& \text{if $\tilde{\pi}(z) = (s,t)$ with $-1 \leq s \leq 1$, }
\phi_*J_{\theta(z)} = \phi_*J_{\mathit{right},w(s),s,t+1}^{\pm} = J_{\mathit{right},w(s),s,t}^{\pm} \label{eq:verify-1}\,; \\
\label{eq:verify-2}
& \parbox{35em}{%\linespread{1.2}
if $z = \tilde{\epsilon}^\pm(s,t)$, \newline
$\phi_*J_{\theta\tilde{\epsilon}^\pm(s,t)} = \phi_*J_{\tilde{\epsilon}^{\mp}(s,t+1)} = \phi_*J_{\mathit{left},\mp w(s),s,t+1} = (\rho_*J_{\mathit{left}, \mp w(s),s})_t = J_{\mathit{left},\pm w(s),s,t}$;} \\
& \text{if $z = \tilde{\delta}^{\pm}(s,t)$, }
\phi_*J_{\theta\tilde{\delta}^{\pm}(s,t)} = \phi_*J_{\tilde{\delta}^{\pm}(s,t+1)} = \phi_*J_{\mathit{right},w(s),s,t+1}^{\pm} =
J_{\mathit{right},w(s),s,t}^{\pm}. 
\label{eq:verify-3}
\end{align}
Here, \eqref{eq:verify-1} and \eqref{eq:verify-3} use the fact that $J_{\mathit{right},v,s}^{\pm} \in \scrJ_\phi$, while \eqref{eq:verify-2} uses \eqref{eq:jeq-1}. Next, \eqref{eq:delta-j} is a direct consequence of \eqref{eq:2-bound}. As for \eqref{eq:epsilon-j-plus}, note that for $s \ll 0$, $w(s)$ is close to $v^{i,\sigma}$, hence $\sigma w(s)$ is close to $v^{i,+}$. Using \eqref{eq:prod-eq}, \eqref{eq:j-shift} and \eqref{eq:j-critical}, one therefore gets
\begin{equation}
J_{\tilde{\epsilon}^{\sigma}(s,t)} = J_{\mathit{left},\sigma w(s),s,t} = J_{\mathit{eq},\sigma w(s), t} = J_{\phi^2,t}.
\end{equation}
The final property \eqref{eq:epsilon-j-minus} follows from \eqref{eq:epsilon-j-plus} and \eqref{eq:twisted-j-period}.
\end{proof}

Given any family of almost complex structures $(J_z)$ satisfying the properties from Lemma \ref{th:st}, one can consider the pair-of-pants product equation
\begin{equation} \label{eq:pants-equation}
\left\{
\begin{aligned}
& u: \tilde{S} \longrightarrow M, \\
& u(z) = \phi(u(\theta(z))), \\
& du \circ j = J_z \circ du, \\
& \textstyle \lim_{s \rightarrow -\infty} u({\tilde\epsilon}^\sigma(s,t)) = y, \\
& \textstyle \lim_{s \rightarrow +\infty} u({\tilde\delta}^+(s,t)) = x^+, \\
& \textstyle \lim_{s \rightarrow +\infty} u(\tilde{\delta}^-(s,t)) = x^-.
\end{aligned}
\right.
\end{equation}
Here, $j$ is the complex structure on $S$, $y$ is a fixed point of $\phi^2$, and the $x^\pm$ are fixed points of $\phi$. Note that
\begin{equation} \label{eq:u-sigma}
\phi(u(\tilde\epsilon^{\sigma}(s,t+1))) = \phi(u(\theta(\tilde\epsilon^{-\sigma}(s,t)))) = u(\epsilon^{-\sigma}(s,t)).
\end{equation}
In particular, one also has
\begin{equation}
\textstyle \lim_{s \rightarrow -\infty} u({\tilde\epsilon}^{-\sigma}(s,t)) = \phi(y).
\end{equation}

Unfortunately, transversality fails for solutions of \eqref{eq:pants-equation}. The culprit is the constant map $u(z) = x$, where $x$ is a fixed point of $\phi$. This is a solution of \eqref{eq:pants-equation} for any choice of $J_z$. Unlike the constant solutions of \eqref{eq:parametrized-equation}, these ones may have negative virtual dimension (we will discuss the relevant index theory in more detail later on, see Lemma \ref{th:index-theory}), hence won't be regular in general. While one could remedy this by applying virtual perturbation theory, we prefer the older approach using an explicit inhomogeneous term.

\begin{data} \label{th:inhomogeneous-data} 
Denote by $\scrH_{\phi}$ the space of all functions $H = H_t(x): \bR \times M \rightarrow \bR$ which vanish near $\partial M$, and which satisfy $H_t = \phi_*H_{t+1}$, meaning that
\begin{equation}
H_t(x) = H_{t+1}(\phi^{-1}(x)).
\end{equation}
Choose a family $H_s \in \scrH_{\phi}$ depending on another parameter $s \in \bR$, and whose support in $s$-direction lies inside the interval $(1,2)$. Write $X_{s,t}$ for the Hamiltonian vector field of $H_{s,t}$.
\end{data}

This choice equips the surface $\tilde{S}$ with an inhomogeneous term $Y$, which is a one-form on $\tilde{S}$ with values in Hamiltonian vector fields on $M$. Namely, $Y$ vanishes outside the image of $\tilde{\delta}^{\pm}$, and satisfies
\begin{equation}
(\tilde{\delta}^{\pm})^* Y = X_{s,t} \otimes dt.
\end{equation}
Note that by definition,
\begin{align}
& Y = \phi_* (\theta^*Y), \\
& Y = \tilde{\gamma}^*Y. \label{eq:gamma-y}
\end{align}
Given this, we perturb \eqref{eq:pants-equation} to an inhomogeneous Cauchy-Riemann equation
\begin{equation} \label{eq:perturbed-equation}
(du - Y_z) \circ j = J_z \circ (du - Y_z). 
\end{equation}
More concretely, this means that $u \circ \tilde{\delta}^{\pm}: [1,\infty) \times \bR \rightarrow M$ are solutions of 
\begin{equation} \label{eq:inhomogeneous-pullback}
\partial_s (u \circ \tilde{\delta}^{\pm}) + J_{\tilde{\delta}^{\pm}(s,t)}\big(
\partial_t (u \circ \tilde{\delta}^{\pm}) - X_{s,t}\big) = 0,
\end{equation}
while over the rest of the Riemann surface the equation remains as before. We should explain how this solves the transversality problem mentioned above. Note that inside the region $s \in (1,2)$, one can vary the almost complex structures 
\begin{align}
& J_{\tilde{\delta}^+(s,t)} = J_{\mathit{right},w(s),s,t}^+, \\
& J_{\tilde{\delta}^-(s,t)} = J_{\mathit{right},w(s),s,t}^- = J_{\mathit{right},-w(s),s,t}^+
\end{align}
freely, and independently of each other in the $+$ and $-$ cases (independence holds since $(w(s^+),s^+) \neq (-w(s^-),s^-)$ for any $s^\pm$). The only solutions $u$ for which transversality can't be achieved by such a variation of almost complex structure are those which satisfy
\begin{equation} \label{eq:zero-energy}
\partial_s (u \circ \tilde{\delta}^{\pm}) = 0 \quad \text{for all } (s,t) \in (1,2) \times \bR,
\end{equation}
or equivalently
\begin{equation}
\partial_t (u \circ \tilde{\delta}^{\pm}) = X_{s,t} \quad \text{for all } (s,t) \in (1,2) \times \bR.
\end{equation}
By continuity, such a solution $u$ is constant along the circles $s = 1,2$, hence (by unique continuation) constant over the part of the Riemann surface $\tilde{S}$ where \eqref{eq:zero-energy} does not apply. It follows that $u$ must be constant overall, with its value being a fixed point of $\phi$. But one can choose $H$ so that $X_{s,t}$ does not vanish identically at any of those fixed points, and then there are no such solutions.

\begin{addendum} \label{th:flip-the-sign}
Let's temporarily write $\tilde{J}_z$ for the family given by applying the same formulae to $\tilde{w}(s) = -w(s)$ (which is a flow line of $-\nabla f$ going from $v^{i,-\sigma}$ to $v^{0,-}$). Then,
\begin{equation}
\tilde{J}_{\tilde{\gamma}(z)} = J_z.
\end{equation}
To see this, note that
\begin{align}
\label{eq:add-1}
& \text{if $\tilde{\pi}(z) = (s,t)$ with $-1 \leq s \leq 1$, }
\tilde{J}_{\tilde{\gamma}(z)} = J_{\mathit{left},-w(s),s,t} = (\rho_*J_{\mathit{left},w(s),s})_t
= J_{\mathit{left},w(s),s,t}\, ; \\
\label{eq:add-2}
& \text{if $z = \tilde{\epsilon}^\pm(s,t)$, we have $\gamma(z) = \tilde{\epsilon}^\mp(s,t)$, hence } 
\tilde{J}_{\tilde{\gamma}(z)} = J_{\mathit{left},\mp (-w(s)),s,t} = J_{\mathit{left},\pm w(s),s,t}\, ; \\
\label{eq:add-3}
& \text{if $z = \tilde{\delta}^{\pm}(s,t)$, we have $\gamma(z) = \tilde{\delta}^{\mp}(s,t)$, hence }
\tilde{J}_{\tilde{\gamma}(z)} = J_{\mathit{right},-w(s),s,t}^{\mp} = J_{\mathit{right},w(s),s,t}^{\pm}.
\end{align}
Here, \eqref{eq:add-1} uses \eqref{eq:jeq-1} and \eqref{eq:minus1-bound}; \eqref{eq:add-2} reduces to a tautology; and \eqref{eq:add-3} uses \eqref{eq:right-swap}. Because of this and \eqref{eq:gamma-y}, the equation \eqref{eq:perturbed-equation} for the family $J$ and its counterpart for $\tilde{J}$ are related by a coordinate change $u \mapsto u \circ \tilde{\gamma}$.
\end{addendum}

We denote by $\scrM_{\mathit{prod}}^{i,\sigma}(y,x^+,x^-)$ the moduli space of pairs $(w,u)$, where $w \in \scrP^{i,\sigma}$, and $u$ is a solution of the perturbed version \eqref{eq:perturbed-equation} of \eqref{eq:pants-equation}. These moduli spaces are generically smooth, and in the same sense as in \eqref{eq:dim-mod-2}, one has
\begin{equation} \label{eq:dimension-formula-prod}
\mathrm{dim}\,  \scrM_{\mathit{prod}}^{i,\sigma}(y,x^+,x^-) \equiv |y| - |x^+| - |x^-| + i
\;\; \mathrm{mod}\; 2.
\end{equation}
There is a natural compactification $\bar{\scrM}_{\mathit{prod}}^{i,\sigma}(y,x^+,x^-)$, whose construction proceeds along familiar lines (it is a parametrized version of the classical construction underlying the pair-of-pants product \cite{schwarz95,salamon99}). Rather than writing this out fully, we consider its implications for the operations defined by counting isolated points in our spaces.

These operations have the form
\begin{equation} 
\begin{aligned}
& \wp^{i,\sigma}: \mathit{CF}^*(\phi) \otimes \mathit{CF}^*(\phi) \longrightarrow \mathit{CF}^{*-i}(\phi^2), \\ \textstyle & 
\wp^{i,\sigma}(x^+, x^-) = \sum_y \#\scrM_{\mathit{prod}}^{i,\sigma}(y,x^+,x^-) \, y,
\end{aligned}
\end{equation}
for $i \geq 0$ and $\sigma = \pm$, with one trivial case:
\begin{equation} \label{eq:0-}
\wp^{0,-} = 0.
\end{equation}
Their fundamental properties are
\begin{align} \label{eq:p-plus-relation}
& \begin{aligned}
& d_{J_{\phi^2}} \wp^{i,+}(x^+,x^-) + \wp^{i,+}(d_{J_{\phi}} x^+, x^-) + \wp^{i,+}(x^+,d_{J_\phi} x^-) = \\ 
& \wp^{i-1,+}(x^+,x^-) + \wp^{i-1,-}(x^-,x^+) + \sum_{\substack{i_1+i_2 = i \\ i_1>0}} d_{\mathit{eq}}^{i_1,+} \wp^{i_2,+}(x^+,x^-) + d_{\mathit{eq}}^{i_1,-} \wp^{i_2,-}(x^+,x^-),
\end{aligned} \\
\label{eq:p-minus-relation} & \begin{aligned}
& d_{J_{\phi^2}} \wp^{i,-}(x^+,x^-) + \wp^{i,-}(d_{J_{\phi}} x^+, x^-) + \wp^{i,-}(x^+,d_{J_\phi} x^-) = \\
& \wp^{i-1,-}(x^+,x^-) + \wp^{i-1,+}(x^-,x^+) + \sum_{\substack{i_1+i_2 = i \\ i_1>0}} d_{\mathit{eq}}^{i_1,+} \wp^{i_2,-}(x^+,x^-) + d_{\mathit{eq}}^{i_1,-} \wp^{i_2,+}(x^+,x^-).
\end{aligned}
\end{align}
Before discussing the origin of these relations in the structure of $\bar\scrM^{i,\sigma}_{\mathit{prod}}(y,x^+,x^-)$, let's see how they are used. Setting $\wp^i = \wp^{i,+} + \wp^{i,-}$, one gets
\begin{equation}
\begin{aligned}
& d_{J_{\phi^2}} \wp^i(x^+,x^-) + \wp^i(d_{J_{\phi}} x^+, x^-) + \wp^i(x^+,d_{J_\phi} x^-) = \\ & \qquad
\wp^{i-1}(x^+,x^-) + \wp^{i-1}(x^-,x^+) + 
\sum_{\substack{i_1+i_2 = i \\ i_1>0}} d_{\mathit{eq}}^{i_1} \wp^{i_2}(x^+,x^-),
\end{aligned}
\end{equation}
which is equivalent to saying that the $\bK[[h]]$-linear map
\begin{equation} \label{eq:chain-product}
%\begin{aligned}
%& 
\wp: C^*(\bZ/2; \mathit{CF}^*(\phi) \otimes \mathit{CF}^*(\phi)) \longrightarrow \mathit{CF}^*_{\mathit{eq}}(\phi^2), \quad
\wp(x^+ \otimes x^-) = \textstyle \sum_i h^i \wp^i(x^+, x^-)
%\end{aligned}
\end{equation}
is a chain map. We define \eqref{eq:equi-pants} to be the induced cohomology level map.

The geometry behind \eqref{eq:p-plus-relation}, \eqref{eq:p-minus-relation} is especially intuitive for low values of $i$. Start with $i = 0$. If one takes $\sigma = -$, the space $\scrM_{\mathit{prod}}^{0,-}(y,x^+,x^-)$ is always empty, since there are no trajectories of $- \nabla f$ going from $v^{0,-}$ to $v^{0,+}$; this explains \eqref{eq:0-}. For the other choice of sign $\sigma = +$, there is one relevant gradient trajectory, namely the constant one $w(s) = v^{0,+}$. This means that $\scrM_{\mathit{prod}}^{0,+}(y,x^+, x^-)$ is a moduli space of perturbed pseudo-holomorphic maps, with no additional parameters. The resulting map $\wp^{0,+}$ is a standard cochain representative for the pair-of-pants product \eqref{eq:pair-of-pants}, and indeed \eqref{eq:p-plus-relation} just specializes to the statement that this is a chain map:
\begin{equation}
d_{J_{\phi^2}} \wp^{0,+}(x^+,x^-) + \wp^{0,+}(d_{J_{\phi}} x^+, x^-) + \wp^{0,+}(x^+,d_{J_\phi} x^-) = 0.
\end{equation}

\begin{remark} 
If we apply $h$-adic filtrations to both sides of \eqref{eq:chain-product}, we get a map between the associated spectral sequences. On the $E_1$ page, this has the form
\begin{equation}
C^*(\bZ/2; \mathit{HF}^*(\phi) \otimes \mathit{HF}^*(\phi)) = (\mathit{HF}^*(\phi) \otimes \mathit{HF}^*(\phi))[[h]] \longrightarrow \mathit{HF}^*(\phi^2)[[h]]. 
\end{equation}
The map is induced by $\wp^{0,+}$, hence is the ($h$-linear extension of) the pair-of-pants product. %In view of Lemma \ref{th:u-adic-spectral-sequence}, the fact that this is compatible with $E_1$ differentials is equivalent to the commutativity of \eqref{eq:commutativity}.
\end{remark}

Now consider the case $i = 1$ and $\sigma = +$, where \eqref{eq:p-plus-relation} says that
\begin{equation} \label{eq:1plus-equation}
d_{J_{\phi^2}} \wp^{1,+}(x^+,x^-) + \wp^{1,+}(d_{J_{\phi}} x^+, x^-) + \wp^{1,+}(x^+,d_{J_\phi} x^-) = 
\wp^{0,+}(x^+,x^-) + d_{\mathit{eq}}^{1,+} \wp^{0,+}(x^+,x^-)
\end{equation}
(it is a priori clear that the right hand side is nullhomotopic, since $d_{\mathit{eq}}^{1,+}$ is chain homotopic to the identity, as previously discussed). There is a unique unparametrized flow line $[w]$ of $-\nabla f$ going from $v^{1,+}$ to $v^{0,+}$. The space $\scrP^{1,+} \iso \bR$ consists of all its possible parametrizations, and gives rise to a one-parameter family of inhomogeneous Cauchy-Riemann equations for maps $\tilde{S} \rightarrow M$. Following the general description in \eqref{eq:stratification-2}, the two boundary points of the compactification $\bar\scrP^{1,+}$ are as follows. 

One boundary point is $\scrP^{0,+} \times \scrQ^{1,+}$, which in terms of broken flow lines means that the limit consists of a constant parametrized flow line $w^\infty(s) = v^{1,+}$, combined with the unparametrized flow line $[w]$. Sequences in $\scrP^{1,+}$ converging to this limit are reparametrizations 
\begin{equation} \label{eq:reparametrize}
w^k(s) = w(s-s^k), 
\end{equation}
with $s^k \rightarrow \infty$. Let $J_z^k$ be the family of almost complex structures on $\tilde{S}$ associated to $w^k$. As $k \rightarrow \infty$, this family has a limit $J_z^\infty$ (in the sense of uniform convergence on compact subsets), which is precisely that associated to the constant gradient flow line $w^\infty$. In fact, the convergence behaviour is better than that: outside the preimage of a compact subset of $S$, one has $J_z^k = J^\infty_z$, since
\begin{align}
\label{eq:converges-1}
& \parbox{35em}{
if $z = \tilde{\epsilon}^+(s,t)$ and $s$ is sufficiently negative, $w(s-s^k)$ is close to $v^{1,+}$ for all $k$, hence $J_z^k = J_{\mathit{left},w(s-s^k),s,t} = J_{\mathit{eq},w(s-s^k),s} = J_{\phi^2,t}$;} \\
\label{eq:converges-2}
& 
\text{if $z = \tilde{\epsilon}^-(s,t)$ and $s$ is sufficiently negative, one similarly has }
J_z^k = (\rho_*J_{\phi^2})_t; \\
\label{eq:converges-3}
& \text{if $z = \tilde{\delta}^{\pm}(s,t)$ and $s \geq 2$, $J^k_z = J^{\pm}_{\mathit{right},w(s-s^k),s,t} = J_{\phi,t}$.} 
\end{align}
Here, \eqref{eq:converges-1} and \eqref{eq:converges-2} use \eqref{eq:prod-eq} as well as \eqref{eq:j-critical}, while \eqref{eq:converges-3} uses \eqref{eq:2-bound}. As a final point, note that even though we have characterized the family $J^\infty_z$ as being associated to the constant flow line at $v^{1,+}$, it is the same as that for the constant flow line at $v^{0,+}$, because of \eqref{eq:left-tau} and \eqref{eq:right-tau}. Note also that the inhomogeneous term in \eqref{eq:perturbed-equation} is the same for all $k$. Given that, a standard Gromov compactness argument shows that if we have a sequence $(w^k,u^k) \in \scrM_{\mathit{prod}}^{1,+}(y,x^+,x^-)$ with $w^k$ as in \eqref{eq:reparametrize}, then a subsequence of the $u^k$ converges on compact subsets to some $u^\infty$ which, together with the constant flow line at $v^{0,+}$, yields an element of one of the moduli spaces $\scrM_{\mathit{prod}}^{0,+}$. In the case when the original $(w^k,u^k)$ belonged to the one-dimensional part of $\scrM_{\mathit{prod}}^{1,+}(y,x^+,x^-)$, one can show that the limit point belongs to $\scrM_{\mathit{prod}}^{0,+}(y,x^+,x^-)$. This, together with a suitable gluing result, explains the appearance of the first term on the right hand side of \eqref{eq:1plus-equation}.

The other boundary point is $\scrQ^{1,+} \times \scrP^{0,+}$, which consists of $[w]$ together with a constant parametrized flow line $w^\infty = v^{0,+}$. A sequence converging to this limit can be written as in \eqref{eq:reparametrize}, but where $s^k \rightarrow -\infty$. The associated families of almost complex structures $J^k_z$ converge to the same limit $J^\infty_z$ as before (uniformly on compact subsets). Correspondingly, if $u^k$ are such that $(w^k,u^k)$ is a sequence in $\scrM_{\mathit{prod}}^{1,+}(y,x^+,x^-)$, a subsequence of the $u^k$ will converge (on compact subsets) to a limit $u^\infty$ such that $(w^\infty,u^\infty)$ belongs to one of the moduli spaces $\scrM_{\mathit{prod}}^{0,+}$. Note that over the ends $\tilde{\delta}^{\pm}$, one still has \eqref{eq:converges-3}, but over the other ends $\tilde{\epsilon}^{\pm}$, the behaviour of the $J^k_z$ is no longer as simple as in \eqref{eq:converges-1}, \eqref{eq:converges-2}. Instead, with a suitable reparametrization, one has
\begin{equation}
J^k_{\tilde{\epsilon}^{\pm}(s+s_k,t)} = J_{\mathit{left},\pm w(s),s+s_k,t} = J_{\mathit{eq},w(s),t} \;\; \text{ if $s \leq -2-s_k$,}
\end{equation}
by \eqref{eq:prod-eq}. As a consequence, a subsequence of the maps $\tilde{u}^k(s,t) = u^k(\tilde{\epsilon}^+(s+s_k,t))$ converges on compact subsets to some $\tilde{u}^\infty$ such that $[w,\tilde{u}^\infty]$ is an element in one of the moduli spaces $\scrM_{\mathit{eq}}^{1,+}$. One now has two components of the limit: the principal component $(w^\infty,u^\infty)$, and the non-principal component $[w,\tilde{u}^\infty]$. This explains the second term on the right hand side of \eqref{eq:1plus-equation}.

Next, let's look at the parallel situation for $i = 1$ and $\sigma = -$, where \eqref{eq:p-minus-relation} specializes to
\begin{equation} \label{eq:1minus-equation}
d_{J_{\phi^2}}\wp^{1,-}(x^+, x^-) + \wp^{1,-}(d_{J_\phi}x^+, x^-) + \wp^{1,-}(x^+, d_{J_\phi}x^-) = \wp^{0,+}(x^-,x^+) + d_{\mathit{eq}}^{1,-} \wp^{0,+}(x^+,x^-)
\end{equation}
(since $d_{\mathit{eq}}^{1,-}$ induces the involution on $\mathit{HF}^*(\phi^2)$, the commutativity of \eqref{eq:commutativity} is equivalent to the fact that the right hand side of \eqref{eq:1minus-equation} is nullhomotopic). As in the previously discussed case, there is a single unparametrized flow line $[w]$ from $v^{1,-}$ to $v^{0,+}$. Consider the limit \eqref{eq:reparametrize} with $s^k \rightarrow \infty$. In this case, the Cauchy-Riemann equations on $S$ converge to that associated to the constant gradient flow line $v^{1,-}$ (and there are counterparts of \eqref{eq:converges-1}--\eqref{eq:converges-3} as well). As shown in Addendum \ref{th:flip-the-sign}, the family of almost complex structures associated to (the constant flow line at) $v^{1,-}$ is related to that for $v^{1,+}$ by the action of the involution $\tilde{\gamma}$ on $\tilde{S}$; and the inhomogeneous term is invariant under that involution. If we then define $u^\infty$ as before, it follows that $(v^{0,+}, u^\infty \circ \tilde{\gamma})$ is an element of one of the moduli spaces $\scrM_{\mathit{prod}}^{1,+}$. Recall from \eqref{eq:tilde-delta-properties} that $\tilde{\gamma}$ exchanges the two ends $\tilde{\delta}^{\pm}$. In the case where the original $(w^k,u^k)$ belonged to the one-dimensional part of $\scrM_{\mathit{prod}}^{1,-}(y,x^+,x^-)$, one finds that 
\begin{equation}
\textstyle
\lim_{s \rightarrow +\infty} u^\infty(\tilde{\gamma}(\tilde{\delta}^{\pm}(s,t))) =
\lim_{s \rightarrow + \infty} u^\infty(\tilde{\delta}^\mp(s,t)) = x^{\mp},
\end{equation}
where the effect of the $\tilde{\gamma}$ is to swap the roles of the limits $x^{\pm}$. Similarly, using \eqref{eq:tilde-epsilon-properties}, and taking into account the way in which the ends $\tilde{\epsilon}^{\pm}$ appear in \eqref{eq:pants-equation}, one gets
\begin{equation} \textstyle
\lim_{s \rightarrow -\infty} u^\infty(\tilde{\gamma}(\tilde{\epsilon}^+(s,t))) =
\lim_{s \rightarrow -\infty} u^\infty(\tilde{\epsilon}^-(s,t)) = y.
\end{equation}
Hence, $(v^{0,+}, u^\infty \circ \tilde{\gamma})$ is actually an element of $\scrM_{\mathit{prod}}^{0,+}(y,x^-,x^+)$, which explains the first term on the right hand side of \eqref{eq:1minus-equation}. The second term arises exactly in the same way as its counterpart in \eqref{eq:1plus-equation}.
\begin{figure}
\begin{picture}(0,0)%
\includegraphics{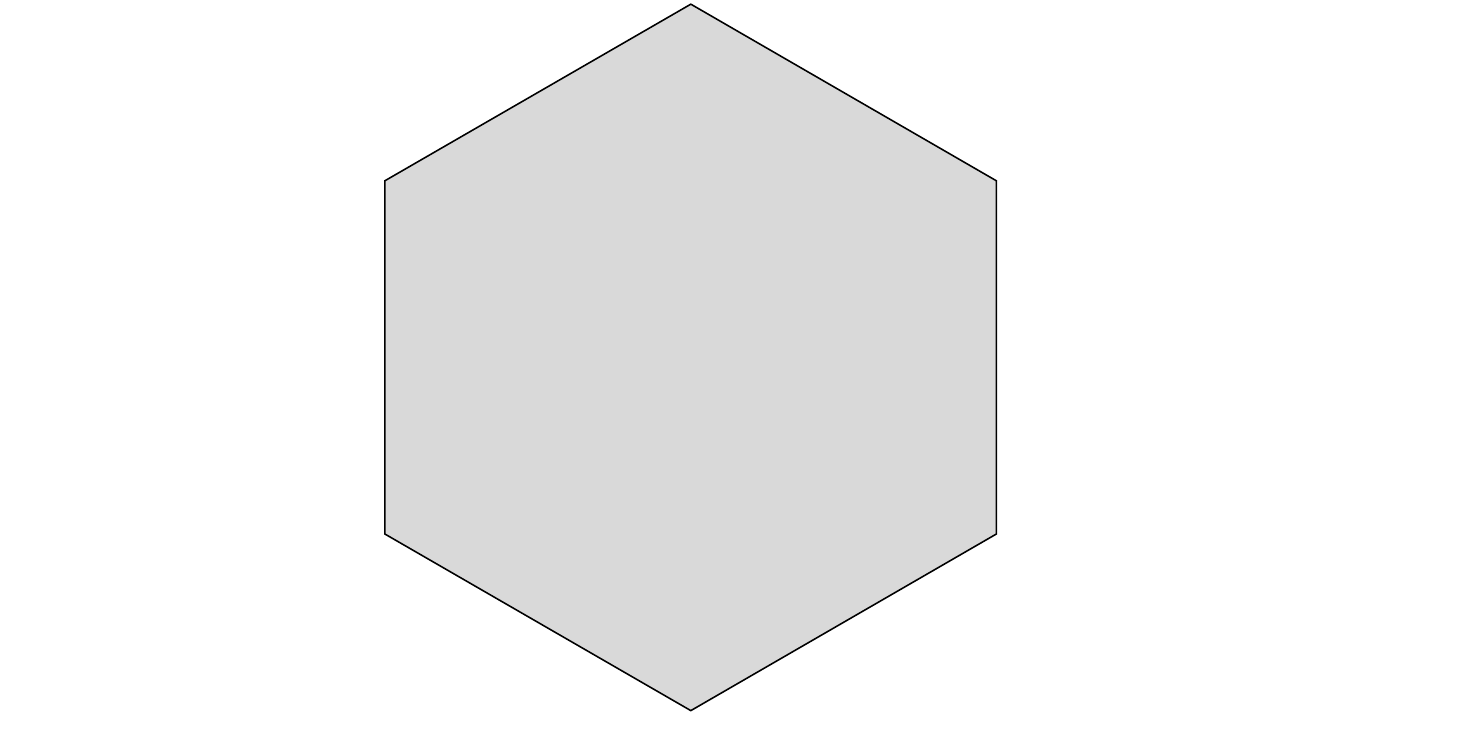}%
\end{picture}%
\setlength{\unitlength}{3947sp}%
\begingroup\makeatletter\ifx\SetFigFont\undefined%
\gdef\SetFigFont#1#2#3#4#5{%
  \reset@font\fontsize{#1}{#2pt}%
  \fontfamily{#3}\fontseries{#4}\fontshape{#5}%
  \selectfont}%
\fi\endgroup%
\begin{picture}(6991,3526)(-914,-3125)
\put(3151, 89){\makebox(0,0)[lb]{\smash{{\SetFigFont{10}{12.0}{\rmdefault}{\mddefault}{\updefault}{$v^{2,+} \noarrow v^{1,+} \yesarrow v^{0,+}$}%
}}}}
\put(3226,-2761){\makebox(0,0)[lb]{\smash{{\SetFigFont{10}{12.0}{\rmdefault}{\mddefault}{\updefault}{$v^{2,+} \yesarrow v^{2,+} \noarrow v^{0,+}$}%
}}}}
\put(-899,-1336){\makebox(0,0)[lb]{\smash{{\SetFigFont{10}{12.0}{\rmdefault}{\mddefault}{\updefault}{$v^{2,+} \yesarrow v^{1,-} \noarrow v^{0,+}$}%
}}}}
\put(-149,-2761){\makebox(0,0)[lb]{\smash{{\SetFigFont{10}{12.0}{\rmdefault}{\mddefault}{\updefault}{$v^{2,+} \noarrow v^{1,-} \yesarrow v^{0,+}$}%
}}}}
\put(3976,-1261){\makebox(0,0)[lb]{\smash{{\SetFigFont{10}{12.0}{\rmdefault}{\mddefault}{\updefault}{$v^{2,+} \yesarrow v^{1,+} \noarrow v^{0,+}$}%
}}}}
\put(151, 89){\makebox(0,0)[lb]{\smash{{\SetFigFont{10}{12.0}{\rmdefault}{\mddefault}{\updefault}{$v^{2,+} \noarrow v^{0,+} \yesarrow v^{0,+}$}%
}}}}
\put(3601,-211){\makebox(0,0)[lb]{\smash{{\SetFigFont{10}{12.0}{\rmdefault}{\mddefault}{\updefault}{$d_{\mathit{eq}}^{1,+} \wp^{1,+}$}%
}}}}
\put(451,-211){\makebox(0,0)[lb]{\smash{{\SetFigFont{10}{12.0}{\rmdefault}{\mddefault}{\updefault}{$d_{\mathit{eq}}^{2,+} \wp^{0,+}$}%
}}}}
\put(526,-3061){\makebox(0,0)[lb]{\smash{{\SetFigFont{10}{12.0}{\rmdefault}{\mddefault}{\updefault}{$d_{\mathit{eq}}^{1,-} \wp^{1,-}$}%
}}}}
\put(4276,-1561){\makebox(0,0)[lb]{\smash{{\SetFigFont{10}{12.0}{\rmdefault}{\mddefault}{\updefault}{$\wp^{1,+}$}%
}}}}
\put(-1000,-1636){\makebox(0,0)[lb]{\smash{{\SetFigFont{10}{12.0}{\rmdefault}{\mddefault}{\updefault}{$\wp^{1,-}$ (inputs exchanged)}%
}}}}
\put(3526,-3061){\makebox(0,0)[lb]{\smash{{\SetFigFont{10}{12.0}{\rmdefault}{\mddefault}{\updefault}{$0$ (contribution vanishes)}%
}}}}
\end{picture}%
\caption{\label{fig:2rel}}
\end{figure}%

The final example we want to look at is $i = 2$ and $\sigma = +$. Figure \ref{fig:2rel} shows the broken flow lines associated to the boundary faces of $\bar{\scrP}^{2,+}$ (where the dotted arrows mark the parametrized flow lines), together with the corresponding terms in the relevant instance of \eqref{eq:p-plus-relation}:
\begin{equation} \label{eq:2-plus-relation}
\begin{aligned}
& d_{J_{\phi^2}} \wp^{2,+}(x^+,x^-) + \wp^{2,+}(d_{J_{\phi}} x^+, x^-) + \wp^{2,+}(x^+,d_{J_\phi} x^-) = \\ 
& \quad \wp^{1,+}(x^+,x^-) + \wp^{1,-}(x^-,x^+) + d_{\mathit{eq}}^{1,+} \wp^{1,+}(x^+,x^-) + d_{\mathit{eq}}^{1,-} \wp^{1,-}(x^+,x^-) + d^{2,+}_{\mathit{eq}} \wp^{0,+}(x^+,x^-).
\end{aligned}
\end{equation}
Note that one codimension $1$ boundary face, namely 
\begin{equation} \label{eq:hidden-face}
\scrP^{0,+} \times \scrQ^{2,+} \subset \partial\bar\scrP^{2,+}, 
\end{equation}
yields a trivial contribution. Even though this may appear to be a new phenomenon, it is actually due to the same mechanism which produces the first terms on the right hand sides of \eqref{eq:1plus-equation} and \eqref{eq:1minus-equation}. As one approaches any point in this boundary face, the Cauchy-Riemann equations on $\tilde{S}$ converge to the same limit, which is the equation that underlies $\wp^{0,+}$; that convergence is locally uniform, and also uniform on any subset of the form $\tilde{\delta}^{\pm}([1,\infty) \times (t_0,t_1))$ or $\tilde{\epsilon}^{\pm}([1,\infty) \times (t_0,t_1))$. This means that the principal component of the limit is an element of $\scrM^{0,+}_{\mathit{prod}}(y,x^+,x^-)$, independently of which point of \eqref{eq:hidden-face} one approaches. Because of the extra $\scrQ^{2,+}$ parameter, there are no isolated points in the resulting part of $\bar\scrM^{2,+}_{\mathit{prod}}(y,x^+,x^-)$.

The examples above already contain all the issues one encounters in the general case. There are $(4i-2)$ codimension one boundary faces of $\bar\scrP^{i,\sigma}$, of the form
\begin{align} 
\label{eq:pq-face}
& \scrP^{0,+} \times \scrQ^{i,\sigma}, \; \cdots, \; \scrP^{i-1,+} \times \scrQ^{1,\sigma}, \;
   \scrP^{1,-} \times \scrQ^{i-1,-\sigma},\; \cdots, \; \scrP^{i-1,-} \times \scrQ^{1,-\sigma}, \\
\label{eq:qp-face}
& \scrQ^{1,\sigma} \times \scrP^{i-1,+}, \; \cdots, \; \scrQ^{i,\sigma} \times \scrP^{0,+}, \;
   \scrQ^{1,-\sigma} \times \scrP^{i-1,-}, \; \cdots, \; \scrQ^{i-1,-\sigma} \times \scrP^{1,-}.
\end{align}
Of the faces \eqref{eq:pq-face}, those of the form $\scrP^{i-1,\pm} \times \scrQ^{1,\pm \sigma} \iso \scrP^{i-1,\pm}$ contribute the first two terms of on the right hand side of \eqref{eq:p-plus-relation}, \eqref{eq:p-minus-relation}. All others contribute zero, for the same reason as in the special case \eqref{eq:hidden-face}. In contrast, all faces  \eqref{eq:qp-face} contribute, and give rise to the remaining terms on the right hand side of \eqref{eq:p-plus-relation}, \eqref{eq:p-minus-relation} (the left hand side, as usual, accounts for bubbling off of solutions of \eqref{eq:floer} over the ends).

\begin{addendum} \label{th:careful-action}
The introduction of inhomogeneous terms slightly complicates arguments about the action filtration. For any solution $u$ of the perturbed version \eqref{eq:perturbed-equation} of \eqref{eq:pants-equation}, 
\begin{equation} \label{eq:inhomogeneous-action}
A_{\phi^2}(y) - A_\phi(x^+) - A_\phi(x^-) \geq \textstyle \sum_\sigma \int_{[-2,-1] \times [0,1]} (u \circ \tilde{\delta}^\sigma)^*(\partial_s H_{s,t}).
\end{equation}
In particular, if 
\begin{equation} \label{eq:bound-h}
\textstyle \int_{[-2,-1] \times (0,1)} ||\partial_s H_{s,t}||_{L^\infty} < \epsilon
\end{equation}
for some constant $\epsilon>0$, the integrand in \eqref{eq:inhomogeneous-action} is pointwise $> -\epsilon$. Given $\phi$, there is an $\epsilon$ such that
\begin{equation} \label{eq:2h}
A_{\phi^2}(y) - A_\phi(x^+) - A_\phi(x^-) \notin (-2\epsilon,0) \quad \text{for all fixed points $y,x^\pm$.}
\end{equation}
Suppose that we've chosen $H$ in such a way that \eqref{eq:bound-h} holds for this $\epsilon$. It then follows that a solution $u$ can exist only if
\begin{equation}
A_{\phi^2}(y) - A_\phi(x^+) - A_{\phi}(x^-) \geq 0.
\end{equation}
In other words, for sufficiently small choices of inhomogeneous terms, $\wp$ will preserve the action filtration. 
\end{addendum}

%\newpage
\section{Symplectic linear algebra and index theory\label{sec:linear}}

This section collects (classical) background material, which underlies the local study of nondegenerate 2-periodic points of symplectic automorphisms.

\subsection{The Krein index}
Let $(H,\omega_H)$ be a symplectic vector space of dimension $2n$. Denote its linear automorphism group by $\mathit{Sp}(H)$, and the associated Lie algebra (often called the space of Hamiltonian endomorphisms) by $\mathfrak{sp}(H)$. Consider the open subsets
\begin{align} \label{eq:starstar}
& \mathit{Sp}^{**}(H) = \{A \in \mathit{Sp}(H) \;:\; \pm 1 \notin \mathit{spec}(A)\}, \\
\label{eq:lie-starstar}
& \mathfrak{sp}^{**}(H) = \{B \in \mathfrak{sp}(H) \;:\; 0,\pm 1 \notin \mathit{spec}(B)\}
\end{align}
(since the spectrum of $B$ is symmetric around zero, having $1$ or $-1$ as eigenvalues are equivalent conditions). The Cayley transform 
\begin{equation} \label{eq:cayley}
A = (B+I)(B-I)^{-1}
\end{equation}
yields a diffeomorphism between \eqref{eq:lie-starstar} and \eqref{eq:starstar} \cite[p.\ 18]{arnold-givental}.

Semisimple matrices form an open and dense subset of $\mathfrak{sp}^{**}(H)$ \cite[p.\ 14]{arnold-givental}, and therefore of $\mathit{Sp}^{**}(H)$ as well, by the Cayley transform. Any semisimple element of $\mathit{Sp}^{**}(H)$ can be written, with respect to some identification $(H, \omega_H) \iso (\bR^{2n}, dp_1 \wedge dq_1 + \cdots + dp_n \wedge dq_n)$, as a direct sum of blocks of the following form (see the corresponding statement for $\mathfrak{sp}^{**}(H)$ in \cite{williamson36} or \cite[p.\ 10]{arnold-givental}):
\begin{equation} \label{eq:blocks}
\begin{array}{l|l|l}
\text{type} & \text{symplectic matrix} & \text{eigenvalues} \\ \hline
\text{(i+)} 
& \left(\begin{smallmatrix} a & 0 \\ 0 & a^{-1} \end{smallmatrix}\right), \;\; a \in (0,1)
& \text{real $>0$}
 \\ \hline
\text{(i-)} 
& \text{same as (i+)}, \;\; a \in (-1,0)
& \text{real $<0$}
 \\ \hline
\text{(ii+)} 
& \left(\begin{smallmatrix} a_1 & -a_2 \\ a_2 & a_1 \end{smallmatrix}\right), \;\; a_1^2+a_2^2 = 1, \; a_2>0
& \text{unit circle}
\\ \hline
\text{(ii-)} &
\text{same as (ii+)}, \; a_2 < 0, 
& \text{unit circle}
\\ \hline
\text{(iii)} 
& \left(\begin{smallmatrix} 
a_1 & 0 & -a_2 & 0 \\
0 & a_1/(a_1^2+a_2^2) & 0 & -a_2/(a_1^2+a_2^2) \\
a_2 & 0 & a_1 & 0 \\
0 & a_2/(a_1^2+a_2^2) & 0 & a_1/(a_1^2+a_2^2)
\end{smallmatrix}\right), \;\;
\begin{matrix}
a_1 \in (-1,1) \\
a_1^2 + a_2^2 \in (0,1] 
\end{matrix}
& 
\begin{matrix}
\text{quadruple} \\
(a_1 \pm ia_2)^{\pm 1}
\end{matrix}
\end{array}
\end{equation}
where the last matrix is written in coordinates $(p_1,q_1,p_2,q_2)$. There is some overlap - the following are equal or conjugate in $\mathit{Sp}(\bR^4)$:
\begin{align}
& \text{type (iii) with $(a_1,a_2)$} \quad\sim\quad \text{type (iii) with $(a_1,-a_2)$} \\
& \text{type (iii) with $a_1>0$, $a_2 = 0$} \quad=\quad \text{direct sum of two type (i+) blocks} 
\label{eq:trade-plus} \\
& \text{type (iii) with $a_1<0$, $a_2 = 0$} \quad=\quad \text{direct sum of two type (i-) blocks} 
\label{eq:trade-minus} \\
& \text{type (iii) with $a_1^2 + a_2^2 = 1$} \quad\sim\quad \text{sum of a type (ii+) and (ii-) blocks}. \label{eq:circle-fusion}
\end{align}
 
Take $A \in \mathit{Sp}^{**}(H)$, and let $E \subset H_{\bC} = H \otimes_{\bR} \bC$ be the direct sum of all generalized eigenspaces for the eigenvalues $\lambda$ of $A$ which satisfy 
\begin{equation} \label{eq:positive-im}
|\lambda|^2 = 1, \quad \mathrm{im}(\lambda)>0. 
\end{equation}
The space $E$ comes with a nondegenerate hermitian form \cite[Chapter 1.2, Definition 8]{ekeland}
\begin{equation} \label{eq:e-form}
\langle h_1,h_2 \rangle_E = i \omega_H(\bar{h}_1,h_2).
\end{equation}

\begin{definition} 
The Krein index of $\kappa(A)$ is the signature of \eqref{eq:e-form}. In other words, if there is a isomorphism $E \iso \bC^i \times \bC^j$ which transforms our hermitian form into $d\bar{x}_1\, dx_1 + \cdots + d\bar{x}_i\, dx_i - d\bar{y}_1\,dy_1 - \cdots - d\bar{y}_j \, dy_j$, then $\kappa(A) = i-j$.
\end{definition}

Since the (generalized) eigenvalues which lie on the unit circle come in pairs $\{\lambda, \bar{\lambda}\}$ of equal multiplicity, $\mathrm{dim}(E) \leq n$. Moreover, if $\mathrm{det}(I-A) < 0$, at least one eigenvalue must lie outside the unit circle, hence the inquality of dimensions will then be a strict one. One concludes that
\begin{equation} \label{eq:conditions-kappa}
\left\{\begin{aligned}
& |\kappa(A)| \leq n && \text{if $\mathrm{det}(I-A) > 0$,} \\
& |\kappa(A)| \leq n-1 && \text{if $\mathrm{det}(I-A) < 0$.}
\end{aligned}\right.
\end{equation}
By the same consideration, the parity of $\kappa(A)$ is the dimension of $E$, or equivalently
\begin{equation}
(-1)^{\kappa(A)} = (-1)^n\, \mathrm{sign}(\mathrm{det}(I-A^2)).
\end{equation}
% (i+) + (i-) + (ii+) + (ii-) = n (mod 2)
% det(I-A) = counts i+ blocks
% det(I-A^2) = counts i+ and i- blocks
% krein = (ii+)-(ii-)

\begin{lemma} \label{th:krein}
$\kappa: \mathit{Sp}^{**}(H) \longrightarrow \bZ$ is a locally constant function.
\end{lemma}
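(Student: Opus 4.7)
The plan is to realize $\kappa(A)$ as the signature of a nondegenerate hermitian form on a complex vector space whose dimension is locally constant in $A$, and which depends continuously on $A$. Since the signature of a continuous family of nondegenerate hermitian forms on a fixed complex vector space is itself locally constant, local constancy of $\kappa$ will follow.

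First I would fix $A_0 \in \mathit{Sp}^{**}(H)$ and choose an open set $U$ contained in the open upper half plane $\{z \in \bC : \mathrm{im}(z) > 0\}$, closed under the involution $\lambda \mapsto 1/\bar\lambda$ (which preserves that half plane), containing every eigenvalue of $A_0$ with positive imaginary part, and with closure disjoint from $\{\pm 1\}$ and from all remaining eigenvalues of $A_0$. The Riesz projection
\[ P_A = \frac{1}{2\pi i} \oint_{\partial U} (z\,\mathrm{id}_{H_\bC} - A_\bC)^{-1}\, dz \]
then exists and depends continuously on $A$ in a neighborhood $\scrU$ of $A_0$ in $\mathit{Sp}^{**}(H)$; its image $F_A \subset H_\bC$ is the sum of the generalized eigenspaces of $A_\bC$ for eigenvalues in $U$, and has locally constant complex dimension, with $P_A$ furnishing a continuous family of identifications $F_{A_0} \xrightarrow{\sim} F_A$.

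Next I would restrict the sesquilinear form $\langle h_1, h_2 \rangle = i\omega_H(\bar h_1, h_2)$ to $F_A$. Hermicity is a short computation using that $\omega_H$ is real and antisymmetric. For nondegeneracy I would invoke the classical fact that for $A_\bC \in \mathit{Sp}(H_\bC)$ two generalized eigenspaces $E_\lambda, E_\mu$ are $\omega_H$-orthogonal unless $\lambda\mu = 1$, in which case the pairing is perfect. Combined with the $\lambda \mapsto 1/\bar\lambda$-symmetry of $U$ (so that $E_\lambda \subset F_A$ implies $E_{1/\bar\lambda} \subset F_A$), this shows that $\langle \cdot, \cdot \rangle|_{F_A}$ is nondegenerate for every $A \in \scrU$. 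Partitioning the eigenvalues in $U$ then identifies the signature with $\kappa(A)$: unit-circle eigenvalues contribute exactly the space $E$ from the definition, with signature $\kappa(A)$; each off-circle eigenvalue $\lambda$ has $1/\bar\lambda \neq \lambda$ also in $U$, and the two generalized eigenspaces, of equal dimension by the reality and symplecticness of $A$, form a hyperbolic block with two isotropic factors nondegenerately paired against each other, hence of signature zero.

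The delicate point is nondegeneracy of the restricted form for every $A \in \scrU$, including non-semisimple matrices and matrices with eigenvalues that have left the unit circle. What makes this work is precisely the interplay between the $\lambda \mapsto 1/\bar\lambda$-symmetry imposed on $U$ and the corresponding symmetry of the spectrum of a real symplectic matrix, which together guarantee that the symplectic partner of every eigenvalue contributing to $F_A$ also contributes. Once nondegeneracy is in hand, continuous transport via $P_A$ yields a continuous family of nondegenerate hermitian forms on the fixed space $F_{A_0}$ whose signature equals $\kappa(A)$, and the argument concludes.
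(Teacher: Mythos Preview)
Your argument is correct and complete in substance. The one point worth making explicit is why, for $A$ near $A_0$, the unit-circle eigenvalues of $A$ lying in $U$ are \emph{all} of the unit-circle eigenvalues with positive imaginary part (so that the unit-circle part of $F_A$ really is the $E$ from the definition of $\kappa(A)$): an eigenvalue of $A$ on $S^1$ with $\mathrm{im}>0$ must be close to some eigenvalue of $A_0$, and since $\bar U$ avoids the real eigenvalues of $A_0$ (all of which lie off $S^1$, as $\pm 1 \notin \mathrm{spec}(A_0)$) and the lower half-plane, that nearby eigenvalue of $A_0$ must already lie in $U$. You have all the ingredients for this, but the sentence ``unit-circle eigenvalues contribute exactly the space $E$'' deserves this one line of justification.

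As for comparison: the paper does not actually prove Lemma~\ref{th:krein}. It simply attributes the statement to Krein's stability theory, citing \cite{krein50,gelfand-lidsky58} and the expositions in \cite{arnold-avez,moser58,ekeland}, and notes that Lemmas~\ref{th:krein-conley-zehnder} and~\ref{th:index-theory} offer alternative (but not logically independent) viewpoints. Your argument is precisely the classical one from that literature: replace the discontinuously-varying subspace $E$ by the continuously-varying Riesz range $F_A$, chosen symmetrically under $\lambda\mapsto 1/\bar\lambda$ so that the hermitian form stays nondegenerate, and observe that the extra off-circle summands are hyperbolic and do not affect the signature. So you have supplied a self-contained proof where the paper defers to references; your route is the standard one, and nothing is lost or gained relative to the paper's (absent) argument.
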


This statement is not trivial, since $E$ can change discontinuously under deformations. It is part of Krein's stability theory (\cite{krein50,gelfand-lidsky58}; see \cite[Appendice 29]{arnold-avez} or \cite{moser58, ekeland} for expositions). We will give alternative perspectives in Lemmas \ref{th:krein-conley-zehnder} and \ref{th:index-theory} (these won't be strictly independent, since we'll use Lemma \ref{th:krein} on the way to proving them).

\begin{example}
If $A$ is semisimple, $\kappa(A)$ is the number of type (ii+) blocks minus the number of type (ii-) blocks. Indeed, for those two blocks, $E$ is spanned by $h = (1,\mp i)$, with $\langle h,h \rangle_E = \pm 2$. For all other block types, $E$ vanishes, with the obvious exception of \eqref{eq:circle-fusion} whose contribution is trivial.
\end{example}
% (1,-i) -> (a_1+ia_2,a_2-ia_1). In the + case, this is the required eigenvalue.
% + case: i \omega( (1,i), (1,-i)) = i(-i-i) = -2i^2 = 2
% - case: (1,i) leads to i\omega( (1,-i), (1,i) ) = -i .. i > 0.

\begin{example} \label{th:epsilon-example}
Take a nondegenerate quadratic form $Q$, with its associated $B \in \mathfrak{sp}(H)$, and set $A = \exp(t B)$ for small $t>0$. Then
\begin{equation} \label{eq:epsilon-path}
\kappa(A) = n-i(Q),
\end{equation}
where $i(Q)$ is the Morse index. Because $\kappa$ is locally constant, it is a priori clear that $\kappa(A)$ depends only on the Morse index. Since $\kappa$ is additive under direct sums, it is sufficient to check \eqref{eq:epsilon-path} in the case where $H = \bR^2$ and $Q(p,q) = \pm p^2 \pm q^2$, corresponding to blocks of type (i+), (ii+), (ii-).
\end{example}
% sign(Q) = -i(Q) + (2n-i(Q)) = 2n-2i(Q)

\begin{example} \label{th:rotated-epsilon-example}
Suppose that $H = \bR^{2n}$. Take $Q = p_1q_1 + ${\it (quadratic form in the other $2n-2$ variables)}, with associated $B \in \mathfrak{sp}(H)$. Set $A = R \exp(t B)$ for small $t>0$, where $R$ maps $(p_1,q_1,p_2,q_2,\dots)$ to $(-p_1,-q_1,p_2,q_2,\dots)$. Then, the Krein index is given by the same formula \eqref{eq:epsilon-path} as before. To check this, one can again use additivity, which means that it is enough to consider the case of $\bR^2$ and $Q = pq$; in that case, $A$ is of type (i-).
\end{example}

\begin{lemma} \label{th:connected}
The map
\begin{equation} \label{eq:invariant}
\pi_0(\mathit{Sp}^{**}(H)) \longrightarrow \{\pm 1\} \times \bZ, 
\quad A \mapsto ( \mathrm{sign}(\mathrm{det}(I-A)), \kappa(A) )
\end{equation}
is injective, and its image is precisely given by \eqref{eq:conditions-kappa}.
\end{lemma}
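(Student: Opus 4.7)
The plan is to verify four things: (a) well-definedness of the map on $\pi_0$; (b) that the image is contained in the set described by \eqref{eq:conditions-kappa}; (c) surjectivity onto that set; (d) injectivity. Points (a) and (b) are short: (a) combines the obvious local constancy of $\mathrm{sign}(\det(I-A))$ on $\mathit{Sp}^{**}(H)$ with Lemma \ref{th:krein}, and (b) is exactly the computation carried out just before \eqref{eq:conditions-kappa}.

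For surjectivity (c), I would exhibit block-diagonal representatives. Reading off \eqref{eq:blocks} one computes: type (ii+) contributes $(+1,+1)$ to $(\mathrm{sign}(\det(I-\cdot)),\kappa)$, type (ii-) contributes $(+1,-1)$, type (i-) contributes $(+1,0)$, and type (i+) contributes $(-1,0)$. Given $(+1,\kappa)$ with $|\kappa|\le n$, take $|\kappa|$ blocks of type (ii$\pm$) (sign matching that of $\kappa$) and $n-|\kappa|$ blocks of type (i-); given $(-1,\kappa)$ with $|\kappa|\le n-1$, replace one (i-) block by a (i+) block. The constraints \eqref{eq:conditions-kappa} are exactly what leaves room for this replacement.

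Injectivity (d) is the crux. Since $\mathit{Sp}^{**}(H)$ is an open subset of a smooth manifold it is locally path-connected, so its connected and path components coincide; the semisimple locus, being open and dense, meets every component, and hence every point of $\mathit{Sp}^{**}(H)$ lies in the same component as some semisimple element. It therefore suffices to join two semisimple matrices $A_0,A_1$ sharing both invariants by a path in $\mathit{Sp}^{**}(H)$. Put both in Williamson normal form \eqref{eq:blocks}; the parameter set of each of the four $2\times 2$ block types is connected, so corresponding blocks can be matched by a direct deformation. What remains is to prove that distinct block decompositions with the same total $(\mathrm{sign}(\det(I-A)),\kappa)$ lie in the same component. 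The key tool is the type (iii) family, whose parameter set $\{a_1\in(-1,1),\ a_1^2+a_2^2\in(0,1]\}\setminus\{(0,0)\}$ is connected and stays in $\mathit{Sp}^{**}(\bR^4)$ (eigenvalues $\pm 1$ would force $(a_1,a_2)=(\pm 1,0)$, which is excluded from the strip). By the identifications \eqref{eq:circle-fusion}, \eqref{eq:trade-plus}, \eqref{eq:trade-minus}, this single connected family contains the three $4$-dimensional summand decompositions (ii+)+(ii-) (at $a_1^2+a_2^2=1$), (i+)+(i+) (at $a_2=0$, $a_1>0$), and (i-)+(i-) (at $a_2=0$, $a_1<0$) as actual points in its parameter space, so any two of them are joined by a path in $\mathit{Sp}^{**}(\bR^4)$. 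Together with reordering of blocks --- implemented by conjugation by a symplectic permutation, which is a path in the connected group $\mathit{Sp}(H)$ --- this provides local moves interconverting block decompositions.

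The hard part will be the final bookkeeping: checking that the moves just described (type (iii) deformations on a $4$-dimensional summand, plus block permutations) truly suffice to relate any two semisimple decompositions with common $(\mathrm{sign}(\det(I-A)),\kappa)$. The conserved quantities under these moves are exactly the parity of the number of (i+) blocks (pinned down by $\mathrm{sign}(\det(I-A))$) and the signed count of (ii$\pm$) blocks (equal to $\kappa$); every remaining ambiguity --- trading a (ii+)+(ii-) pair for two (i+) or two (i-), or converting two (i+) into two (i-) via a type (iii) detour --- preserves both invariants and is realized within the family above. Once this is verified, injectivity follows and the image of \eqref{eq:invariant} is exactly the set carved out by \eqref{eq:conditions-kappa}.
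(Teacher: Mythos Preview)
Your proposal is correct and follows essentially the same route as the paper. Both arguments reduce to semisimple representatives (via openness and density), then use the connected type (iii) parameter family together with the identifications \eqref{eq:trade-plus}--\eqref{eq:circle-fusion} to trade $2\times 2$ block pairs, and invoke connectedness of $\mathit{Sp}(H)$ to handle conjugation/reordering. The only organizational difference is that the paper phrases the endgame as reduction to an explicit canonical form (at most one (i+), at most one (i$-$), only one sign of (ii$\pm$), the rest type (iii)), whereas you phrase it as ``the moves preserve exactly the two invariants, hence any two configurations with the same invariants are connected''; these are equivalent. Two minor remarks: your ``four $2\times 2$ block types'' sentence should also cover semisimple matrices that already carry a type (iii) block in their Williamson form (just deform it first to, say, two (i+) blocks along the connected type (iii) family); and the paper singles out $n=1$ as a separate explicit computation, but your argument handles that case too without modification.
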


\begin{proof}
Consider first the case $n = 1$, and set $H = \bR^2$. Writing $B = \left(\begin{smallmatrix} b_1 & b_2+b_3 \\ b_2-b_3 & -b_1 \end{smallmatrix}\right)$, one has
\begin{equation}
\mathfrak{sp}^{**}(\bR^2) = \{b \in \bR^3 \;:\; b_1^2 + b_2^2 - b_3^2 \neq 0,1\}.
\end{equation}
This clearly has four connected components, which under the Cayley transform correspond to the four size $2$ blocks in \eqref{eq:blocks}. In the order given there, the values of \eqref{eq:invariant} are $(-1,0)$, $(1,0)$, $(1,1)$, and $(1,-1)$, which implies the desired result.

Now consider the case $n>1$. Any element of $\mathit{Sp}^{**}(H)$ can be perturbed to a semisimple one. Because the symplectic group is connected, any two semisimple elements which have the same kind of block decomposition \eqref{eq:blocks} can be deformed into each other inside $\mathit{Sp}^{**}(H)$. By \eqref{eq:trade-plus}, two blocks of type (i+) can be traded for a block of type (iii), and the same is true of type (i-) by \eqref{eq:trade-minus}. This reduces us to the case where there is at most one block of type (i+) and at most one block of type (i-). Similarly, given one block of type (ii+) and one block of type (ii-), one can trade them for a block of type (iii) by \eqref{eq:circle-fusion}. Hence, by applying such deformations, one can kill either the type (ii+) blocks or the type (ii-) blocks. After that, the type (ii) part of the block decomposition is determined by $\kappa(A)$. The type (i) part is determined by the sign of $\mathrm{det}(I-A)$ together with the parity of $n$. This shows injectivity. It is straightforward to see that all values allowed by \eqref{eq:conditions-kappa} are achieved.
\end{proof}

\subsection{Index theory\label{subsec:index}}
Consider the subsets
\begin{align} 
\label{eq:star}
& \mathit{Sp}^{*}(H) = \{A \in \mathit{Sp}(H) \;:\; 1 \notin \mathit{spec}(A)\}, \\
\label{eq:lie-star}
& \mathfrak{sp}^{*}(H) = \{B \in \mathfrak{sp}(H) \;:\; \pm 1 \notin \mathit{spec}(B)\},
\end{align}
which are again diffeomorphic by \eqref{eq:cayley}. This time there are only two connected components, which are distinguished by the sign of $\mathrm{det}(I-A)$. Take the universal cover $\widetilde{\mathit{Sp}}(H)$, which is again a Lie group, and consider the preimage $\widetilde{\mathit{Sp}}^*(H)$ of \eqref{eq:star}. The connected components of this are classified by the Conley-Zehnder index, which is a locally constant function
\begin{equation} \label{eq:conley-zehnder}
\mu: \widetilde{\mathit{Sp}}^*(H) \longrightarrow \bZ
\end{equation}
satisfying 
\begin{equation}
(-1)^{\mu(\tilde{A})} = \mathrm{sign}(\mathrm{det}(I-A)).
\end{equation}
The action of the standard generator of the covering group $\pi_1(\mathit{Sp}(H)) \iso \bZ$ on an element $\tilde{A}$ decreases its Conley-Zehnder index by $2$.

\begin{remark}
The Conley-Zehnder index was introduced in \cite{conley-zehnder84}. Compared to the exposition in \cite{salamon-zehnder92}, our conventions are as follows. Inside $\widetilde{\mathit{Sp}}(H)$, take a path from the identity to $\tilde{A}$, and then project that path to $\mathit{Sp}(H)$. The index of that path, as defined in \cite[Theorem 3.3]{salamon-zehnder92}, is $\mu(\tilde{A})-n$ in our notation.
% (note that, due to the definition of $J_0$ in \cite[p.\ 1314]{salamon-zehnder92}, the complex determinant appearing in \cite[Theorem 3.1]{salamon-zehnder92} is the inverse of the usual determinant of a unitary matrix).
\end{remark}

\begin{example} \label{th:lifted-epsilon-example}
Take $A$ as in Example \ref{th:epsilon-example}, and consider the lift $\tilde{A}$ which is the exponential of $t B$ inside $\widetilde{\mathit{Sp}}(H)$ (equivalently, this is the unique lift which is close to the identity element of the universal cover). Then $\mu(\tilde{A}) = i(Q)$, compare \cite[Theorem 3.3(iv)]{salamon-zehnder92}.
\end{example}
%sign = (2n-mu)-mu = 2n-2mu -> 2\mu = 2n-sign
%
\begin{example} \label{th:lifted-rotated-epsilon-example}
Take $A$ as in Example \ref{th:rotated-epsilon-example}. Consider the lift $\tilde{A}$ obtained by using the exponential as before, together with the lift $\tilde{R}$ which one gets from the path that rotates $(p_1,q_1)$ anticlockwise by $\pi$. Then $\mu(\tilde{A}) = i(Q)-1$ (this can be reduced to Example \ref{th:lifted-epsilon-example} by a deformation).
\end{example}

\begin{lemma} \label{th:krein-conley-zehnder}
Take $A \in \mathit{Sp}^{**}(H)$. Then, for any lift $\tilde{A}$ to the universal cover,
\begin{equation} \label{eq:cz-equation}
\kappa(A) - n = \mu(\tilde{A}^2) - 2\mu(\tilde{A}).
\end{equation}
\end{lemma}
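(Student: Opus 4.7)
The plan is to reduce the identity to a finite list of explicit block computations, using continuity and additivity.

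First I would verify that both sides define functions on $\mathit{Sp}^{**}(H)$: squaring carries $\mathit{Sp}^{**}(H)$ into $\mathit{Sp}^{*}(H)$, since the condition $\pm 1 \notin \mathit{spec}(A)$ is equivalent to $1 \notin \mathit{spec}(A^{2})$, so $\mu(\tilde{A}^{2})$ makes sense; and under a change of lift $\tilde{A} \mapsto c^{k}\tilde{A}$ (where $c$ is the covering generator) one has $\tilde{A}^{2} \mapsto c^{2k}\tilde{A}^{2}$, so by the behaviour of $\mu$ under deck transformations the combination $\mu(\tilde{A}^{2}) - 2\mu(\tilde{A})$ is unchanged. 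Both sides are locally constant on $\mathit{Sp}^{**}(H)$: the left-hand side by Lemma \ref{th:krein}, the right-hand side because $\mu$ is locally constant on $\widetilde{\mathit{Sp}}^{*}(H)$ and squaring is continuous.

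Next, both sides are additive under symplectic direct sums, and so is $n$: for $\mu$ this is a standard property of the Conley--Zehnder index, and for $\kappa$ it follows from the compatibility of the eigenspace decomposition and of \eqref{eq:e-form} with direct sums. Combined with Lemma \ref{th:connected} and the deformations \eqref{eq:trade-plus}--\eqref{eq:circle-fusion}, this reduces the statement to one representative in each of the four connected components of $\mathit{Sp}^{**}(\bR^{2})$: any element of $\mathit{Sp}^{**}(H)$ can be perturbed within $\mathit{Sp}^{**}$ to a semisimple element, whose block decomposition \eqref{eq:blocks} can be further deformed so that only $2$-dimensional blocks occur.

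The three components of type (i+), (ii+), (ii-) are then realized by $A = \exp(tB)$ associated to a nondegenerate quadratic form $Q$ on $\bR^{2}$ of Morse index $1$, $0$, $2$ respectively. Since $A^{2} = \exp(2tB)$ has the same form (with $2t$ still small), Examples \ref{th:epsilon-example} and \ref{th:lifted-epsilon-example} apply to both $\tilde{A}$ and $\tilde{A}^{2}$, giving
\[
 \kappa(A) - n \;=\; -i(Q) \;=\; i(Q) - 2\,i(Q) \;=\; \mu(\tilde{A}^{2}) - 2\mu(\tilde{A}).
\]
For the remaining (i-) component I would take $A = R\exp(tB)$ with $R$ the $\pi$-rotation of $\bR^{2}$ and $Q = pq$, as in Examples \ref{th:rotated-epsilon-example} and \ref{th:lifted-rotated-epsilon-example}. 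Since $R$ and $B$ commute, $A^{2} = \exp(2tB)$, and in the universal cover $\tilde{A}^{2} = \tilde{R}^{\,2}\,\widetilde{\exp(2tB)}$. The key input is that the loop ``full $2\pi$-rotation of a single symplectic plane'' represents the standard generator $c$ of $\pi_{1}(\mathit{Sp}(H))$, so $\tilde{R}^{\,2} = c$ and hence $\mu(\tilde{A}^{2}) = i(Q) - 2$; together with $\mu(\tilde{A}) = i(Q)-1$ from Example \ref{th:lifted-rotated-epsilon-example}, both sides of the identity again equal $-i(Q)$.

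The one delicate point is the last computation: one must correctly identify $\tilde{R}^{\,2}$ with the standard generator of $\pi_{1}(\mathit{Sp}(H))$, and match the resulting sign against the conventions implicit in Examples \ref{th:epsilon-example}--\ref{th:lifted-rotated-epsilon-example}. Once those conventions are lined up, the verification in all four components is essentially a bookkeeping exercise.
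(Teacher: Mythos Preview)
Your proposal is correct and follows essentially the same route as the paper: observe lift-independence and local constancy of both sides, reduce to representatives, and verify the identity using Examples~\ref{th:epsilon-example}/\ref{th:lifted-epsilon-example} for the ``exponential'' representatives and Examples~\ref{th:rotated-epsilon-example}/\ref{th:lifted-rotated-epsilon-example} for the rotated one, including the identification of $\tilde{R}^{2}$ with the covering generator. The only organizational difference is that you invoke additivity of $\kappa$ and $\mu$ to reduce explicitly to the four components of $\mathit{Sp}^{**}(\bR^{2})$, whereas the paper stays in dimension $2n$ and appeals to the fact (implicit in Lemma~\ref{th:connected}) that each connected component of $\mathit{Sp}^{**}(H)$ already contains a representative of the form in Examples~\ref{th:epsilon-example} or~\ref{th:rotated-epsilon-example}; since those examples are themselves computed via additivity, the two arguments are really the same.
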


\begin{proof}
Both sides of \eqref{eq:cz-equation} are independent of the choice of lift $\tilde{A}$. Because they are also locally constant, it is enough to verify the equality for one $A$ in each connected component of $\mathit{Sp}^{**}(H)$. But each such component contains a representative which is either as in Example \ref{th:epsilon-example} or Example \ref{th:rotated-epsilon-example}.

Consider first the situation of Example \ref{th:epsilon-example}, and choose the lift $\tilde{A}$ as in Example \ref{th:lifted-epsilon-example}. Then $\tilde{A}^2$ is the corresponding lift of $A^2 = \exp(2t B)$, hence
\begin{equation}
\mu(\tilde{A}^2) - 2\mu(\tilde{A}) = i(Q) - 2i(Q) = \kappa(A)-n.
\end{equation}

Now switch to Example \ref{th:rotated-epsilon-example}. If $A$ is as in that example, then $A^2 = \mathrm{exp}(2t B)$ is as in Example \ref{th:epsilon-example}. However, if we choose a lift $\tilde{A}$ as in Example \ref{th:lifted-rotated-epsilon-example}, then $\tilde{A}^2$ differs from the lift of $A^2$ given in Example \ref{th:lifted-epsilon-example} by the action of the generator of the covering group. This means that $\mu(\tilde{A}^2) = i(Q) - 2$, which again leads to
\begin{equation}
\mu(\tilde{A}^2) - 2\mu(\tilde{A}) = i(Q)-2 - 2(i(Q)-1) = \kappa(A)-n.
\end{equation}
% \mu(\tilde{A}^2) - 2\mu(\tilde{A}) + n
\end{proof}

Let $S$ be the pair-of-pants surface, as in Section \ref{subsec:product}. Any $A \in \mathit{Sp}^{**}(H)$ determines a flat symplectic vector bundle on $\bR \times S^1$, which has fibre $H$ and holonomy $A$ around the circle. Pulling this back via \eqref{eq:s-projection} yields a flat symplectic vector bundle $F \rightarrow S$, with holonomy $A$ around each of the two ends \eqref{eq:epsilon-in}, and holonomy $A^2$ around the remaining end \eqref{eq:epsilon-out}. Let's equip $F$ with a family of compatible almost complex structures $J_F$ on its fibres, which has the property that over each end, it is covariantly constant in $s$-direction (here, $(s,t)$ are the coordinates on the ends). We can then associate to this a Cauchy-Riemann operator
\begin{equation} \label{eq:d-bar-a}
D_A: \scrE^1 \longrightarrow \scrE^0, \quad D_A = \nabla^{0,1},
\end{equation}
which is the $(0,1)$-part of the covariant derivative (for the given flat connection $\nabla$ on $F$), from $\scrE^1 = W^{k,p}(F)$ to $\scrE^0 = W^{k-1,p}(\Omega^{0,1}_S \otimes F)$. Because neither $A$ nor $A^2$ have $1$ as an eigenvalue, $D_A$ is elliptic. 

\begin{lemma} \label{th:index-theory}
The Fredholm index of $D_A$ is $\mathrm{index}(D_A) = \kappa(A)-n$.
\end{lemma}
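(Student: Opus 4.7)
The plan is to apply the standard Riemann--Roch formula for Cauchy--Riemann operators on Riemann surfaces with cylindrical ends, and then invoke Lemma \ref{th:krein-conley-zehnder} to rewrite the resulting Conley--Zehnder expression as a Krein index. First I would fix a lift $\tilde A \in \widetilde{\mathit{Sp}}(H)$ of $A$; the projected path $t \mapsto \tilde A^t$ trivializes $F$ over each end $\delta^\pm$, and the doubled path $t \mapsto \tilde A^{2t}$ trivializes $F$ over $\epsilon^+$. These end trivializations are compatible with the fact that $F$, being a flat complex vector bundle over the open genus-$0$ surface $S$, is smoothly trivial globally, which allows explicit control of the relative first Chern number with respect to our chosen trivializations.

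The Riemann--Roch index formula then yields
\begin{equation*}
\mathrm{index}(D_A) = \mu(\tilde A^2) - 2\mu(\tilde A),
\end{equation*}
where the negative end $\epsilon^+$ contributes $+\mu(\tilde A^2)$ and each positive end $\delta^\pm$ contributes $-\mu(\tilde A)$, while the topological term $n \chi(S)$ cancels against the Chern correction for this specific choice of trivializations. Combining this with Lemma \ref{th:krein-conley-zehnder}, which asserts $\mu(\tilde A^2) - 2\mu(\tilde A) = \kappa(A) - n$, gives the claimed identity.

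The part that requires care is fixing the exact constant in the Riemann--Roch formula. My preferred approach is via deformation invariance together with additivity. Both sides of the claimed equality are locally constant on $\mathit{Sp}^{**}(H)$: the left-hand side by continuity of the Fredholm index in families of Fredholm operators (note that $D_A$ remains Fredholm throughout $\mathit{Sp}^{**}(H)$, since nondegeneracy of the asymptotics is precisely the condition cutting out this subset), and the right-hand side by Lemma \ref{th:krein}. Both sides are also additive under orthogonal decompositions $H = H_1 \oplus H_2$ with $A = A_1 \oplus A_2$. Using Lemma \ref{th:connected} together with the block-trading relations \eqref{eq:trade-plus}--\eqref{eq:circle-fusion}, it then suffices to verify the formula on a single simple representative, say $A$ a small anticlockwise rotation in $\bR^2$ (type (ii+)). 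In that case $\kappa(A) - n = 0$ and, via Example \ref{th:lifted-epsilon-example}, $\mu(\tilde A) = \mu(\tilde A^2) = 0$; a direct perturbation analysis of $D_A$ near the constant solution then confirms that its Fredholm index vanishes.

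I expect the main obstacle to be bookkeeping rather than anything conceptually deep: pinning down the precise normalization in the Riemann--Roch formula (in particular, computing $c_1^\tau(F)$ for the chosen end trivializations, and tracking the sign convention that distinguishes positive from negative ends). Once this is settled by the single model computation above, deformation invariance and additivity propagate the result to all of $\mathit{Sp}^{**}(H)$.
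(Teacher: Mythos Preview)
Your approach is essentially the same as the paper's: the paper's entire proof is the single sentence ``Using Lemma \ref{th:krein-conley-zehnder}, this becomes a special case of the index formula for Cauchy-Riemann operators on surfaces with tubular ends \cite[Proposition 3.3.10]{schwarz95}.'' Your first two paragraphs spell this out, and your additional deformation-plus-additivity verification of the constant is correct but redundant once one is willing to cite the general index theorem.
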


Using Lemma \ref{th:krein-conley-zehnder}, this becomes a special case the index formula for Cauchy-Riemann operators on surfaces with tubular ends \cite[Proposition 3.3.10]{schwarz95}.

\begin{lemma} \label{th:no-kernel}
$D_A$ is always injective.
\end{lemma}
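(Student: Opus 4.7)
The plan is to run an energy/Stokes argument in the spirit of the one used for the constant solutions in Section \ref{subsec:equi-define} (see \eqref{eq:zero-energy-0}), adapted to the pair-of-pants. Suppose $\xi \in \ker D_A$; the goal is to show $\xi \equiv 0$. First I would establish exponential decay of $\xi$ and $\nabla\xi$ on all three ends. Along each end, a parallel frame trivializes $F$ over a half-cylinder, and in those coordinates the asymptotic operator of $D_A$ has the form $-J_F(\partial_t - C)$, where the time-$1$ holonomy is $A$ on the two ends $\delta^\pm$ and $A^2$ on the end $\epsilon^+$. Since $\pm 1 \notin \mathrm{spec}(A)$, neither $A$ nor $A^2$ has $1$ as an eigenvalue, so both asymptotic operators have trivial kernel, and the standard asymptotic theory for Cauchy--Riemann operators on cylindrical ends yields exponential decay of $\xi$ together with all its derivatives.

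Next I would derive a global energy identity. In local conformal coordinates $(s,t)$ on $S$ and a flat frame of $F$, the operator $D_A$ reads $\partial_s + J_F \partial_t$, and a direct computation with $\langle u,v\rangle = \omega_F(u,J_F v)$ gives the pointwise identity $|\partial_s\xi|^2 + |\partial_t\xi|^2 = |D_A\xi|^2 + 2\omega_F(\partial_s\xi, \partial_t\xi)$. Since $\xi^*\omega_F = \omega_F(\partial_s\xi,\partial_t\xi)\,ds\wedge dt$, integrating over $S$ yields
\begin{equation*}
\textstyle \int_S |\nabla\xi|^2 \,\mathrm{vol}_S \;=\; \int_S |D_A\xi|^2 \,\mathrm{vol}_S \,+\, 2\int_S \xi^*\omega_F,
\end{equation*}
the direct analogue of \eqref{eq:zero-energy-0} for sections of the twisted bundle $F$.

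The heart of the proof is to show $\int_S \xi^*\omega_F = 0$. Since $S$ is homotopy equivalent to a wedge of two circles, $F$ deformation retracts onto a $1$-complex and $H^2(F;\bR) = 0$, so $\omega_F$ is exact on the total space $F$. Explicitly, letting $R$ denote the radial vector field along the fibers of $F$, the $1$-form $\Theta_F = \tfrac{1}{2}\iota_R \omega_F$ is a primitive of $\omega_F$ which is linear in the fiber coordinate, so $|\xi^*\Theta_F|$ is pointwise $O(|\xi|\,|\nabla\xi|)$. Stokes' theorem applied to an exhaustion of $S$ by compact subsurfaces $S_r$ gives $\int_S \xi^*\omega_F = \lim_{r\to\infty} \int_{\partial S_r} \xi^*\Theta_F$; each $\partial S_r$ consists of three circles of bounded length retreating into the ends, and the exponential decay from the first step forces the limit to vanish. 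Combining with the energy identity (and $D_A\xi=0$) gives $\nabla\xi \equiv 0$, so $\xi$ is parallel. Finally, the monodromy of $F$ around either of the ends $\delta^\pm$ is $A$, which has no nonzero fixed vector, forcing $\xi = 0$.

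The main subtle point is the Stokes step: one must choose a primitive $\Theta_F$ that genuinely vanishes along the zero section (not just any primitive), so that exponential decay of $\xi$ translates into vanishing of the boundary integrals at infinity. The radial contraction $\Theta_F = \tfrac{1}{2}\iota_R \omega_F$ does this automatically, which is why a plain application of exactness suffices.
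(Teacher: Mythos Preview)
Your argument is correct and follows essentially the same route as the paper: both derive the energy identity $\int_S |\nabla\xi|^2 = \int_S |D_A\xi|^2 + 2\int_S \xi^*\omega_F$, show that $\int_S \xi^*\omega_F = 0$, conclude that $\xi$ is covariantly constant, and then use the nontrivial holonomy (equivalently, decay at the ends) to force $\xi = 0$. The only difference is in justifying the vanishing of $\int_S \xi^*\omega_F$: you do it by an explicit Stokes argument with the radial primitive $\Theta_F = \tfrac{1}{2}\iota_R\omega_F$ and the exponential decay of $\xi$, whereas the paper observes more briefly that since $\omega_F$ is closed, the integral is unchanged under homotopies of $\xi$ (through sections with the same asymptotics) and hence equals its value on the zero section. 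Your version makes the boundary-at-infinity issue explicit, which is the honest content of the paper's one-line ``topological invariant'' remark.
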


\begin{proof}
This is an analogue of our previous discussion of \eqref{eq:constant-linearization}. The total space of $F$ carries a canonical closed two-form $\omega_F$, which fibrewise reduces to $\omega_H$. The counterpart of \eqref{eq:zero-energy-0} for a section $\xi \in \scrE^1$, where we again set $(k,p) = (2,2)$, is
\begin{equation}\textstyle 
\int_{S} \half |D_A \xi|^2 + \int_{S} \xi^*\omega_F = \int_S \half |\nabla \xi|^2,
\end{equation}
where the norms are taken with respect to the metric induced by $J_F$. The integral of $\xi^*\omega_F$ is a topological invariant (unchanged under deforming $\xi$), hence must vanish (since it's trivial for $\xi = 0$). Hence, if $D_A \xi = 0$ for some $\xi \in W^{k,p}(F)$, then $\xi$ must be covariantly constant, which (since it goes to zero at the ends) shows that it vanishes.
\end{proof}

%\newpage
\section{Local contributions\label{sec:local}}

This section contains the proof of Theorem \ref{th:main}. We want to prove that the map \eqref{eq:chain-product} becomes a quasi-isomorphism after tensoring with $\bK((h))$. The strategy is to show that the corresponding statement holds for the associated graded spaces of a suitable filtration, which in our case will be the action filtration. In a standard pseudo-holomorphic map setup, this would mean that we only have to count the solutions with zero energy, which are constant. Our situation is technically slightly more complicated, because we have perturbed the pseudo-holomorphic map equation by adding inhomogeneous terms; but it still true that the relevant contributions are local in nature, and can be determined in an essentially elementary way.

\subsection{Definition and general properties}
For our computations to be meaningful, we need to restrict the inhomogeneous terms to be small. As usual, we work with a fixed symplectic automorphism $\phi$ as in Setup \ref{th:setup-2}.

\begin{setup} \label{th:h-setup}
Fix a constant $\epsilon > 0$ such that the following holds:
\begin{align}
\label{eq:action-difference} & A_\phi(x^+) - A_\phi(x^-) \notin (0,2\epsilon) \quad \text{for all fixed points $x^{\pm}$ of $\phi$}, \\
\label{eq:action-difference-2} & A_{\phi^2}(y^+) - A_{\phi^2}(y^-) \notin (0,2\epsilon) \quad \text{for all fixed points $y^{\pm}$ of $\phi^2$.}
\end{align}
When choosing Data \ref{th:inhomogeneous-data}, we assume that it satisfies \eqref{eq:bound-h} with this particular constant. 
\end{setup}

For any fixed point $x$ of $\phi$ and sign $\sigma$, define
\begin{equation} \label{eq:c-sigma}
c_x^{\sigma} = \sum_i h^i \, \# \scrM_{\mathit{prod}}^{i,\sigma}(x,x,x) \in \bK[[h]].
\end{equation}
The sum $c_x = c_x^+ + c_x^-$ is called the local contribution of $x$ to the equivariant pair-of-pants product \eqref{eq:chain-product}.

\begin{lemma} \label{th:independence-1}
$c_x$ is independent of all auxiliary data that enter into the construction of the moduli space $\scrM_{\mathit{prod}}^{i,\sigma}(x,x,x)$.
\end{lemma}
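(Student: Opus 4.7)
The plan is a cobordism argument that exploits the action filtration very carefully. Given two choices of auxiliary data $\mathcal{D}_0$ and $\mathcal{D}_1$, both satisfying the smallness condition of Setup \ref{th:h-setup}, I would connect them by a smooth one-parameter family $(\mathcal{D}_r)_{r \in [0,1]}$. Because the condition \eqref{eq:bound-h} is open and path-connected (after small perturbation), the path can be arranged so that \eqref{eq:bound-h} holds for every $r$ with the same $\epsilon$, and so that the parametrized moduli spaces are regular. I would then introduce the parametrized moduli space $\scrM^{i,\sigma}_{\mathit{prod},[0,1]}(x,x,x)$ of triples $(r,w,u)$ where $w$ represents a point of $\scrP^{i,\sigma}$ and $u$ solves \eqref{eq:perturbed-equation} for the data $\mathcal{D}_r$. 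This is a smooth manifold of one higher dimension than $\scrM^{i,\sigma}_{\mathit{prod}}(x,x,x)$, with a compactification built exactly as in Section \ref{subsec:product}.

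The next step is to read off the identity obtained from the oriented boundary of the one-dimensional components, which schematically takes the form
\begin{equation*}
c_x^{(0)} + c_x^{(1)} = (\text{Floer breaking at the two inputs}) + (\text{equivariant Floer breaking at the output}) + (\text{breaking along } \partial \bar{\scrP}^{i,\sigma}).
\end{equation*}
The heart of the argument is showing that the right-hand side vanishes, and the mechanism is that Setup \ref{th:h-setup} forces every fixed point occurring in a broken configuration to be $x$ itself. For Floer breakings at the inputs, any trajectory contributing to $d_{J_\phi}(x)$ other than a constant one strictly decreases the action, and \eqref{eq:action-difference} prevents a non-constant trajectory from returning to $x$. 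For Floer breakings at the output, Addendum \ref{th:low-energy-differential} expresses $d_{\mathit{eq}}(x)$ as $h(\mathit{id}+\rho)(x)$ plus strictly action-increasing terms; since $x$ is a fixed point of $\phi$, it is fixed by $\rho$, so $(\mathit{id}+\rho)(x)=0$ in characteristic two. The action-increasing terms in $d_{\mathit{eq}}(x)$ land at $\phi^2$-fixed points $y$ with $A_{\phi^2}(y) > A_{\phi^2}(x)$, and no subsequent pair-of-pants piece can bring us back to the level $A_{\phi^2}(x) = A_\phi(x^+) + A_\phi(x^-)$ because of the bound \eqref{eq:inhomogeneous-action} combined with \eqref{eq:action-difference-2}.

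Finally, the $\partial \bar{\scrP}^{i,\sigma}$ strata \eqref{eq:pq-face}, \eqref{eq:qp-face} must be handled by the same kind of bookkeeping. A breaking along such a stratum corresponds to a pair of pair-of-pants moduli spaces joined at an intermediate $\phi^2$-fixed point $y$, possibly with an equivariant Floer piece of positive $h$-order interposed. The action identity \eqref{eq:inhomogeneous-action} applied to each of the two pair-of-pants pieces forces $A_{\phi^2}(y) = A_{\phi^2}(x)$, hence $y = x$ by \eqref{eq:action-difference-2}. The interposed Floer piece from $x$ to $x$ contributes, by the same analysis as for the output breaking, only the term $h(\mathit{id}+\rho)$, which vanishes on $x$. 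The remaining contribution at each stratum is a product of two unperturbed pair-of-pants counts at $(x,x,x)$, and in characteristic two these already combine to zero thanks to the same reflection symmetry that makes $\wp^{0,-}=0$ trivially.

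The hardest part will be organizing the combinatorics of the $\partial \bar{\scrP}^{i,\sigma}$ stratum contributions cleanly, since (unlike in the proof of $d_{\mathit{eq}}^2 = 0$ or of the fundamental identities \eqref{eq:p-plus-relation}, \eqref{eq:p-minus-relation}) one cannot simply appeal to a formal algebraic identity; rather, one must see term by term that all contributions really vanish \emph{identically}, not merely cancel in pairs. Once this is established, summing over $i$ and $\sigma$ gives $c_x^{(0)} = c_x^{(1)}$ in $\bK[[h]]$, proving the lemma.
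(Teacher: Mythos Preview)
Your setup (the parametrized moduli space over $[0,1]$, with the smallness bound maintained throughout) matches the paper's, and your instinct that the action/energy constraints force all intermediate fixed points to equal $x$ is correct. But the analysis of the $\partial\bar\scrP^{i,\sigma}$ strata contains a genuine error: a codimension-one stratum of $\bar\scrP^{i,\sigma}$ is of the form $\scrQ^{i_1,\sigma_1}\times\scrP^{i_2,\sigma_2}$ or $\scrP^{i_1,\sigma_1}\times\scrQ^{i_2,\sigma_2}$ (see \eqref{eq:stratification-2}), so the corresponding broken configuration has exactly \emph{one} pair-of-pants piece together with an equivariant Floer piece, not two pair-of-pants pieces. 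Your subsequent argument (``apply \eqref{eq:inhomogeneous-action} to each of the two pair-of-pants pieces'') therefore does not apply, and the appeal to ``the same reflection symmetry that makes $\wp^{0,-}=0$'' is misplaced, since $\wp^{0,-}=0$ holds simply because $\scrP^{0,-}=\emptyset$.

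The paper organizes the argument differently, in a way that avoids case-by-case bookkeeping. First, an energy argument shows in one stroke that \emph{every} non-principal component of a limit configuration must be constant: the total topological energy is $A_{\phi^2}(x)-2A_\phi(x)=0$, the principal component has energy $>-2\epsilon$, and any non-constant non-principal component has energy $\geq 2\epsilon$; these are incompatible unless all non-principal components are constant. Second, a dimension count (combining the regularity bound $i_j+\mathrm{ind}(D_{u_j})+1\geq 0$ with the constraint $i+\mathrm{ind}(D_{u_j})+1=1$ for one-dimensional components) forces $\sum_{k\neq j} i_k\leq 1$, so only the strata $\scrQ^{1,\sigma_1}\times\scrP^{i-1,\sigma_2}$ and $\scrP^{i-1,\sigma_1}\times\scrQ^{1,\sigma_2}$ can occur. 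Finally, the cancellation is not that each contribution vanishes identically, but that each principal component appears \emph{twice}, once for $\sigma_1=+$ and once for $\sigma_1=-$ (the constant non-principal piece lives in $\scrQ^{1,+}$ or $\scrQ^{1,-}$ equally well), and these pair up when one sums $c_x=c_x^++c_x^-$. Your remark that $(\mathit{id}+\rho)(x)=0$ is the algebraic shadow of this pairing, but it needs to be expressed geometrically as above to close the argument.
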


\begin{proof}
This is an argument involving moduli spaces with one additional parameter. The data under discussion are: the almost complex structures used to define the differentials on $\mathit{CF}^*(\phi)$ and $\mathit{CF}^*(\phi^2)$; the additional almost complex structures that enter into the differential on $\mathit{CF}^*_{\mathit{eq}}(\phi^2)$ (Data \ref{th:eq-data}); and the almost complex structures (Data \ref{th:j-prod-data}) as well as inhomogeneous terms (Data \ref{th:inhomogeneous-data}) required to construct $\wp$. Suppose that we have two choices of such data. We can interpolate between them by a one-parameter family of the same kind of choices, which satisfy the same bound \eqref{eq:bound-h} for all parameter values. 

To be more precise, denote the parameter by $r \in [0,1]$ (so that the two choices of data that we want to compare appear at the endpoints $r = 0,1$). For each value of $r$, we have spaces $\scrM_{\mathit{prod}}^{i,\sigma}(y,x^+,x^-)^{(r)}$ defined as before, and compactifications $\bar\scrM_{\mathit{prod}}^{i,\sigma}(y,x^+,x^-)^{(r)}$. The parametrized analogues are defined as
\begin{align}
& \textstyle \scrM_{\mathit{para}}^{i,\sigma}(y,x^+,x^-) = \bigsqcup_r \scrM_{\mathit{prod}}^{i,\sigma}(y,x^+,x^-)^{(r)}, \\
& \textstyle \bar\scrM_{\mathit{para}}^{i,\sigma}(y,x^+,x^-) = \bigsqcup_r \bar\scrM_{\mathit{prod}}^{i,\sigma}(y,x^+,x^-)^{(r)}.
\end{align}
The transversality theory for these spaces is a parametrized version of the previous one. In particular, while one cannot expect $\scrM_{\mathit{prod}}^{i,\sigma}(y,x^+,x^-)^{(r)}$ to be regular for all $r$, it is true that if the choices are made generically, $\scrM_{\mathit{para}}^{i,\sigma}(y,x^+,x^-)$ will be a smooth manifold with boundary (the boundary points are precisely the points where $r = 0,1$).

We now specialize to the case relevant to our statement,
\begin{equation} \label{eq:coincide}
y = x^+ = x^-.
\end{equation}
We want to consider the one-dimensional components of $\scrM_{\mathit{para}}^{i,\sigma}(y,x^+,x^-)$, and their closure inside the compactification. The aim is a standard cobordism argument: if the one-dimensional components were themselves compact, their number of boundary points would be even, and hence the expressions $c_x^\sigma$ derived from our two choices ($r = 0$ or $1$) would be the same, since they count those boundary points.

The general structure of a point in $\bar\scrM_{\mathit{para}}^{i,\sigma}(y,x^+,x^-)$ is as follows: there is a principal component, which is a solution of the perturbed version \eqref{eq:perturbed-equation} of \eqref{eq:pants-equation}. The remaining non-principal components are solutions of homogeneous Cauchy-Riemann equations, either \eqref{eq:parametrized-equation} or ordinary Floer trajectories. Because of Setup \ref{th:h-setup}, each of the non-principal components has energy at least $2\epsilon$, unless it is constant. The principal component has energy (in the topological sense, meaning the difference of the actions involved) greater than $-2\epsilon$. However, in our situation \eqref{eq:coincide}, the total sum of those energies is $A_{\phi^2}(x) - 2A_{\phi}(x) = 0$. This shows that any non-principal component is in fact constant.

Take a point of $\bar\scrM_{\mathit{para}}^{i,\sigma}(y,x^+,x^-)$, and consider the stratum \eqref{eq:stratification-2} in which the associated point of $\bar\scrP^{i,\sigma}$ lies. The previously mentioned principal component is a pair $(u_j,w_j)$. The fact that this component exists (given that the moduli spaces are regular in the parametrized sense) means that
\begin{equation} \label{eq:regular-bound}
i_j + \mathrm{index}(D_{u_j}) + 1 \geq 0.
\end{equation}
Here, $D_{u_j}$ is the linearized operator associated to $u_j$ as a perturbed pseudo-holomorphic map; $i_j$ is the dimension of the factor $\scrP^{i_j,\sigma_j}$ in \eqref{eq:stratification-2}; and the last term counts the additional degree of freedom introduced by the parameter. Now suppose that our point of $\bar\scrM_{\mathit{para}}^{i,\sigma}(y,x^+,x^-)$ lies in the closure of a one-dimensional component of $\scrM_{\mathit{para}}^{i,\sigma}(y,x^+,x^-)$. Using the previously mentioned fact that all the non-principal components are constant (hence their linearized operators have index $0$), one gets a dimension constraint
\begin{equation} \label{eq:dim-constraint}
i + \mathrm{ind}(D_{u_j}) + 1 =  1.
\end{equation}
Combining \eqref{eq:dim-constraint} with \eqref{eq:regular-bound} and the fact that $i = i_1 + \cdots + i_d$ in \eqref{eq:stratification-2}, one gets
\begin{equation}
\textstyle \sum_{k \neq j} i_k \leq 1.
\end{equation}
This leaves only two kinds of strata in $\bar\scrP^{i,\sigma}$ which can arise, namely
\begin{align}
& \scrQ^{1,\sigma_1} \times \scrP^{i-1,\sigma_2} \quad \text{and } \label{eq:0diff} \\
& \scrP^{i-1,\sigma_1} \times \scrQ^{1,\sigma_2}, \label{eq:0switch}
\end{align}
where $\sigma_1\sigma_2 = \sigma$. For \eqref{eq:0diff}, the principal component is an isolated point of $\scrM^{i-1,\sigma_2}_{\mathit{para}}(y,x^+,x^-)$. One combines it with a suitable constant non-principal component, and that (for different choices of $\sigma_1)$ yields a point of $\bar\scrM^{i,\sigma_2}_{\mathit{para}}(y,x^+,x^-)$ as well as a point of $\bar\scrM^{i,-\sigma_2}_{\mathit{para}}(\rho(y),x^+,x^-)$ (here, the notation is suggestive of the general picture, but of course in our context \eqref{eq:coincide}, $\rho(y) = y$). Both points in the compactified moduli space produced in this way are regular (which means that they are smooth boundary points of the compactification of one-dimensional components). Similarly, in \eqref{eq:0switch}, the principal component is an isolated point of $\scrM^{i-1,\sigma_1}_{\mathit{para}}(y,x^+,x^-)$; which gives rise to a point in $\bar\scrM^{i,\sigma_1}_{\mathit{para}}(y,x^+,x^-)$, as well as in $\bar\scrM^{i,-\sigma_1}_{\mathit{para}}(y,x^-,x^+)$.

The outcome of this consideration is that, while the one-dimensional part of $\scrM^{i,\sigma}_{\mathit{para}}(x,x,x)$ is not compact, its closure in $\bar\scrM^{i,\sigma}_{\mathit{para}}(x,x,x)$ adds boundary points which appear in pairs, and whose contributions therefore cancel.
\end{proof}

\begin{lemma} \label{th:independence-2}
$c_x$ depends only on the local behaviour of $\phi$ near $x$.
\end{lemma}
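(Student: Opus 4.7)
I would prove the lemma by combining the independence from auxiliary data granted by Lemma \ref{th:independence-1} with a confinement argument that forces every contributing solution to lie in an arbitrarily small neighborhood of $x$, where only the local behaviour of $\phi$ is visible.

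Suppose $\phi_0,\phi_1$ both satisfy Setup \ref{th:setup-2} and agree on an open neighborhood $U$ of the common fixed point $x$. Shrinking if necessary, I would choose a smaller neighborhood $V \subset U$ with $\phi_k^{\pm 1}(V), \phi_k^{\pm 2}(V) \subset U$ for $k=0,1$. By Lemma \ref{th:independence-1}, each $c_x^{\phi_k}$ may be computed with any admissible data satisfying Setup \ref{th:h-setup}. The plan is to choose, for each $k$, auxiliary data (Data \ref{th:eq-data}, \ref{th:j-prod-data}, \ref{th:inhomogeneous-data}) so that their restrictions to $V$ coincide across $k=0,1$ and the inhomogeneous term is supported in $V$. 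This is possible because on $V$ the periodicity conditions \eqref{eq:j-periodicity}, \eqref{eq:j-periodicity-2}, the symmetries imposed in Data \ref{th:eq-data} and \ref{th:j-prod-data}, and the Hamiltonian periodicity in Data \ref{th:inhomogeneous-data} only involve $\phi_k$ evaluated at points of $\phi_k^{\pm 1}(V) \subset U$, where $\phi_0=\phi_1$.

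With such aligned-over-$V$ data in hand, the perturbed Cauchy-Riemann equation \eqref{eq:perturbed-equation} has identical content for $k=0$ and $k=1$ when restricted to maps whose image lies in $V$. It therefore suffices to establish the following confinement claim: for auxiliary data whose deviation from a fixed local model at $x$ is sufficiently small, every element of $\scrM^{i,\sigma}_{\mathit{prod}}(x,x,x)$ takes values in $V$. Granting this, the two moduli spaces coincide set-theoretically, and the counts \eqref{eq:c-sigma} match, yielding $c_x^{\phi_0} = c_x^{\phi_1}$.

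The main obstacle is the confinement step. I would argue by contradiction using Gromov compactness. Take a sequence of admissible data whose $t$-dependent parts and inhomogeneous terms shrink to a ``trivial'' local model at $x$, together with solutions $u^k$ having all three asymptotes equal to $x$ but whose images escape $V$. Since the asymptotic action balance specializes to $A_{\phi^2}(x) - 2A_\phi(x) = 0$, the estimate from Addendum \ref{th:careful-action}, combined with $\|\partial_s H^k\|_{L^\infty}\to 0$, forces the geometric energy of $u^k$ to tend to zero. Exactness rules out bubbling, so a subsequence converges in $C^\infty_{\mathit{loc}}$ to a finite-energy solution of the limiting homogeneous equation; the zero-energy limit, combined with the nondegeneracy of $x$ as a fixed point of $\phi$ and of $\phi^2$ together with unique continuation, forces this limit to be the constant map at $x$, contradicting that $u^k$ leaves $V$. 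This exploits the same zero-total-energy mechanism underlying Lemma \ref{th:independence-1}, now applied to a degeneration of the data itself rather than a parameter within a fixed family.
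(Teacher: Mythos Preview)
Your proposal is correct and follows essentially the same approach as the paper: shrink the inhomogeneous term, invoke Gromov compactness to force solutions with limits $x=x^+=x^-=y$ to concentrate at the constant map $x$ (since the topological energy is zero and the geometric energy tends to zero), and then appeal to Lemma~\ref{th:independence-1} to justify computing $c_x$ with such small data. The paper phrases this as a single localization statement (for large $k$ the moduli space lies in any prescribed neighbourhood, order by order in $h$), whereas you frame it as an explicit comparison between two $\phi$'s agreeing near $x$ with aligned auxiliary data; the content is the same. Two small points worth tightening: the confinement contradiction should use the full compactification $\bar\scrM^{i,\sigma}_{\mathit{prod}}(x,x,x)^{(\infty)}$ (so that escape near the ends is also ruled out, not just on compact subsets), and you should note---as the paper does---that the argument proceeds for each fixed $i$ separately, since the required smallness of the inhomogeneous term depends on $i$.
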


\begin{proof}
Define a sequence of moduli spaces $\scrM^{i,\sigma}_{\mathit{prod}}(y,x^+,x^-)^{(k)}$, $k = 1,2,\dots$, where the almost complex structures are independent of $k$, but the inhomogeneous terms are multiplied with $1/k$. This can be done in such a way that all these moduli spaces are regular (since regularity is a generic condition for any given $k$, and countably many such conditions can be imposed at the same time). We also want to define a limiting case $\scrM^{i,\sigma}_{\mathit{prod}}(y,x^+,x^-)^{(\infty)}$, where the inhomogeneous terms are set to zero.

Suppose that we have a sequence of points in the moduli spaces defined above, for $k_1, k_2, \cdots \rightarrow \infty$. Appealing to Gromov compactness, this has a subsequence with a limit in $\bar\scrM^{i,\sigma}_{\mathit{prod}}(x,x,x)^{(\infty)}$. For energy reasons, all components of that limit are constant maps. Hence, if we fix a neighbourhood of $x$, all but finitely many elements of our sequence must have image contained in that neighbourhood. This shows that for fixed $i$ and for sufficiently large $k$, all points of $\scrM^{i,\sigma}_{\mathit{prod}}(x,x,x)^{(k)}$ are given by maps whose image is contained in our fixed neighbourhood. By Lemma \ref{th:independence-1}, we can use that moduli space to compute the coefficient of $h^i$ in $c_x$. This proves the statement (order by order in $h$).
\end{proof}

Note that Lemma \ref{th:independence-2} would be easier to see if we used virtual perturbation techniques, since then, taking the inhomogeneous term to be zero would be a viable choice in itself.

\begin{lemma} \label{th:c-grading}
$c_x$ is a $\bK$-multiple of $h^{n-\kappa(D\phi_x)}$, where $\kappa$ is the Krein index.
\end{lemma}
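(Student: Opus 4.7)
The plan is to reduce to a local linear model via Lemmas \ref{th:independence-1} and \ref{th:independence-2}, and then to appeal to the Fredholm index theory of Section \ref{subsec:index}. By Lemma \ref{th:independence-2}, we may compute $c_x$ in Darboux coordinates around $x$, replacing $M$ by $H = T_xM$ and, to leading order, $\phi$ by $A = D\phi_x \in \mathit{Sp}^{**}(H)$. Following the rescaling argument from that proof, I would scale the inhomogeneous terms by $1/k$ and let $k \to \infty$: Gromov compactness forces all contributing solutions into an arbitrarily small neighbourhood of $x$, so the count for large $k$ is determined by the linear model on $H$.

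In this linear model, the constant map $u \equiv 0$ is the unique solution with vanishing inhomogeneous term satisfying the prescribed asymptotics (by Lemma \ref{th:no-kernel}), and the linearisation of \eqref{eq:perturbed-equation} at it is precisely the operator $D_A$ treated in Lemmas \ref{th:index-theory} and \ref{th:no-kernel}. That operator is injective and Fredholm of index $\kappa(A) - n$, so its cokernel has dimension $n - \kappa(A) \geq 0$ (with non-negativity coming from \eqref{eq:conditions-kappa}). Over the Morse parameter space $\scrP^{i,\sigma}$, of dimension $i$, the implicit function theorem shows that a perturbed solution near the constant exists precisely when the associated inhomogeneous data project trivially into $\mathrm{coker}(D_A)$, which is a codimension-$(n - \kappa(A))$ condition on $w \in \scrP^{i,\sigma}$.

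Granting that the $w$-dependence of the auxiliary data (Data \ref{th:j-prod-data} and \ref{th:inhomogeneous-data}) can be chosen so that the induced map $\scrP^{i,\sigma} \to \mathrm{coker}(D_A)$ is as transverse as possible, three cases arise. For $i < n - \kappa(A)$, the moduli space near the constant solution is empty; for $i = n - \kappa(A)$, it consists of finitely many transversally cut out points; for $i > n - \kappa(A)$, it is a manifold of positive dimension $i - (n - \kappa(A))$, contributing no isolated points. Summing over $i$ and $\sigma$ then forces $c_x = \alpha \cdot h^{n-\kappa(A)}$ for some $\alpha \in \bK$, which is the assertion.

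The main obstacle is verifying the parametric transversality claim above: the inhomogeneous term $Y_z$ of Data \ref{th:inhomogeneous-data} does not depend on $w$ directly, so the $w$-dependence enters only through the almost complex structures $J_z$ of Data \ref{th:j-prod-data}, and one must check that varying $w$ through $\scrP^{i,\sigma}$ produces enough freedom to realise the expected transversality in $\mathrm{coker}(D_A)$. A related point is the identification of the parametrised linearisation at the constant solution with the concrete operator from Section \ref{subsec:index}, which requires reconciling \eqref{eq:inhomogeneous-pullback} with the flat-bundle setup used in Lemmas \ref{th:index-theory} and \ref{th:no-kernel} in appropriate weighted Sobolev spaces. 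Both are standard in flavour but account for the substantive content of the argument.
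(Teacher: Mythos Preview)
Your approach is genuinely different from the paper's and, as you yourself flag, the parametric transversality step is where it runs into trouble. The paper bypasses the perturbation analysis entirely: it observes that locally near $x$ one can equip $\phi$ with a grading, so the dimension formula \eqref{eq:dimension-formula-prod} holds as an honest integer equality $\dim \scrM_{\mathit{prod}}^{i,\sigma}(x,x,x) = \kappa(D\phi_x) - n + i$ (via Lemma~\ref{th:krein-conley-zehnder}). Since the moduli spaces are already known to be regular by the generic choice of auxiliary data, those with negative expected dimension are empty and those with positive dimension contribute no isolated points; only $i = n - \kappa(D\phi_x)$ survives. The appeal to Lemma~\ref{th:independence-2} is then only used to justify that local gradings suffice.

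Your route instead tries to reconstruct regularity by hand via the implicit function theorem near the constant solution, and here there is a concrete obstruction you did not name: at $u \equiv x$ one has $du = 0$, so varying $w$ (which enters only through $J_z$) has \emph{no first-order effect} on the equation, and the derivative of your proposed map $\scrP^{i,\sigma} \to \mathrm{coker}(D_A)$ vanishes identically at the base point. Transversality therefore cannot be obtained from first-order considerations in the $w$-direction, and a higher-order or more indirect argument would be needed. This is not fatal in principle, but it shows that your strategy is doing strictly more work than necessary: regularity is already granted as part of the construction, and once combined with a local $\bZ$-grading the conclusion is immediate without ever analysing the constant solution.
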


\begin{proof}
Suppose first that $c_1(M) = 0$, and that $\phi$ is a graded symplectic automorphism \cite{seidel99}. In that case, all Floer complexes are canonically $\bZ$-graded (including the equivariant one, where the formal variable $h$ has degree $1$). More concretely, at any fixed point $x$, the grading determines a preferred lift $\widetilde{D\phi}_x$ of the differential to the universal cover $\widetilde{\mathit{Sp}}(TM_x)$. The degree of the generator corresponding to $x$ is the Conley-Zehnder index $\mu(\widetilde{D\phi}_x)$. In this situation, the map \eqref{eq:chain-product} preserves the grading. More concretely, the dimension formula \eqref{eq:dimension-formula-prod} then holds as an equality in $\bZ$. By combining this with Lemma \ref{th:krein-conley-zehnder}, one sees that
\begin{equation} \label{eq:xxx-dimension}
\mathrm{dim} \, \scrM_{\mathit{prod}}^{i,\sigma}(x,x,x) = \mu(\widetilde{D\phi}_x^2) - 2\mu(\widetilde{D\phi}_x) + i = \kappa(D\phi_x) - n + i.
\end{equation}
Since the only nontrivial contribution to $c_x$ comes from the zero-dimensional spaces $i = n-\kappa(D\phi_x)$, we get the desired result.

In general, even though gradings may not exist globally, they always exist locally near $x$. From the proof of Lemma \ref{th:independence-2}, one sees that $c_x$ can be computed entirely from moduli spaces of maps which remain close to $x$. Those moduli spaces will have the same dimension as in \eqref{eq:xxx-dimension}, so the statement is true in general.
\end{proof}

\begin{lemma} \label{th:independence-3}
$c_x$ depends only on $D\phi_x$.
\end{lemma}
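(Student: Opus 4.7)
By Lemma \ref{th:independence-2}, the value of $c_x$ only depends on the behaviour of $\phi$ in an arbitrarily small neighbourhood of $x$. So, given two symplectic automorphisms $\phi_0,\phi_1$ (as in Setup \ref{th:setup-2}) with the same fixed point $x$ and with $D\phi_{0,x} = D\phi_{1,x} = A$, the plan is to connect them through a smooth family $\phi_r$ ($r \in [0,1]$) of symplectic automorphisms and then run a one-parameter cobordism argument analogous to the one in Lemma \ref{th:independence-1}.

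First I would construct the family. Choose Darboux coordinates identifying a neighbourhood of $x$ with a neighbourhood $U$ of the origin in $(\bR^{2n},\omega_\mathrm{std})$. On $U$, write $\phi_i = A\circ\psi_i$ with $\psi_i(0)=0$ and $D\psi_{i,0}=I$ (here $i=0,1$). Each $\psi_i$ is isotopic, among symplectic automorphisms fixing the origin to first order, to the identity via a path generated by a time-dependent Hamiltonian $H_{i,t}$ that vanishes to second order at the origin. Choose a cutoff $\chi$ supported in a small neighbourhood of $x$ where both $\phi_0$ and $\phi_1$ agree with the local models above, and define the interpolating Hamiltonians $H_{r,t}=\chi\cdot\bigl((1-r)H_{0,t}+rH_{1,t}\bigr)$, keeping $\phi_r$ equal to $\phi_0$ outside that neighbourhood. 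Let $\phi_r$ be the resulting symplectic automorphism. By construction $\phi_r(x)=x$ and $D\phi_{r,x}=A$ for all $r$; nondegeneracy of $x$ as a fixed point of $\phi_r$ and $\phi_r^2$ is automatic from $A\in\mathit{Sp}^{**}(TM_x)$; and the strict exactness condition \eqref{eq:exact-phi} is preserved, with a corresponding family $G_{\phi_r}$ satisfying $G_{\phi_r^2}(x)=2G_{\phi_r}(x)$, so that $A_{\phi_r^2}(x)=2A_{\phi_r}(x)$ holds throughout.

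Next I would choose a smooth family of auxiliary data (almost complex structures and inhomogeneous terms), depending on $r$, satisfying Setup \ref{th:h-setup} uniformly in $r$: the constant $\epsilon$ can be chosen once for the whole family, provided $r$-dependent perturbations away from $x$ are kept small enough, and the inhomogeneous Hamiltonians are rescaled by a small factor as in the proof of Lemma \ref{th:independence-2}. This yields parametrized moduli spaces
\[
\scrM^{i,\sigma}_{\mathit{para}}(x,x,x)=\bigsqcup_{r\in[0,1]}\scrM^{i,\sigma}_{\mathit{prod}}(x,x,x)^{(r)},
\]
which are smooth manifolds with boundary at $r=0,1$ for generic choices. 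Now the same accounting as in Lemma \ref{th:independence-1} applies: the global action balance $A_{\phi_r^2}(x)-2A_{\phi_r}(x)=0$ forces every non-principal component of any point in $\bar\scrM^{i,\sigma}_{\mathit{para}}(x,x,x)$ to be constant, so the only boundary phenomena that can appear in the compactification of a one-dimensional component are the $r=0,1$ endpoints and the pairwise-cancelling codimension-one faces of type \eqref{eq:0diff}, \eqref{eq:0switch}. Counting modulo $2$ gives $c_x(\phi_0)=c_x(\phi_1)$.

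The main obstacle, I expect, is less the soft topological framework than the verification that the family $\phi_r$ and the family of auxiliary data can be arranged simultaneously so that Setup \ref{th:h-setup} holds uniformly in $r$: one needs to ensure that the short-action intervals $(0,2\epsilon)$ remain empty for $\phi_r^2$ for every $r$, which (because the fixed points of $\phi_r^2$ other than $x$ may move or bifurcate as $r$ varies) requires keeping $\phi_r$ equal to $\phi_0$ outside a sufficiently small neighbourhood of $x$ and taking the inhomogeneous perturbations small enough; once that is in place, the locality principle of Lemma \ref{th:independence-2} guarantees that only maps staying close to $x$ contribute to $c_x$, and the cobordism argument above closes.
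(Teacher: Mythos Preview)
Your overall strategy---run the one-parameter cobordism argument of Lemma~\ref{th:independence-1} with the symplectomorphism itself varying---is exactly what the paper does. The paper's execution differs in one structural way: rather than directly connecting two automorphisms $\phi_0,\phi_1$ with the same linearization, it deforms a single $\phi$ to an automorphism $\phi_1^{(k)}$ that is \emph{linear} in Darboux coordinates near $x$, and then invokes Lemma~\ref{th:independence-2} once more to conclude that $c_x$ for the linearized map is determined by $D\phi_x$. This sidesteps the need to patch two different global automorphisms together, and makes the target of the deformation canonical.

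The genuine gap in your proposal is the point you yourself flag as ``the main obstacle'': ensuring that no spurious fixed points of $\phi_r$ or $\phi_r^2$ appear during the interpolation, so that Setup~\ref{th:h-setup} holds with a uniform $\epsilon$. You note that this requires keeping the deformation supported in a small enough neighbourhood, but you do not supply an argument. The paper handles this with a concrete device: it constructs a \emph{sequence} of isotopies $(\phi_t^{(k)})_{t\in[0,1]}$, each supported in a ball of radius $1/k$ around $x$, with $\phi_t^{(k)}\to\phi$ in $C^1$ uniformly in $t$ as $k\to\infty$. If new fixed points $x^{(k)}$ arose, they would lie in the shrinking balls and hence converge to $x$; normalizing $x^{(k)}/\|x^{(k)}\|$ and passing to a limit produces a unit vector in $\ker(I-D\phi_x)$, contradicting nondegeneracy. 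The same reasoning controls $2$-periodic points. Once $k$ is large enough the fixed point set is frozen, action values move only slightly, and the cobordism argument of Lemma~\ref{th:independence-1} goes through verbatim.

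A smaller issue: your construction of the interpolating family via $H_{r,t}=\chi\cdot((1-r)H_{0,t}+rH_{1,t})$ is not quite right as written---the time-$1$ flow of $\chi H_{0,t}$ need not reproduce $\psi_0$ even where $\chi\equiv 1$, since the flow may leave that region, and it is not clear what $\phi_r$ is meant to be globally. The paper's ``linearize $\phi$'' approach avoids this bookkeeping entirely.
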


\begin{proof}
Fix a neighbourhood of $x$, and identify it symplectically with a neighbourhood of the origin in the symplectic vector space $H = TM_x$. For any $k = 1,2,\dots$, one can find a Hamiltonian isotopy $(\phi_t^{(k)})$, $t \in [0,1]$, such that the following holds:
\begin{align}
& \phi_0^{(k)} = \phi; \\
& \phi_t^{(k)}(x) = x, \;\; \text{ and } \;\; (D\phi_t^{(k)})_x = D\phi_x; \\
& \text{the isotopy is constant (in $t$) outside a ball of size $1/k$ around $x$;} \\
& \phi_1^{(k)} \text{ is linear near $x$ in our local coordinates;} \label{eq:locally-linear} \\
& \text{as $k \rightarrow \infty$, $\phi_t^{(k)}$ $C^1$-converges to $\phi$, uniformly in $t$.} \label{eq:c1-convergence}
\end{align}
To clarify, in \eqref{eq:locally-linear} we are not saying anything about the size of the neighbourhood in which $\phi_1^{(k)}$ is linear. We omit the details of the construction of the isotopies, which is elementary.

We claim that, as long as $k$ is sufficiently large, the fixed points of $\phi_t^{(k)}$ remain the same for all $t$. By construction, all fixed points of $\phi$ remain fixed points of $\phi_t^{(k)}$, and we only need to worry about new fixed points which may arise. Suppose that (maybe after passing to a subsequence of $k$) we have such new fixed points $x^{(k)}$. Necessarily, these converge to $x$ in the limit $k \rightarrow \infty$. In our local coordinates where $x$ is the origin, the normalized vectors $x^{(k)}/\|x^{(k)}\|$ have a subsequence converging to a unit length vector $\xi \in TM_x$. Because the $x^{(k)}$ as well as the $x$ are fixed points, and \eqref{eq:c1-convergence} holds, it follows that $D\phi_x(\xi_x) = \xi_x$, in contradiction to nondegeneracy. This establishes our claim. Moreover, the action of the fixed points changes under the isotopy only by an amount which goes to zero as $k \rightarrow \infty$. Hence, for $k \gg 0$, one can arrange that \eqref{eq:action-difference} applies to all $\phi_t^{(k)}$, with a bound $\epsilon$ which is independent of $t$. Parallel results hold for $2$-periodic points.

With this in mind, the same argument as in the proof of Lemma \ref{th:independence-1} (but this time varying the symplectomorphism as well) can be used to show that $c_x$ is the same for $\phi$ and for $\phi_1^{(k)}$. An application of Lemma \ref{th:independence-2} concludes the argument, since the local structure of $\phi_1^{(k)}$ near $x$ is completely determined by $D\phi_x$.
\end{proof}

\begin{lemma} \label{th:independence-4}
$c_x$ depends only on the sign of $\mathrm{det}(I - D\phi_x)$ and the Krein index $\kappa(D\phi_x)$.
\end{lemma}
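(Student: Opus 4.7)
The plan is to combine Lemma \ref{th:independence-3} with Lemma \ref{th:connected}. By Lemma \ref{th:independence-3}, $c_x$ depends only on $A := D\phi_x$. Nondegeneracy of $x$ as a fixed point of both $\phi$ and $\phi^2$ forces $\{\pm 1\} \cap \mathrm{spec}(A) = \emptyset$, so $A \in \mathit{Sp}^{**}(TM_x)$ in the sense of \eqref{eq:starstar}. By Lemma \ref{th:connected}, the pair $(\mathrm{sign}(\det(I-A)), \kappa(A))$ separates the connected components of $\mathit{Sp}^{**}(H)$, where $H = TM_x$. It therefore suffices to show that $A \mapsto c_x$ is locally constant on $\mathit{Sp}^{**}(H)$.

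To prove local constancy, I would take a smooth short path $(A_r)_{r \in [0,1]}$ in $\mathit{Sp}^{**}(H)$ with $A_0 = D\phi_x$, and build a one-parameter family of symplectic automorphisms $\phi_r$ of $M$ with $D(\phi_r)_x = A_r$, all satisfying Setup \ref{th:setup-2}. Concretely, having chosen Darboux coordinates around $x$, one cuts off a Hamiltonian isotopy realizing $(A_r)$ so that $\phi_r = \phi$ outside a small ball $B$ around $x$ and $\phi_r$ is strictly linear, equal to $A_r$, on a smaller ball $B' \subset B$; this is the construction from the proof of Lemma \ref{th:independence-3}, now parametrized by $r$. Provided the path is kept short, the implicit function theorem applied to the finitely many other (nondegenerate) fixed points of $\phi$ and $\phi^2$ ensures that all $\phi_r$ and $\phi_r^2$ stay nondegenerate with the same fixed point sets as $\phi$ and $\phi^2$, and that a single $\epsilon>0$ makes Setup \ref{th:h-setup} apply uniformly across the family.

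I would then run the parametrized cobordism argument from Lemma \ref{th:independence-1}, treating $r$ as an additional parameter alongside the Floer auxiliary data. In our case $y = x^+ = x^- = x$, the total topological energy of any solution is $A_{\phi_r^2}(x) - 2A_{\phi_r}(x) = 0$, while each non-principal limit component has energy either zero (constant) or $\geq 2\epsilon$; the latter is ruled out by \eqref{eq:action-difference}--\eqref{eq:action-difference-2}. All non-principal components are therefore constant, and the dimension counting \eqref{eq:regular-bound}--\eqref{eq:dim-constraint} restricts the codimension-one boundary strata of the relevant one-dimensional parametrized moduli spaces to those of types \eqref{eq:0diff}--\eqref{eq:0switch}, which cancel in pairs. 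This yields $c_x(A_0) = c_x(A_1)$.

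The main obstacle is the parametric construction of $\phi_r$ with the right linearization near $x$ while preserving global nondegeneracy and a uniform action-gap, but since the deformation is confined to an arbitrarily small ball and can be made arbitrarily $C^1$-small, this is a routine extension of Lemma \ref{th:independence-3}. As a consistency check, Lemma \ref{th:c-grading} combined with the fact that $\kappa$ is itself locally constant (Lemma \ref{th:krein}) already tells us that only a single coefficient in $h$ can be nonzero, so there really is only one scalar to pin down in each component.
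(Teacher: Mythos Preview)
Your overall strategy matches the paper's exactly: reduce to showing that $A \mapsto c_x$ is locally constant on $\mathit{Sp}^{**}(H)$, then invoke Lemma~\ref{th:connected}. The only difference is in how local constancy is established. You propose to re-run the full parametrized cobordism of Lemma~\ref{th:independence-1} with the extra deformation parameter $r$; this works, but the paper takes a shorter route. Since Lemma~\ref{th:c-grading} tells us that only a single value of $i$ contributes, the relevant moduli space $\scrM^{i,\pm}_{\mathit{prod}}(x,x,x)$ is zero-dimensional, compact, and regular at $r=0$; any sufficiently small perturbation of the data (in particular a small change in $D\phi_x$) leaves such a moduli space unchanged by the implicit function theorem. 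So the paper gets local constancy essentially for free, without needing to control boundary strata or invoke the energy/action cancellation argument again. Your version is not wrong, just heavier than necessary.
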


\begin{proof}
Consider a deformation $A_t$ ($0 \leq t \leq 1$) of $A = D\phi_x$ inside the linear symplectic group. One can find a Hamiltonian isotopy $(\phi_t)$ during which $x$ remains a fixed point, such that $\phi_0 = \phi$, and $(D\phi_t)_x = A_t$ for small $t$. It is easy to see that the local contribution $c_x$ for $\phi_t$ remains the same for small $t$: after all, for $t = 0$ we define $c_x$ by counting points in a zero-dimensional compact and regular moduli space $\scrM_{\mathit{prod}}^{i,\pm}(x,x,x)$ (where $i$ is determined by Lemma \ref{th:c-grading}), and a sufficiently small perturbation will not affect the structure of that space. 

Note that we already knew that $c_x$ depends only on $D\phi_x$. We have now shown that it remains constant if we deform $D\phi_x$ slightly. Hence, it is a locally constant function on the open subset \eqref{eq:starstar} of the linear symplectic group. Lemma \ref{th:connected} now yields the desired result.
\end{proof}

Combining Lemmas \ref{th:c-grading} and \ref{th:independence-4}, we can write
\begin{equation} \label{eq:universal-constants}
c_x = h^{n-\kappa(D\phi_x)} c_{s,k},
\end{equation}
where $(s,k) \in \{\pm 1\} \times \bZ$ is the image of $D\phi_x$ under \eqref{eq:invariant}. The $c_{s,k} \in \bK$ are universal constants, depending only on $(s,k)$ and the dimension of the ambient symplectic manifold. We will show the following:

\begin{proposition} \label{th:key}
$c_{s,k} = 1$ for all $(s,k)$.
\end{proposition}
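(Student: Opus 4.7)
The plan is to reduce the computation to the simplest possible model in each class, and then identify the local contribution with a Morse-theoretic count where it can be evaluated explicitly. By Lemmas \ref{th:independence-1}--\ref{th:independence-4}, $c_{s,k}$ depends only on the connected component of $\mathit{Sp}^{**}(H)$, so it suffices to exhibit, for each allowed $(s,k)$, a single $\phi$ with a fixed point $x$ of the right type, and compute $c_x$ for any convenient choice of auxiliary data.

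For the explicit model, I will take $\phi$ linear in a small neighbourhood of $x$, with $D\phi_x = R\cdot\exp(tB)$, where $B$ is the Hamiltonian endomorphism of a nondegenerate quadratic form $Q$ on $H = TM_x$ of Morse index $i(Q) = n-k$, $R$ is the identity for $s=+1$ (Example \ref{th:epsilon-example}) or the half-rotation in the first coordinate plane for $s=-1$ (Example \ref{th:rotated-epsilon-example}), and $t>0$ is small. Choosing the $S^\infty$-family of almost complex structures of Data \ref{th:eq-data}, \ref{th:j-prod-data} and the inhomogeneous terms from Data \ref{th:inhomogeneous-data} to scale so that everything degenerates as $t\to 0$, a standard PSS-type adiabatic argument will identify $\scrM_{\mathit{prod}}^{i,\sigma}(x,x,x)$ with an equivariant graph-flow moduli space on $H$, that is, with solutions of \eqref{eq:y-graph-equation} on $H$ parametrized by $\bar\scrP^{i,\sigma}$ and built from the Morse function $Q$.

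With this identification in place, Lemma \ref{th:index-theory} combined with Lemma \ref{th:krein-conley-zehnder} ensures that zero-dimensional components occur exactly at $i=n-\kappa = i(Q)$, confirming that the nonzero term of $c_x$ sits in the expected power of $h$ predicted by Lemma \ref{th:c-grading}. At that level, the Morse complex of $Q$ on $H$ is one-dimensional, generated by $x$, and the equivariant Morse product \eqref{eq:y-equi} applied to $x\otimes x$ reproduces the total Steenrod operation at the single generator. The coefficient of $h^{i(Q)}$ is the top Steenrod square $\mathit{Sq}^{i(Q)}(x) = x^2$, which equals $x$ itself in this one-generator complex, in agreement with \eqref{eq:total-sq} and the topological fact that $\mathit{Sq}^0 = \mathit{id}$. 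This yields $c_{s,k} = 1$.

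The main obstacle will be the adiabatic/localization step: one has to rigorously control the perturbed pseudo-holomorphic solutions of \eqref{eq:perturbed-equation} in the small-$t$ limit, uniformly over the $(i-1)$-parameter family coming from $\scrP^{i,\sigma}$, and uniformly in the equivariant parameter $v \in S^\infty$. This will require $C^0$-concentration estimates for solutions near $x$, exclusion of extraneous solutions (using the action bound \eqref{eq:bound-h} in Setup \ref{th:h-setup} to rule out any nontrivial energy contribution), and a parametrized gluing/bijection argument. All of this must be compatible with the symmetry constraints $\rho_*J_{\mathit{eq},v}=J_{\mathit{eq},-v}$ and the analogous properties of $J_{\mathit{left}}$ and $J_{\mathit{right}}^\pm$, so that the limiting structure is indeed the equivariant graph product of Section \ref{sec:morse}. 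Once this adiabatic identification is established, the final Morse count on a single critical point is immediate and the universal constants $c_{s,k}$ all equal $1$.
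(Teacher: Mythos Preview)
Your route is genuinely different from the paper's, and the paper explicitly chose not to go this way. It computes only the single value $c_{+1,n}$, where $n-\kappa=0$ so that no equivariant parameters enter: the relevant space is the ordinary pair-of-pants moduli space $\scrM^{0,+}_{\mathit{prod}}(x,x,x)$, which consists of the single constant map, regular by Lemmas~\ref{th:index-theory} and~\ref{th:no-kernel} (Lemma~\ref{th:cheese}). All other values are then deduced from \emph{relations} among the $c_{s,k}$, extracted from the chain-map identity for $\wp$ in local models with more than one fixed point. Two Morse-type birth--death models give $c_{s,k}=c_{-s,k+1}$ (Lemmas~\ref{th:constant-1} and~\ref{th:constant-2}), and a two-dimensional model containing a genuine period-$2$ orbit gives $c_{+1,k}=c_{-1,k}$ (Lemma~\ref{th:constant-3}). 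These relations propagate the one known value across the whole range~\eqref{eq:conditions-kappa}. The chain-map property does the work that your adiabatic limit would have to do.

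Your proposal, as written, has two substantive gaps beyond the acknowledged adiabatic limit (which, incidentally, is only an expectation in the paper: see Example~\ref{th:pss}). First, for $s=-1$ you take $\phi = R\exp(tB)$ with $R$ a half-rotation; this is not $C^1$-close to the identity for any $t$, so no PSS-type degeneration of Floer's equation to a gradient-graph equation is available, and the phrase ``built from the Morse function $Q$'' has no meaning here. Second, even for $s=+1$ the endgame is circular: a one-generator Morse complex concentrated in degree $i(Q)>0$ is not the cochain complex of any space, so you cannot import the topological identity $\mathit{Sq}^0=\mathit{id}$; you would have to compute the equivariant graph product on that complex from scratch, which is exactly the local contribution $c_x$ you set out to determine. (There is also an indexing slip: by~\eqref{eq:total-sq} the coefficient of $h^{|x|}$ in the total Steenrod square is $\mathit{Sq}^0(x)$, not $\mathit{Sq}^{|x|}(x)=x^2$.)
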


The proof will take up the rest of Section \ref{sec:local}; but before embarking on that task, we want to explain how Proposition \ref{th:key} implies Theorem \ref{th:main}. We will work under the following technical assumption:

\begin{setup} \label{th:setup-4}
Let $\phi$ be as in Setup \ref{th:setup-2}, and with the following additional property. For any fixed points $x^\pm$ of $\phi$, and any fixed point $y$ of $\phi^2$,
\begin{equation} \label{eq:non-coincident}
A_{\phi^2}(y) - A_\phi(x^+) - A_\phi(x^-) \neq 0, \quad
\text{except if $x^- = x^+ = y$.}
\end{equation}
\end{setup}

For applications, one needs to know that this is generically satisfied.

\begin{lemma}
Given any $\phi$ as in Setup \ref{th:setup-2}, there is a small Hamiltonian perturbation, supported in the interior of $M$, so that the perturbed automorphism satisfies \eqref{eq:non-coincident}.
\end{lemma}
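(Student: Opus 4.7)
The approach is a finite-dimensional genericity argument. Write $F \subset M$ for the (finite) fixed point set of $\phi^2$, which contains the fixed points of $\phi$. Parametrize perturbations as $\tilde\phi_H := \phi \circ \psi_H$, where $\psi_H$ is the time-$1$ map of the autonomous Hamiltonian flow of a function $H$ supported in the interior of $M$. For $H$ sufficiently small, $\tilde\phi_H$ still satisfies Setup \ref{th:setup-2}, and both its fixed points and those of $\tilde\phi_H^2$ depend smoothly on $H$ by the implicit function theorem. The plan is to show that for a generic small $H$, the finitely many scalars
\[
D_{(x^+,x^-,y)}(H) := A_{\tilde\phi_H^2}(y_H) - A_{\tilde\phi_H}(x^+_H) - A_{\tilde\phi_H}(x^-_H),
\]
one for each non-coincident triple $(x^+,x^-,y) \in F^3$ with $(x^+,x^-,y) \neq (y,y,y)$, can all be made nonzero simultaneously.

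To obtain exact formulas, I would restrict to a finite-dimensional family $V$ of perturbations that literally preserve $F$. Choose pairwise disjoint open neighborhoods $U_p \subset M \setminus \partial M$ of the points $p \in F$ and bump functions $\beta_p$ supported in $U_p$ with $\beta_p(p) = 1$ and $d\beta_p(p) = 0$, and set $V = \{ H = \sum_{p \in F} h_p \beta_p \} \cong \bR^F$. For $H \in V$, every $p \in F$ is a critical point of $H$, so $\psi_H(p) = p$, and the fixed point sets of $\tilde\phi_H$ and $\tilde\phi_H^2$ literally equal those of $\phi$ and $\phi^2$. Using the composition rule $G_{\alpha\beta} = G_\beta + G_\alpha \circ \beta$ for generating functions of exact symplectomorphisms, together with the identity $G_{\psi_H}(p) = -H(p)$ at a critical point $p$ of $H$ (immediate from the integral formula $G_{\psi_H} = \int_0^1 (\iota_{X_H}\theta - H)(\psi_H^r(\cdot))\,dr$ and the fact that the integrand is constantly $-H(p)$ at $p$), one computes directly the exact identities
\[
A_{\tilde\phi_H}(p) = A_\phi(p) + h_p, \qquad
A_{\tilde\phi_H^2}(y) = A_{\phi^2}(y) + h_y + h_{\phi(y)},
\]
with the convention $h_{\phi(y)} = h_y$ when $\phi(y) = y$.

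With these formulas, each $D_{(x^+,x^-,y)}$ restricted to $V$ is an affine-linear function of $(h_p)_{p \in F}$ with linear part
\[
L_{(x^+,x^-,y)}(H) = h_y + h_{\phi(y)} - h_{x^+} - h_{x^-}.
\]
The heart of the argument is to verify that $L_{(x^+,x^-,y)}$ is not identically zero whenever $(x^+,x^-,y) \neq (y,y,y)$. If $\phi(y) = y$, then $L = 2h_y - h_{x^+} - h_{x^-}$, which vanishes identically only if $x^+ = x^- = y$, excluded by hypothesis. If $y$ has minimal period $2$, then $y$ and $\phi(y)$ are distinct and neither is fixed by $\phi$, whereas $x^\pm$ are $\phi$-fixed; hence $\{y, \phi(y)\} \cap \{x^+, x^-\} = \emptyset$, and $L$ has a nonzero coefficient at the independent variable $h_y$. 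Either way, $\{D_{(x^+,x^-,y)} = 0\}$ is a proper affine hyperplane in $V$.

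Since $F$ is finite, the union of these hyperplanes over all excluded triples is a proper subvariety of $V$, with complement open and dense. Picking $H$ in this complement and sufficiently close to $0$ (so that nondegeneracy, which is an open condition, is preserved) gives the desired perturbation. The step I expect to require most care is the exact action computation: the choice of $V$ was made precisely so that no envelope-theorem hedging is needed, since the fixed points do not move at all under these perturbations; one simply has to verify the composition formulas for generating functions carefully. Once that is in place, the genericity conclusion reduces to the elementary linear-algebra observation above.
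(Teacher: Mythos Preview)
Your argument is correct and follows essentially the same strategy as the paper's proof. Both restrict to Hamiltonian perturbations for which the fixed point set of $\phi^2$ is literally unchanged, compute the resulting (linear) change of actions at those fixed points, and then conclude by a finite-dimensional genericity argument. The only differences are presentational: the paper works with time-dependent Hamiltonians $H \in \scrH_\phi$ whose differential vanishes at all fixed points of $\phi^2$, phrases the key step as surjectivity of a linear map to $\bR^{p_1+p_2}$ (with $p_1$ fixed points and $p_2$ genuine $2$-periodic orbits), and then observes that one may pass to a finite-dimensional subspace; your choice of autonomous bump functions at the points of $F$ is precisely such a subspace, and your case-by-case verification that $L_{(x^+,x^-,y)} \not\equiv 0$ is equivalent to that surjectivity.
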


\begin{proof}
For any $H \in \scrH_\phi$ (in the notation from Data \ref{th:inhomogeneous-data}), one can consider the perturbed action functional
\begin{equation} \label{eq:tildea-1}
A_{\phi,H}(x) = A_\phi(x) + \textstyle \int_0^1 H_t(x(t)) \, \mathit{dt}.
\end{equation}
This is equivalent to the ordinary action functional $A_{\tilde{\phi}}$ for a suitable Hamiltonian perturbation of $\phi$, determined by $H$ (``equivalent'' means that the two functionals correspond to each other under an identification $\scrL_\phi \iso \scrL_{\tilde{\phi}}$). In the same way, $A_{\tilde{\phi}^2}$ corresponds to
\begin{equation} \label{eq:tildea-2}
A_{\phi^2,H}(y) = A_{\phi^2}(y) + \textstyle \int_0^2 H_t(y(t)) \, \mathit{dt}.
\end{equation}

We will allow only the subspace $\scrH_{\phi}^{\mathit{fixed}} \subset \scrH_\phi$ of those $H$ such that $dH_t$ vanishes at all fixed points of $\phi^2$. This (and nondegeneracy) implies that as long as $H$ is $C^2$-small, the critical points of $A_{\phi^2,H}$ remain the same, which means constant loops at the fixed points of $\phi^2$. The same then holds for $\phi$ as well. To prove the desired result, one has to find a small $H \in \scrH_{\phi}^{\mathit{fixed}}$ such that: 
\begin{itemize} \itemsep.5em
\item $A_{\phi^2,H}(y) - A_{\phi,H}(x^+) - A_{\phi,H}(x^-) \neq 0$ whenever $y$ is a periodic orbit of period exactly two, and $x^+ \neq x^-$ are fixed points;
\item $A_{\phi^2,H}(y) - 2 A_{\phi,H}(x) \neq 0$ whenever $y$ is a periodic orbit of period exactly two, and $x$ is a fixed point (this implies \eqref{eq:non-coincident} for $x^+ = x^- = x$, and $y$ as given);
\item $2A_{\phi,H}(x) - A_{\phi,H}(x^+) - A_{\phi,H}(x^-) \neq 0$ whenever $x,x^+,x^-$ are three different fixed points (this implies \eqref{eq:non-coincident} for $y = x$, and $x^\pm$ as given);
\item Any two different fixed points have different values of $A_{\phi,H}$ (this implies \eqref{eq:non-coincident} for the case where $x^+ \neq x^-$, but $y$ is one of the $x^\pm$; it also takes care of the case where $x^+ = x^-$, and $y$ is a different fixed point of $\phi$).
\end{itemize}

To help formulate the technical argument, let's introduce a linear map
\begin{equation} \label{eq:action-critical}
\scrH_{\phi}^{\mathit{fixed}} \longrightarrow \bR^{p_1+p_2},
\end{equation}
where $p_1$ is the number of fixed points of $\phi$, and $p_2$ the number of periodic orbits of period exactly two (which means, excluding the fixed points). The components of \eqref{eq:action-critical} are: $A_{\phi,H}(x) - A_{\phi}(x)$ at each fixed point $x$; and $A_{\phi^2,H}(y) - A_{\phi^2}(y)$ for a representative $y$ of each two-periodic orbit. Inspection of the formulae \eqref{eq:tildea-1}, \eqref{eq:tildea-2} shows that \eqref{eq:action-critical} is onto. All the desired properties stated above can be formulated as having to avoid the preimage of certain affine submanifolds under \eqref{eq:action-critical}, hence are generic conditions. Note that issues of the functional-analytic nature of $\scrH_{\phi}^{\mathit{fixed}}$ are irrelevant here, since one can replace it by a finite-dimensional subspace such that the restriction of \eqref{eq:action-critical} to that subspace is onto.
\end{proof}

Fix a constant $\epsilon > 0$ which satisfies \eqref{eq:action-difference}, \eqref{eq:action-difference-2}, as well as the following strengthened version of \eqref{eq:2h}: 
\begin{equation}
A_{\phi^2}(y) - A_\phi(x^+) - A_\phi(x^-) \notin (-2\epsilon,2\epsilon), \quad
\text{except if $x^- = x^+ = y$.}
\end{equation}
When constructing $\wp$, choose the inhomogeneous terms to be correspondingly small.

Define a filtration of $\mathit{CF}^*(\phi) \otimes \mathit{CF}^*(\phi)$, so that $F^d$ is generated by expressions $x^+ \otimes x^-$ where $A_\phi(x^+) + A_\phi(x^-) \geq 2\epsilon d$. The condition \eqref{eq:action-difference} implies that the Floer differential maps $F^d$ to $F^{d+1}$. This induces a filtration of the Tate complex $\hat{C}^*(\bZ/2; \mathit{CF}^*(\phi) \otimes \mathit{CF}^*(\phi))$, which is preserved by its differential. In fact, the only part of the Tate differential which does not strictly increase the filtration is that which comes from group cohomology. 

The next part of the argument repeats Addendum \ref{th:low-energy-differential} in a slightly more precise form. Define a filtration of $\mathit{CF}^*(\phi^2)$, so that $F^d$ is generated by those $y$ for which $A_{\phi^2}(y) \geq 2\epsilon d$. Again, the Floer differential strictly increases the filtration, because of \eqref{eq:action-difference-2}. The induced filtration of $\mathit{CF}^*_{\mathit{eq}}(\phi^2)$ is also compatible with the differential. More precisely, the only term in the equivariant differential which does not strictly increase the filtration is $h(\mathit{id} + \rho)$, where $\rho$ is the naive $\bZ/2$-action on $\mathit{CF}^*(\phi^2)$.

Consider the map obtained from $\wp$ after tensoring with $\bK((h))$. We know from Addendum \ref{th:careful-action} that it is compatible with the filtrations on both sides. In fact, because of \eqref{eq:non-coincident}, it follows that all contributions to $\wp$ except the local ones strictly increase the filtration. 

Let's see what the resulting spectral sequence comparison argument yields (as noted before, we are dealing with finite filtrations, hence with the comparison theorem in its most classical form \cite[Theorem 5.2.12]{weibel}). On the $E^0$ page we have the associated graded spaces, and the map between them. Concretely, these are:
\begin{equation} \label{eq:e0-map1}
\mathit{CF}^*(\phi) \otimes \mathit{CF}^*(\phi) \otimes \bK((h)) \longrightarrow
\mathit{CF}^*(\phi^2) \otimes \bK((h)),
\end{equation}
where: the differential on the left hand side is the group cohomology differential for the $\bZ/2$-action exchanging the two factors; the differential on the right hand is the same kind of differential for the naive $\bZ/2$-action on $\mathit{CF}^*(\phi^2)$; and finally, the map \eqref{eq:e0-map1} (assuming Proposition \ref{th:key}) takes
\begin{equation} \label{eq:e0-map}
x \otimes x \longmapsto h^{n-\kappa(D\phi_x)} x,
\end{equation}
and kills the other generators. On the $E^1$ page, we get a map
\begin{equation}
\hat{H}^*(\bZ/2; \mathit{CF}^*(\phi) \otimes \mathit{CF}^*(\phi)) \longrightarrow
\hat{H}^*(\bZ/2; \mathit{CF}^*(\phi^2)).
\end{equation}
As discussed in \eqref{eq:tate-isomorphism}, the left hand side has a basis over $\bK((h))$ represented by $x \otimes x$. As discussed in Addendum \ref{th:low-energy-differential}, the right hand side has a basis represented by $x$, where $x$ is again a fixed point of $\phi$. In particular, it is clear that the two sides are abstractly isomorphic; but what's essential for us is a slightly stronger form of that statement, namely that the map induced by \eqref{eq:e0-map} is an isomorphism. Applying the spectral sequence comparison theorem therefore shows that tensoring $\wp$ with $\bK((h))$ turns it into a quasi-isomorphism. Since tensoring with $\bK((h))$ commutes with passing to cohomology, this is equivalent to the statement of Theorem \ref{th:main}.

\begin{remark} \label{th:alternative-proof}
There is a possible alternative strategy of proof, which would go by constructing a map in inverse direction to \eqref{eq:equi-pants}, such that the two become inverses after tensoring with $\bK((h))$. The putative inverse is not mysterious in itself: it is just a coproduct, constructed dually to \eqref{eq:equi-pants}. The key expectation is that the composition of product and coproduct (in either order) is an ``equivariant quantum cap product'' with the class $\delta \in H^n_{\bZ/2}(M \times M)$ which is Poincar{\'e} dual to the diagonal $\Delta \subset M \times M$. Figures \ref{fig:glue-1} and \ref{fig:glue-2} attempt to give a picture of the degenerations which underlie that expectation (note that both times, they are compatible with a suitable $\bZ/2$-action).

It is well-known that $\delta$ becomes invertible after tensoring with $\bK((h))$. In fact, in view of the localization theorem (Theorem \ref{th:localization}), it is enough to show that the restriction of $\delta$ to $\Delta$ has that property. But that restriction is the equivariant (mod $2$) Euler class of the normal bundle, which is $\sum_i h^{n-i} w_i(TM)$, hence invertible since $w_0(TM) = 1$. This would conclude the argument.

We have not pursued this alternative strategy, because it is less geometric and requires additional moduli spaces and gluing machinery. Nevertheless, there are two potentially attractive aspects to it. One is that it would quantify the failure of \eqref{eq:equi-pants} itself to be an isomorphism (because it depends only on the negative powers of $h$ which appear in $\delta^{-1}$). The second advantage is that a more abstract TQFT-like viewpoint may be better for generalizations beyond the exact case.
\end{remark}
\begin{figure}
\begin{picture}(0,0)%
\includegraphics{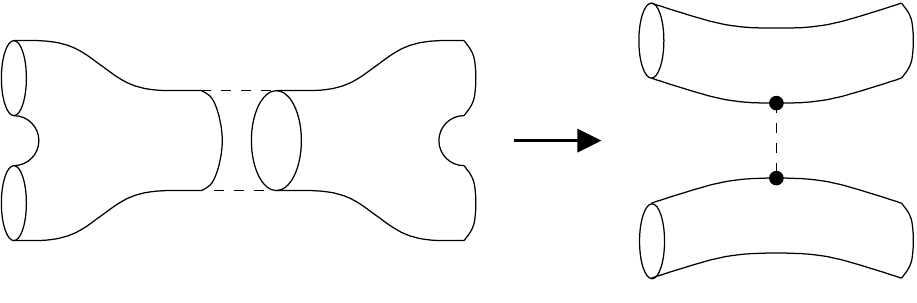}%
\end{picture}%
\setlength{\unitlength}{3158sp}%
\begingroup\makeatletter\ifx\SetFigFont\undefined%
\gdef\SetFigFont#1#2#3#4#5{%
  \reset@font\fontsize{#1}{#2pt}%
  \fontfamily{#3}\fontseries{#4}\fontshape{#5}%
  \selectfont}%
\fi\endgroup%
\begin{picture}(5492,1806)(1043,-1030)
\put(1351,-961){\makebox(0,0)[lb]{\smash{{\SetFigFont{10}{12.0}{\rmdefault}{\mddefault}{\updefault}{coproduct}%
}}}}
\put(2851,-961){\makebox(0,0)[lb]{\smash{{\SetFigFont{10}{12.0}{\rmdefault}{\mddefault}{\updefault}{product}%
}}}}
\end{picture}%
\caption{\label{fig:glue-1}}
\end{figure}%
\begin{figure}
\begin{picture}(0,0)%
\includegraphics{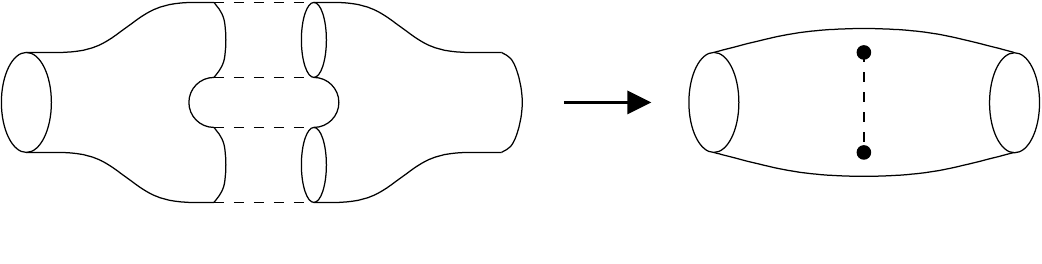}%
\end{picture}%
\setlength{\unitlength}{3158sp}%
\begingroup\makeatletter\ifx\SetFigFont\undefined%
\gdef\SetFigFont#1#2#3#4#5{%
  \reset@font\fontsize{#1}{#2pt}%
  \fontfamily{#3}\fontseries{#4}\fontshape{#5}%
  \selectfont}%
\fi\endgroup%
\begin{picture}(6245,1581)(743,-2680)
\put(2851,-2611){\makebox(0,0)[lb]{\smash{{\SetFigFont{10}{12.0}{\rmdefault}{\mddefault}{\updefault}{coproduct}%
}}}}
\put(1051,-2611){\makebox(0,0)[lb]{\smash{{\SetFigFont{10}{12.0}{\rmdefault}{\mddefault}{\updefault}{product}%
}}}}
\end{picture}%
\caption{\label{fig:glue-2}}
\end{figure}%

Having said that, we now begin with the actual proof of Proposition \ref{th:key}. There is one special case which is particularly simple, since it does not involve equivariance at all:

\begin{lemma} \label{th:cheese}
$c_{+1,n} = 1$.
\end{lemma}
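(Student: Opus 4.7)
My plan is to exploit the reduction lemmas \ref{th:independence-1}--\ref{th:independence-4} to move the problem to a model so simple that the computation reduces to the fact that the identity class squares to itself. Since $c_{+1,n}$ depends only on the invariant $(+1,n)$ under \eqref{eq:invariant}, it suffices to exhibit \emph{one} nondegenerate fixed point whose linearization realizes that invariant and read off its local contribution.

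The model I will use is $M = D^{2n}$ with its standard Liouville structure, and $\phi = \phi_t$ the Hamiltonian flow of $H = r_M = |z|^2$ for small $t>0$. Since $\iota_{X_H}\theta_M = H$, the flow preserves $\theta_M$ exactly, so $G_\phi = 0$ and \eqref{eq:exact-phi} holds trivially. The flow is a simultaneous rotation by angle $2t$ in each coordinate plane, so its unique fixed point is the origin $x = 0$, lying in the interior of $M$; its linearization $D\phi_{t,x}$ is a direct sum of $n$ rotation blocks of type (ii+) in the notation of \eqref{eq:blocks}, giving Krein index $\kappa(D\phi_x) = n$ and $\det(I - D\phi_x) > 0$, hence invariant $(+1,n)$. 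For small enough $t$ the same holds for $\phi^2 = \phi_{2t}$, and Setups \ref{th:setup-1}, \ref{th:setup-2}, \ref{th:setup-4} are all satisfied (the last one vacuously, since there is only one fixed point).

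Because $x$ is the unique fixed point of both $\phi_t$ and $\phi_t^2$, the complexes $\mathit{CF}^*(\phi_t)$ and $\mathit{CF}^*(\phi_t^2)$ are each one-dimensional over $\bK$, so every cochain coincides with its own cohomology class. By Example \ref{th:pss}, the PSS-type isomorphism $\mathit{HF}^*(\phi_t) \iso H^*(D^{2n}) = \bK$ intertwines the pair-of-pants product with the cup product, and $[x]$ corresponds to the unit (as the local minimum of $r_M$); hence $[x] \cdot [x] = [x]$, so the $x$-coefficient of the cochain $\wp^{0,+}(x,x)$ equals $1$. Lemma \ref{th:c-grading} implies that $c_x$ receives contributions only from the $h^0$ term (since $\kappa = n$), and \eqref{eq:0-} kills the $\sigma = -$ contribution at $i = 0$. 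Thus $c_x = \#\scrM^{0,+}_{\mathit{prod}}(x,x,x) = 1$, which via \eqref{eq:universal-constants} gives $c_{+1,n} = 1$.

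The only non-elementary ingredient the plan leans on is the identification of the pair-of-pants product with the cup product in the Hamiltonian setting of Example \ref{th:pss}; this is a classical PSS-type statement that the paper cites rather than proves. If one wishes to avoid invoking it, an alternative is a direct Morse-theoretic calculation using the perturbed graph-flow model \eqref{eq:y-graph-equation} at a local minimum, where a standard transversality argument shows that the relevant count is exactly one. Either way, the essential structural feature is that by confining attention to a one-dimensional Floer complex, no information is lost between the chain and cohomology levels, and the local contribution is exposed directly.
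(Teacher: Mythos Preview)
Your argument is correct, but the paper takes a more direct route that avoids both the reduction Lemmas \ref{th:independence-1}--\ref{th:independence-4} and any appeal to PSS. Since $\kappa(D\phi_x) = n$ forces $c_x$ to live in degree $h^0$, only the ordinary (non-equivariant) pair-of-pants moduli space $\scrM^{0,+}_{\mathit{prod}}(x,x,x)$ matters. The paper observes that if one temporarily sets the inhomogeneous term to zero, every solution with limits $(x,x,x)$ has zero energy and is therefore the constant map at $x$; the linearized operator at that constant is exactly the operator $D_A$ of \eqref{eq:d-bar-a} with $A = D\phi_x$, which has index zero by Lemma \ref{th:index-theory} and is injective by Lemma \ref{th:no-kernel}, hence invertible. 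So the constant map is a regular isolated point of the unperturbed moduli space, and this persists under a small inhomogeneous perturbation, giving the count $1$ directly---for \emph{any} fixed point with invariant $(+1,n)$, without passing to a special model.

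The payoff of the paper's approach is that it is entirely internal: the linear analysis of Section \ref{sec:linear} was developed precisely to handle this, and no global model or external PSS statement is needed. Your approach trades that self-containedness for a more conceptual picture (the unit squares to itself), at the cost of invoking Example \ref{th:pss}, which the paper states but does not prove. Both are valid; the paper's argument is shorter and uses exactly the tools it has built.
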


\begin{proof}
In this case, we are looking at the $h^0$ coefficient of $\wp(x,x,x)$, which means the ordinary pair-of-pants product. Suppose temporarily that the inhomogeneous term is taken to be zero. In that case, $\scrM^{0,+}_{\mathit{prod}}(x,x,x) = \bar{\scrM}^{0,+}_{\mathit{prod}}(x,x,x)$ consists of a single point, the constant map $S \rightarrow M$ at $x$. The linearization of the pseudo-holomorphic curve equation at that point is one of the operators \eqref{eq:d-bar-a}. This has index zero by Lemma \ref{th:index-theory}, and is injective by Lemma \ref{th:no-kernel}, hence a regular point. Hence, for any small perturbation of this setup (introduced by choosing an inhomogeneous term), it will still be true that $\scrM^{0,+}_{\mathit{prod}}(x,x,x)$ consists of a single regular point.
\end{proof} 

In principle, it should be possible to determine each $c_{s,k}$ by itself, let's say by starting with the degenerate case in which the inhomogeneous term is zero, and applying a suitable obstruction theory. However, it is clear that these numbers for different $(s,k)$ are not really independent: the fact that $\wp$ is a chain map implies relations between them. We will use those relations to derive the rest of Proposition \ref{th:key} from Lemma \ref{th:cheese}. %(yet another possible approach would be to establish a relation between our equivariant product and the standard one, in the situation of Example \ref{th:?}, and then use that relation).

\subsection{Two Morse-theoretic examples}
The following considerations are local, which means that they should be thought of as taking place in a Darboux chart inside some Liouville domain. We consider only the part of Floer theory that takes place inside that chart. This is a ``local Floer cohomology'' argument, which makes sense because the energies involved can be made arbitrary small. In particular, because of the local nature of the argument, we can assume that Floer cohomology and its product structure are $\bZ$-graded (as in Lemma \ref{th:c-grading}). Of course, local Floer cohomology is convenient, but not really essential here: one could specify exactly what the ambient Liouville domain should be, and how our symplectic automorphism behaves away from the local chart (and then show that this is irrelevant for the actual computation).

\begin{remark}
In fact, in the two examples below, we consider situations which can be obtained by perturbing a single degenerate fixed point, which is local Floer cohomology in the most commonly used sense (see e.g.\ \cite[Section 3]{ginzburg-gurel10}). This relies on Gromov compactness arguments similar to those in Lemma \ref{th:independence-2}. 
%Strictly speaking, these can be applied only to a finite number of terms (in $h$) of equivariant Floer differential and equivariant pair-of-pants product, but that is entirely sufficient for our purpose. 
%
Subsequently (Section \ref{subsec:final-model}), we will consider an example of a slightly more complicated nature. To prove that local Floer cohomology can be defined in that context, one combines the Gromov compactness arguments with a priori bounds (such as \cite[Lemma 4.3.1]{mcduff-salamon}, but with varying almost complex structure).
\end{remark}

Let $H$ be a Morse function with exactly two critical points (in our local chart) $x,y$, of index \begin{equation} \label{eq:morse-index-values}
i(x) = i-1, \;\; i(y) = i,
\end{equation}
for some $1 \leq i \leq 2n$. We suppose that these two annihilate each other under a (local) deformation of the Morse function, which implies that the Morse differential (or rather, its local part) sends $x$ to $y$. Obviously, in this situation 
\begin{equation} \label{eq:morse-function-values}
H(x) < H(y).
\end{equation}

\begin{remark}
Since this language recurs later on, it may be worth spelling out what we mean by it. We start with a function $H_0$ which has a degenerate critical point of class $(A_2)$, and form $H = H_c$ by a perturbation depending on a small parameter $c>0$, which yields a pair of nondegenerate critical points (by the generic birth-death process in one-parameter families of Morse functions \cite{cerf}). A local picture of such a perturbation is
\begin{align}
& H_0(\xi_1,\cdots,\xi_n) = \xi_1^3/3 - \xi_2^2 - \cdots - \xi_i^2 + \xi_{i+1}^2 + \cdots + \xi_n^2, \\
& H_c(\xi_1,\dots,\xi_n) = H_0(\xi_1,\dots,\xi_n) - c \xi_1.
\end{align}
In such local coordinates, $x = (c^{1/2},0,\dots,0)$ and $y = (-c^{1/2},0,\dots,0)$, and then \eqref{eq:morse-index-values} and \eqref{eq:morse-function-values} are obvious. If the metric is standard in our local coordinates, one can explicitly write down the Morse trajectory connecting $x$ to $y$. For a general metric, the simplest argument may be an indirect one: the Morse homology of $H_c$ and $H_{-c}$ are the same, and the same is true for the local contributions to it (near the degenerate critical point). However, for $H_{-c}$ this local contribution is zero since the critical points have disappeared. The advantage of this indirect argument is that it also applies to Floer theory (without requiring a reduction to Morse theory). Of course, other approaches are also possible: for instance, a direct study of the behaviour of Floer complexes under birth-death of generators, as in \cite{lee}.
\end{remark}

Let $(\phi_t)$ be the Hamiltonian flow of $H$. We consider $\phi = \phi_t$ for small $t>0$, and its square $\phi^2 = \phi_{2t}$. Both $\phi$ and $\phi^2$ have only $x$ and $y$ as fixed points (in our local chart), and
\begin{equation} \label{eq:action-values}
A_\phi(x) = t H(x), \;\; A_\phi(y) = t H(y), \;\; A_{\phi^2}(x) = 2 t H(x), \;\; A_{\phi^2}(y) = 2 t H(y).
\end{equation}
The associated Floer cochain complexes (or rather, their local parts; we will now stop putting in that proviso) are
\begin{align}
& \mathit{CF}^*(\phi) = \mathit{CF}^*(\phi^2) = \bK x \oplus \bK y, \quad |x| = i-1, \; |y| = i, \\
& d_{J_\phi}(x) = d_{J_{\phi^2}}(x) = y. \label{eq:morse-floer}
\end{align}
To determine \eqref{eq:morse-floer}, one can use the general relation between Morse complex and Floer complex, which holds for a specific class of almost complex structures \cite{hofer-salamon95}; or alternatively, appeal to the isotopy invariance of Floer cohomology, and the fact that the two fixed points are known to kill each other under such an isotopy. From Addendum \ref{th:low-energy-differential}, one sees that the equivariant Floer differential strictly increases the action. By combining this with the $\bZ$-grading (which exists for the same reason as in Lemma \ref{th:c-grading}), one sees that there are no higher order contributions in $h$:
\begin{equation} \label{eq:d-on-the-right}
d_{\mathit{eq}} = d_{J_{\phi^2}}. 
\end{equation}
The differential on $C^*(\bZ/2; \mathit{CF}^*(\phi) \otimes \mathit{CF}^*(\phi))$ is
\begin{equation} \label{eq:d-on-the-left}
\left\{
\begin{aligned}
& x \otimes x \longmapsto y \otimes x + x \otimes y, \\
& x \otimes y, \, y \otimes x \longmapsto y \otimes y + h(x \otimes y + y \otimes x), \\
& y \otimes y \longmapsto 0.
\end{aligned}
\right.
\end{equation}

One can arrange that the equivariant pair-of-pants product \eqref{eq:chain-product} does not decrease the action (Addendum \ref{th:careful-action}). With this and the $\bZ$-grading in mind, it is necessarily of the form
\begin{equation} \label{eq:wp-coefficients}
\left\{
\begin{aligned}
& \wp(x \otimes x) = c_{(-1)^{i-1}, n-i+1} h^{i-1} x + b_{xx} h^{i-2} y, \\
& \wp(x \otimes y) = b_{xy} h^{i-1} y, \\
& \wp(y \otimes x) = b_{yx} h^{i-1} y, \\
& \wp(y \otimes y) = c_{(-1)^i, n-i} h^i y,
\end{aligned}
\right.
\end{equation}
where the $c$'s are local contributions (the relevant Krein indices are computed in Example \ref{th:epsilon-example} or Lemma \ref{th:krein-conley-zehnder}), and the $b$'s a priori unknown coefficients in $\bK$. The fact that $\wp$ is a chain map yields 
\begin{equation}
c_{(-1)^{i-1}, n-i+1} = b_{xy} + b_{yx} = c_{(-1)^i,n-i}.
\end{equation}
For later reference, we summarize the outcome with slightly different notation:

\begin{lemma} \label{th:constant-1}
For any $-n \leq k \leq n-1$ and $s = (-1)^{k+n}$, we have $c_{s,k} = c_{-s,k+1}$. 
\end{lemma}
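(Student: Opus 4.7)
The computation is already nearly packaged for us in the text immediately preceding the lemma; I would simply pin down that the two chain-map identities it yields, together with the identification of gradings, give exactly the asserted equality of universal constants.

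The plan is to exploit the fact that the equivariant pair-of-pants product $\wp$ is a chain map from $C^*(\bZ/2;\mathit{CF}^*(\phi)\otimes \mathit{CF}^*(\phi))$ with the differential \eqref{eq:d-on-the-left} to $\mathit{CF}^*_{\mathit{eq}}(\phi^2)$ with the differential \eqref{eq:d-on-the-right}. Writing $\wp$ in the a priori form \eqref{eq:wp-coefficients} and applying the chain-map identity to the cochain $x\otimes x$ gives
\begin{equation*}
c_{(-1)^{i-1},\,n-i+1}\, h^{i-1} y \;=\; d_{J_{\phi^2}}\wp(x\otimes x) \;=\; \wp(y\otimes x + x\otimes y) \;=\; (b_{xy}+b_{yx})\, h^{i-1}y,
\end{equation*}
since $d_{J_{\phi^2}}(x)=y$ and $d_{J_{\phi^2}}(y)=0$. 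Applying it to $x\otimes y$ (or symmetrically to $y\otimes x$), and using that $\wp(x\otimes y)$ has the form $b_{xy}h^{i-1}y$ which is a $d_{J_{\phi^2}}$-cocycle, gives
\begin{equation*}
0 \;=\; d_{\mathit{eq}}\wp(x\otimes y) \;=\; \wp\bigl(y\otimes y + h(x\otimes y + y\otimes x)\bigr) \;=\; \bigl(c_{(-1)^i,\,n-i} + b_{xy}+b_{yx}\bigr)\, h^i y
\end{equation*}
in characteristic two. Combining these two identities yields $c_{(-1)^{i-1},\,n-i+1} = b_{xy}+b_{yx} = c_{(-1)^i,\,n-i}$.

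The remaining step is purely bookkeeping: substituting $k=n-i$ (so that $i=n-k$ runs over $\{1,\dots,2n\}$ precisely as $k$ runs over $\{-n,\dots,n-1\}$), one has $(-1)^i = (-1)^{n-k} = (-1)^{k+n} = s$ and $(-1)^{i-1} = -s$, while $n-i=k$ and $n-i+1 = k+1$. The equality therefore reads $c_{-s,k+1} = c_{s,k}$, which is the assertion of the lemma.

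The only non-routine input here is the shape of the ansatz \eqref{eq:wp-coefficients}, which in turn relies on three ingredients already established: the action-filtration compatibility of $\wp$ (Addendum \ref{th:careful-action}, used so that no term with a strictly smaller action can appear); the local $\bZ$-grading together with Lemmas \ref{th:c-grading} and \ref{th:krein-conley-zehnder} (which fix the $h$-powers and pin down the diagonal coefficients as Krein-indexed local contributions via Example \ref{th:epsilon-example}); and the simplified form \eqref{eq:d-on-the-right} of the equivariant differential, which holds because the action-strict-increase property plus the grading leaves no room for higher-order $h$-corrections. Granting these, no obstacle remains: the argument is essentially an exercise in matching coefficients in $\bF_2$, and the off-diagonal coefficients $b_{xy}$, $b_{yx}$, $b_{xx}$ are eliminated from the final answer.
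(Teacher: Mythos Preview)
Your proof is correct and follows essentially the same approach as the paper: you spell out explicitly the two chain-map identities (applied to $x\otimes x$ and to $x\otimes y$) that the paper summarizes in the single displayed line $c_{(-1)^{i-1}, n-i+1} = b_{xy} + b_{yx} = c_{(-1)^i,n-i}$, and your index bookkeeping matches. The inputs you identify (action filtration, local $\bZ$-grading with Krein index computation via Example~\ref{th:epsilon-example}, and the collapse \eqref{eq:d-on-the-right} of the equivariant differential) are exactly those the paper uses.
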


Let's consider a twisted version of the previous situation. Namely, suppose that our Hamiltonian has the form $H(p,q) = p_1q_1 + (\text{\it function in the other $2n-2$ variables})$, We want it to have critical points $x,y$ as before, which now obviously must lie in $\{p_1 = q_1 = 0\}$, and have Morse index \eqref{eq:morse-index-values} with $2 \leq i \leq 2n-1$. Take $\phi = \rho \phi_t$, where $\phi_t$ is the Hamiltonian flow, and $\rho$ is the involution which reverses $(p_1,q_1)$. The fixed points of $\phi$ are still just $x$ and $y$, and the same is true for $\phi^2 = \phi_{2t}$. The action values are as in \eqref{eq:action-values}, since they can be computed entirely inside the locus $\{p_1 = q_1 = 0\}$. However, the degrees of the generators now come out slightly differently: if we connect the identity to $\rho$ by a $\pi$ rotation in the $(p_1,q_1)$-plane, and use that to equip $\phi$ with the structure of a graded symplectic isomorphism, then
\begin{align}
& \mathit{CF}^*(\phi) = \bK x \oplus \bK y, \quad |x| = i-2, |y| = i-1, \\
& \mathit{CF}^*(\phi^2) = \bK x \oplus \bK y, \quad |x| = i-3, \; |y| = i-2.
\end{align}
The differentials on these groups are as before. The same applies to the equivariant differential \eqref{eq:d-on-the-right} and to \eqref{eq:d-on-the-left}. The same computation as before, together with Example \ref{th:rotated-epsilon-example}, shows the following:

\begin{lemma} \label{th:constant-2}
For any $1-n \leq k \leq n-2$ and $s = (-1)^{k+n+1}$, we have $c_{s,k} = c_{-s,k+1}$. 
\end{lemma}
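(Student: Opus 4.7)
The plan is to mirror the structure of the argument that established Lemma \ref{th:constant-1}, working in the twisted local model where $\phi = \rho \phi_t$ with $\rho$ reversing $(p_1,q_1)$. The fixed points of $\phi$ and $\phi^2 = \phi_{2t}$ are again the two Morse critical points $x,y$, with actions as in \eqref{eq:action-values}; only the gradings differ because the path used to grade $\phi$ now passes through $\rho$. By Example \ref{th:lifted-rotated-epsilon-example} (applied with $Q = Q_x$ and $Q = Q_y$), the Conley--Zehnder indices are $i-2$ and $i-1$ for $x,y$ as generators of $\mathit{CF}^*(\phi)$, and by Lemma \ref{th:krein-conley-zehnder} they become $i-3$ and $i-2$ for $\mathit{CF}^*(\phi^2)$. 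As in the untwisted case, the cancellation argument (via Hofer--Salamon, or by isotoping $H$ to $-H$ so both generators disappear) determines the Floer differentials to be $x \mapsto y$ on both complexes, and by Addendum \ref{th:low-energy-differential} combined with the local $\bZ$-grading there are no higher-order terms in $h$, so $d_{\mathit{eq}} = d_{J_{\phi^2}}$ on the nose. The group-cohomology differential on $C^*(\bZ/2; \mathit{CF}^*(\phi)^{\otimes 2})$ is still given by \eqref{eq:d-on-the-left}.

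Next I would write the most general degree-preserving and action-non-decreasing expression for $\wp$. Using Lemma \ref{th:c-grading}, the coefficient of $h^{n-\kappa(D\phi_x)} x$ in $\wp(x \otimes x)$ is $c_x$, and likewise for $y$. The relevant Krein indices are $\kappa(D\phi_x) = n-i+1$ and $\kappa(D\phi_y) = n-i$ (again by Example \ref{th:rotated-epsilon-example}), while the $\bZ/2$-degrees are $(-1)^{i}$ and $(-1)^{i-1}$ respectively by \eqref{eq:mod2-degree} applied to the Conley--Zehnder indices above. This yields a formula entirely parallel to \eqref{eq:wp-coefficients}, but with these shifted grading labels (the powers of $h$ and the $y$-versus-$x$ targets land in exactly the same slots as before, because shifting gradings uniformly has no effect on the structural form of $\wp$).

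Imposing that $\wp$ is a chain map then gives, exactly as in the derivation of Lemma \ref{th:constant-1}, the equality of local constants attached to the two generators:
\begin{equation}
c_{(-1)^{i}, n-i+1} = c_{(-1)^{i-1}, n-i}.
\end{equation}
Setting $k = n-i$, this reads $c_{-s,k+1} = c_{s,k}$ with $s = (-1)^{k+n+1}$; as $i$ ranges over $\{2,\dots,2n-1\}$, $k$ ranges over $\{1-n,\dots,n-2\}$, which is precisely the claim.

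I expect no serious obstacle: the only subtlety is tracking the grading shift induced by the half-rotation $\rho$ (ensuring the right signs and powers of $h$ in $\wp$), and verifying that the Hofer--Salamon / isotopy-invariance identification of the Floer differential still applies in the presence of the twist. Both are handled by local index theory (Section \ref{subsec:index}) and by the same birth-death deformation argument that justified \eqref{eq:morse-floer}, applied inside the $\rho$-invariant slice $\{p_1 = q_1 = 0\}$ where the action values and connecting trajectories are computed.
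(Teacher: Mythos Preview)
Your proposal is correct and follows essentially the same approach as the paper, which simply states that the computation is identical to that of Lemma \ref{th:constant-1} once one uses Example \ref{th:rotated-epsilon-example} for the Krein indices. You have in fact spelled out more of the grading bookkeeping than the paper does; your substitution $k = n-i$ and the resulting range $1-n \leq k \leq n-2$ match exactly.
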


Together, Lemmas \ref{th:constant-1} and \ref{th:constant-2} show that within the allowed set of values \eqref{eq:conditions-kappa}, $c_{s,k}$ remains the same if we change $k$ by $\pm 1$ and simultaneously reverse $s$.
%
% Krein index is n-Morse index. For y, this ranges from n-(2n-1) = (1-n) to n-2.
% s is the sign of y for \phi, which is i-1. k is the Krein index of y, which is n-i

\subsection{An example with nontrivial periodic points\label{subsec:final-model}}
We consider another local model, this time starting in two dimensions, for the sake of concreteness. Take a disc $U$, divided into an inner disc $U_{\mathit{in}}$, a middle annulus $U_{\mathit{mid}}$ surrounding it, and another outer annulus $U_{\mathit{out}}$ around that; see Figure \ref{fig:function1}. Consider Morse functions $H_{\mathit{in}}$, $H_{\mathit{out}}$ defined in the respective regions; Figure \ref{fig:function1} shows their level sets as well as the direction in which the associated Hamiltonian vector fields go. Importantly for our purpose, $U_{\mathit{in}}$ should admit an involution (rotation by $\pi$ around $x$ in Figure \ref{fig:function1}) which leaves $H_{\mathit{in}}$ unchanged. Define a symplectic automorphism $\phi$ as follows: on $U_{\mathit{out}}$, it is the flow of $H_{\mathit{out}}$ for small positive time; on $U_{\mathit{in}}$, it is the flow of $H_{\mathit{in}}$ for small positive time, composed with rotation by $\pi$; and in $U_{\mathit{mid}}$, we interpolate between the two, by a right-handed half Dehn twist (this means that, as one enters $U_{\mathit{mid}}$ from the outside, $\phi$ starts moving around the annulus by increasing amounts in anticlockwise direction).

\begin{figure}
\begin{picture}(0,0)%
\includegraphics{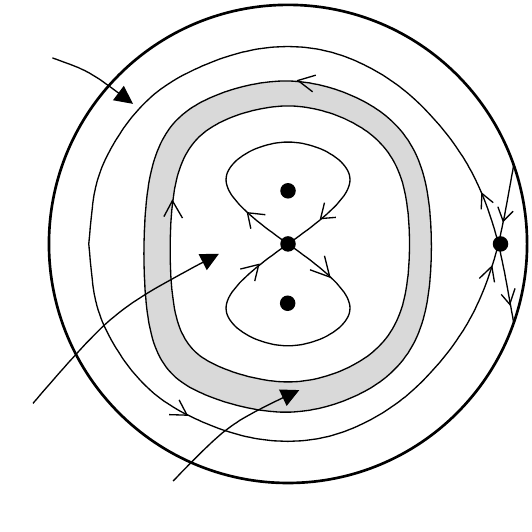}%
\end{picture}%
\setlength{\unitlength}{3355sp}%
\begingroup\makeatletter\ifx\SetFigFont\undefined%
\gdef\SetFigFont#1#2#3#4#5{%
  \reset@font\fontsize{#1}{#2pt}%
  \fontfamily{#3}\fontseries{#4}\fontshape{#5}%
  \selectfont}%
\fi\endgroup%
\begin{picture}(2992,2986)(924,-2433)
\put(2701,-863){\makebox(0,0)[lb]{\smash{{\SetFigFont{10}{12.0}{\rmdefault}{\mddefault}{\updefault}{$x$}%
}}}}
\put(1009,-1906){\makebox(0,0)[lb]{\smash{{\SetFigFont{10}{12.0}{\rmdefault}{\mddefault}{\updefault}{$U_{\mathit{in}}$}%
}}}}
\put(1786,-2360){\makebox(0,0)[lb]{\smash{{\SetFigFont{10}{12.0}{\rmdefault}{\mddefault}{\updefault}{$U_{\mathit{mid}}$}%
}}}}
\put(939,347){\makebox(0,0)[lb]{\smash{{\SetFigFont{10}{12.0}{\rmdefault}{\mddefault}{\updefault}{$U_{\mathit{out}}$}%
}}}}
\put(2626,-1186){\makebox(0,0)[lb]{\smash{{\SetFigFont{10}{12.0}{\rmdefault}{\mddefault}{\updefault}{$z_1$}%
}}}}
\put(3526,-886){\makebox(0,0)[lb]{\smash{{\SetFigFont{10}{12.0}{\rmdefault}{\mddefault}{\updefault}{$y$}%
}}}}
\put(2551,-417){\makebox(0,0)[lb]{\smash{{\SetFigFont{10}{12.0}{\rmdefault}{\mddefault}{\updefault}{$z_0$}%
}}}}
\end{picture}%
\caption{\label{fig:function1}}
\end{figure}%
We can lift $\phi$ to a graded symplectic automorphism, and such a lift is uniquely specified by the following requirement: in a neighbourhood of $\partial U$, the grading agrees with what one would get from deforming the trivial grading of the identity map (bearing in mind that $\phi$ is a small deformation of the identity near $\partial U$). Then, the generators of $\mathit{CF}^*(\phi)$ corresponding to the two fixed points $x$ and $y$ satisfy
\begin{equation}
|x| = 0, \quad |y| = 1.
\end{equation}
One can deform $\phi$ to remove all fixed points, without changing the behaviour near $\partial U$. Hence, the Floer complex must be acyclic (alternatively, one one can deform $\phi$ to be close to the identity without changing the fixed points, and then argue by comparison with Morse theory). Hence,
$d_{J_\phi}(x) = y$ and $A_{\phi}(x) < A_{\phi}(y)$.

The square $\phi^2$ admits the following simpler description: on $U_{\mathit{in}}$ and $U_{\mathit{out}}$, it is the flow of the respective functions for small positive times; and in $U_{\mathit{mid}}$, we interpolate between them by a right-handed Dehn twist. In particular, the grading inside $U_{\mathit{in}}$ is close to that of the upwards shift by $2$, hence the degrees of the relevant generators of $\mathit{CF}^*(\phi^2)$ are lower by $2$ than the Morse indices. Concretely, there are four fixed points $x, \, z_0, \, z_1,\, y$ with
\begin{equation}
|x| = -1, \quad |z_0| = |z_1| = 0, \quad |y| = 1,
\end{equation}
and they satisfy
\begin{equation}
d_{J_{\phi^2}}(x) = z_0 + z_1, \quad d_{J_{\phi^2}}(z_0) = d_{J_{\phi^2}}(z_1) = y.
\end{equation}
The computation of the differential uses two arguments: as before, the Floer cohomology must be zero; and for the generators coming from $U_{\mathit{in}}$, one can appeal to a comparison with Morse theory. Note that in particular, 
\begin{equation} \label{eq:action-ordering}
A_{\phi^2}(x) < A_{\phi^2}(z_0) = A_{\phi^2}(z_1) < A_{\phi^2}(y). 
\end{equation}
A degree and action argument then shows that the only nontrivial additional contribution to the equivariant differential is
\begin{equation}
d_{\mathit{eq}}^1(z_0) = d_{\mathit{eq}}^1(z_1) = u(z_0 + z_1).
\end{equation}

In parallel with \eqref{eq:wp-coefficients}, one can write
\begin{equation} \label{eq:wo-coefficients-2}
\left\{
\begin{aligned}
& \wp(x \otimes x) = c_{+1,0}\, hx + b_{xx0} \, z_0 + b_{xx1} \, z_1, \\
& \wp(x \otimes y) = b_{xy} \, y , \\
& \wp(y \otimes x) = b_{yx} \, y,  \\
& \wp(y \otimes y) = c_{-1,0}\, hy.
\end{aligned}
\right.
\end{equation}
The Krein indices can be computed from Examples \ref{th:rotated-epsilon-example} and \ref{th:epsilon-example} (or alternatively from Lemma \ref{th:krein-conley-zehnder}). The absence of $hz_k$ terms in $\wp(x \otimes y)$ and $\wp(y \otimes x)$ is established by an action argument, which refines \eqref{eq:action-ordering}: by a suitable choice of details, one can make sure that $A_{\phi^2}(z_k)$ is much closer to $A_{\phi^2}(x)$ than to $A_{\phi^2}(y)$, in which case $A_{\phi}(x) + A_{\phi}(y) = A_{\phi^2}(x) + \half(A_{\phi^2}(y) - A_{\phi^2}(x)) > A_{\phi^2}(z_k)$. Then, the fact that $\wp$ is a chain map yields the relations
\begin{equation}
c_{+1,0} = b_{xx0} + b_{xx1} = b_{xy} + b_{yx} = c_{-1,0}.
\end{equation}
Even though we have considered a two-dimensional situation only, the same applies in $2n$ dimensions as well, by taking the product with a Hamiltonian flow in the remaining $2n-2$ variables, whose underlying function has a unique critical point. By choosing that critical point to have all possible Morse indices, one gets:

\begin{lemma} \label{th:constant-3}
For $1-n \leq k \leq n-1$, we have $c_{+1,k} = c_{-1,k}$.
\end{lemma}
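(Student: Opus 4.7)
The plan is to derive the identity from the chain-map equation $d_{\mathit{eq}}\circ\wp = \wp\circ d_C$ applied to the generators of $C^*(\bZ/2;\mathit{CF}^*(\phi)\otimes\mathit{CF}^*(\phi))$ in the explicit setup that has just been constructed, and then stabilize by multiplying with transverse Morse data to reach arbitrary $k$.

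First I would verify that $d_{J_\phi}$, $d_{J_{\phi^2}}$ and the equivariant correction $d^1_{\mathit{eq}}$ really have the stated form. On $U_{\mathit{in}}$ and $U_{\mathit{out}}$ the map $\phi^2$ agrees with a small-time Hamiltonian flow, so a Floer--Morse comparison pins down those contributions; acyclicity (produced by deforming $\phi$ rel boundary to a fixed-point-free map) and the action ordering \eqref{eq:action-ordering} then determine the full Floer differentials. The half Dehn twist in $U_{\mathit{mid}}$ contributes no fixed points but is responsible for realizing the naive involution $\rho$ as the transposition $z_0\leftrightarrow z_1$ fixing $x$ and $y$; this, together with the local $\bZ$-grading, forces $d^1_{\mathit{eq}}(z_0)=d^1_{\mathit{eq}}(z_1)=z_0+z_1$ as the only possible nontrivial correction and rules out all higher-order equivariant terms.

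Next, I would justify the ansatz \eqref{eq:wo-coefficients-2} for $\wp$: Lemma \ref{th:c-grading} fixes the $h$-exponents, and the action filtration together with the grading eliminates all other candidates except for $b_{xx0},b_{xx1},b_{xy},b_{yx}$. The delicate step is excluding $hz_k$ terms from $\wp(x\otimes y)$ and $\wp(y\otimes x)$: by tuning $H_{\mathit{in}}$ and $H_{\mathit{out}}$ so that $A_{\phi^2}(z_k)$ lies much closer to $A_{\phi^2}(x)$ than to $A_{\phi^2}(y)$, one obtains $A_\phi(x)+A_\phi(y)>A_{\phi^2}(z_k)$, which by Addendum \ref{th:careful-action} kills such a term. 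With the ansatz in place, expanding the chain-map relation on $x\otimes x$ gives $(b_{xx0}+b_{xx1})y+h(z_0+z_1)(c_{+1,0}+b_{xx0}+b_{xx1}) = (b_{xy}+b_{yx})y$, yielding $c_{+1,0}=b_{xx0}+b_{xx1}$ and $b_{xx0}+b_{xx1}=b_{xy}+b_{yx}$. The relation on $x\otimes y$ similarly produces $(c_{-1,0}+b_{xy}+b_{yx})hy=0$, giving $c_{-1,0}=b_{xy}+b_{yx}$. Combining the three equalities yields $c_{+1,0}=c_{-1,0}$ when $n=1$.

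Finally, to move from $k=0$ to arbitrary $k$, I would take the product of the two-dimensional model with a small-time Hamiltonian flow on a $(2n-2)$-dimensional Darboux chart, generated by a Morse function having a single critical point of Morse index $i_0\in\{0,\dots,2n-2\}$. By Example \ref{th:epsilon-example} this stabilization shifts the Krein index of both fixed points of the product automorphism by the same amount, so the identity for $(s,k)=(\pm 1,0)$ propagates; running through all admissible $i_0$ covers the full range $1-n\leq k\leq n-1$ allowed by \eqref{eq:conditions-kappa}. The main obstacle I anticipate is the action-filtration tuning that rules out the unwanted $hz_k$ terms, as this requires a careful and non-symmetric choice of $H_{\mathit{in}},H_{\mathit{out}}$ rather than a generic perturbation; once this is arranged the rest is algebraic bookkeeping built on the grading, action ordering and explicit description of $\rho$.
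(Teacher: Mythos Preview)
Your proposal is correct and follows essentially the same argument as the paper: set up the two-dimensional half-twist model, compute the Floer and equivariant differentials, write $\wp$ in the form \eqref{eq:wo-coefficients-2} using grading and action constraints, extract $c_{+1,0}=b_{xx0}+b_{xx1}=b_{xy}+b_{yx}=c_{-1,0}$ from the chain-map equation, and then stabilize by a transverse small-time Hamiltonian flow with a single critical point to reach all $k$ in the stated range. One small expository slip: it is the rotation by $\pi$ on $U_{\mathit{in}}$ (not the half Dehn twist in $U_{\mathit{mid}}$) that realizes $\rho$ as the transposition $z_0\leftrightarrow z_1$, since $\rho$ is the action of $\phi$ on fixed points of $\phi^2$; this does not affect your argument.
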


Clearly, Lemmas \ref{th:cheese}, \ref{th:constant-1}, \ref{th:constant-2} and \ref{th:constant-3} together imply Proposition \ref{th:key}.
\section{Beyond the exact case\label{sec:monotone}}

The exactness assumption has been used in the body of the paper in several different ways. This has technical advantages, since it rules out holomorphic sphere bubbles; but there are other situations where bubbling can be dealt with easily (the monotone case, for instance). There are much more important conceptual issues, which arise already at the point of defining equivariant Floer cohomology. These are roughly similar to, but not quite the same as, those encountered in \cite{jones} for classical equivariant homology, or in \cite{zhao14, albers-cieliebak-frauenfelder14} (see Section \ref{subsec:symplectic}) for $S^1$-equivariant symplectic cohomology. The aim of this section is to give a short and rather sketchy introduction to these questions, in the monotone case (note that the negatively monotone case seems much less interesting).

\subsection{Definition}
Take a closed symplectic manifold $M$ with $[\omega_M] = 2c_1(M)$ and $H^1(M) = 0$, and a symplectic automorphism $\phi$ with nondegenerate fixed points. Given a solution $u$ of \eqref{eq:floer} with limits $(y,x)$, both the energy $E(u)$ and the index of the linearized operator $D_u$ can depend on $u$, but their difference only depends on the limits. In fact, one can associate to each fixed point $x$ a normalized action $\bar{A}_\phi(x) \in \bR$, in such a way that for $u$ as before,
\begin{equation} \label{eq:monotone-energy}
E(u) - \mathrm{ind}(D_u) = \bar{A}_\phi(y) - \bar{A}_\phi(x).
\end{equation}
For those $u$ that contribute to the Floer differential $d_{\phi}$, $\mathrm{ind}(D_u) = 1$, which provides an a priori energy bound. Bubbling off of holomorphic spheres reduces the energy of the remaining part by at least $2$, hence is a codimension $2$ phenomenon (this is just a sketch of the classical construction of $\mathit{HF}^*(\phi)$, see \cite{floer88, dostoglou-salamon93}).
% differential goes from x to y and has index 1, then LHS >= -1

Let's pass to $\phi^2$, again assuming that its fixed points are nondegenerate. One can define $\mathit{HF}^*_{\mathit{eq}}(\phi^2)$ as in the exact case, as the cohomology of $\mathit{CF}^*(\phi^2)[[h]]$ with the equivariant differential. From the long exact sequence \eqref{eq:floer-u-sequence}, together with the fact that $\mathit{HF}^*_{\mathit{eq}}(\phi^2)$ is a finitely generated $\bK[[h]]$-module, one derives \eqref{eq:smith-2}. In particular, if $\mathit{HF}^*(\phi^2)$ vanishes, the same holds for $\mathit{HF}^*_{\mathit{eq}}(\phi^2)$.

For our next observation, we have to dig a bit deeper into the details. For those $[w,u] \in \scrM_{\mathit{eq}}^{i,\sigma}(y,x)$ which contribute to $d^{i,\sigma}_{\mathit{eq}}$, we have $\mathrm{ind}(D_u) = 1-i$. By \eqref{eq:monotone-energy}, $E(u)$ becomes negative if $i$ is large, hence
\begin{equation} \label{eq:polynomiality}
d_{\mathit{eq}}^{i,\sigma} = 0 \quad \text{for $i \gg 0$.}
\end{equation}
Therefore, the equivariant differential preserves the subspace $\mathit{CF}^*_{\mathit{poly}}(\phi^2) = \mathit{CF}^*(\phi^2)[h]$. We denote the resulting cohomology by $\mathit{HF}^*_{\mathit{poly}}(\phi^2)$. This polynomial version of equivariant cohomology is a finitely generated $\bZ/2$-graded $\bK[h]$-module. It is related to the previous one by 
\begin{equation} \label{eq:completion}
\mathit{HF}^*_{\mathit{eq}}(\phi^2) \iso \mathit{HF}^*_{\mathit{poly}}(\phi^2) \otimes_{\bK[h]} \bK[[h]].
\end{equation}

\subsection{Basic properties}
The polynomial version is much more delicate to handle, because the $h$-adic filtration on the underlying complex is no longer complete. Some basic properties can nevertheless be established easily. It fits into the usual kind of long exact sequence \eqref{eq:floer-u-sequence}, but the implications are weaker in this case. In particular, if $\mathit{HF}^*(\phi^2)$ vanishes, it only follows that $h$ must act invertibly on $\mathit{HF}^*_{\mathit{poly}}(\phi^2)$, which means that $0$ can't be an eigenvalue. 

\begin{lemma} \label{th:perturb-bound}
If $\phi$ is fixed point free, both $\mathit{HF}^*_{\mathit{poly}}(\phi^2)$ and $\mathit{HF}^*_{\mathit{eq}}(\phi^2)$ are finite-dimensional over $\bK$; in fact, their dimension is bounded above by the number of two-periodic orbits of $\phi$.
\end{lemma}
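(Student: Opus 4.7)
The plan is to run the action-filtration spectral sequence of Addendum~\ref{th:low-energy-differential} on the polynomial complex, exploiting the fact that the hypothesis forces the naive involution $\rho$ on $\mathit{CF}^*(\phi^2)$ to act freely. Write $N$ for the number of two-periodic orbits of $\phi$: since $\rho$ acts on fixed points of $\phi^2$ by $y\mapsto\phi(y)$, and $\phi$ has no fixed points, every generator of $\mathit{CF}^*(\phi^2)$ lies in a $\rho$-orbit of size two, and so $\mathit{CF}^*(\phi^2)$ is a free $\bK[\bZ/2]$-module of rank $N$.

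I introduce a filtration on $\mathit{CF}^*_{\mathit{poly}}(\phi^2)=\mathit{CF}^*(\phi^2)[h]$ for which the only contribution to $d_{\mathit{eq}}$ preserving filtration level is the zero-energy part $h(\mathit{id}+\rho)$, coming from the $i=1$ constant solutions of \eqref{eq:parametrized-equation}. In the exact case this is exactly the action filtration; in the monotone setting it is a combined filtration by normalized action $\bar A_{\phi^2}$ and $h$-adic weight, engineered so that the energy-index relation \eqref{eq:monotone-energy} and the polynomiality \eqref{eq:polynomiality} force every non-constant contribution to $d^{i,\sigma}_{\mathit{eq}}$ with $i\geq 1$ to strictly increase the filtration index (since $\bar A_{\phi^2}(y)-\bar A_{\phi^2}(x)=E+i-1\geq i-1$, with equality iff $E=0$). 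The filtration has finitely many levels on each $h$-adic truncation, so the associated spectral sequence converges to $\mathit{HF}^*_{\mathit{poly}}(\phi^2)$ and has
\[
E_1 \iso H^*\bigl(\mathit{CF}^*(\phi^2)[h],\, h(\mathit{id}+\rho)\bigr).
\]
Since $\rho$ acts freely on a basis, a direct calculation orbit by orbit (the polynomial analogue of Example~\ref{th:tate-0}) identifies this cohomology with $\bK^N$, concentrated in polynomial degree zero, with one surviving class $y+\rho y$ per orbit.

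Hence $\dim_{\bK}\mathit{HF}^*_{\mathit{poly}}(\phi^2)\leq \dim_{\bK}E_1 = N$. For the completed version, a finitely generated $\bK[h]$-module of finite $\bK$-dimension is necessarily $h$-torsion, and every such module is canonically isomorphic to its $h$-adic completion; combining this with \eqref{eq:completion} yields $\mathit{HF}^*_{\mathit{eq}}(\phi^2)\iso \mathit{HF}^*_{\mathit{poly}}(\phi^2)$ as $\bK$-vector spaces, and the bound $N$ transfers. The main obstacle is setting up the filtration rigorously in the monotone case, since, unlike in the exact setting, an ordinary Floer trajectory ($i=0$) can a priori lower $\bar A_{\phi^2}$ by up to one unit. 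The cleanest remedy is to work on the finite-dimensional $h$-adic truncations $\mathit{CF}^*(\phi^2)[h]/h^{n+1}$, on which the combined filtration genuinely has finite length; run the spectral sequence there to bound the dimensions uniformly in $n$, and then pass to the limit to conclude.
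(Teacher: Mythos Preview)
Your overall strategy and the target $E_1$ computation are correct: since $\rho$ acts freely on generators, $H^*\bigl(\mathit{CF}^*(\phi^2)[h],\,h(\mathit{id}+\rho)\bigr)\cong\bK^N$, and the deduction of the $\mathit{HF}^*_{\mathit{eq}}$ bound from the $\mathit{HF}^*_{\mathit{poly}}$ bound via \eqref{eq:completion} is fine. The gap is the filtration itself. You correctly identify that filtering by $\bar A_{\phi^2}$ alone fails because the ordinary Floer differential ($i=0$) can \emph{lower} $\bar A_{\phi^2}$ by up to one unit. But the remedy you propose---passing to $h$-adic truncations---does not touch this obstacle: the $i=0$ term carries no power of $h$, so it violates the $\bar A_{\phi^2}$-filtration on every truncation exactly as on the full complex. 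More fundamentally, there is no evident filtration for which $\delta$ is the associated graded differential: with respect to the natural normalized-action grading $\bar A_{\phi^2}(x)-j$, the term $\delta$ shifts by the \emph{largest} amount (exactly $-1$), while every other term of $d_{\mathit{eq}}$ shifts by $E-1>-1$, which can even be positive. So $\delta$ would appear on a late page of any such spectral sequence, not on $E_1$.

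The paper circumvents this by abandoning the spectral-sequence packaging in favour of the homological perturbation lemma. One writes down an explicit contracting homotopy $k$ for $\bigl(\mathit{CF}^*(\phi^2)[h],\delta\bigr)$ onto the $N$-dimensional space $D^*$ of orbit representatives, and observes that $k\circ(d_{\mathit{eq}}-\delta)$ strictly \emph{increases} the normalized action $\bar A_{\phi^2}(x)-j$. Since that quantity is bounded above on the polynomial complex, the composite is locally nilpotent, so the perturbation series converges and transfers $d_{\mathit{eq}}$ to a differential on $D^*$. This needs only the inequality ``$d_{\mathit{eq}}-\delta$ moves normalized action by strictly less than $\delta$ does'', with no monotonicity of either piece separately---precisely the flexibility your filtration approach lacks.
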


In the exact case, such bounds follow from the action filtration spectral sequence (Addendum \ref{th:low-energy-differential}). The argument below uses instead normalized actions, and an algebraic framework which is slightly more explicit than spectral sequences.

\begin{proof}
In $\mathit{CF}^*(\phi^2)[h]$, assign to a generator $x h^j$ the normalized action
\begin{equation} \label{eq:normalized-action-2}
\bar{A}_{\phi}(x h^j) = \bar{A}_\phi(x) - j.
\end{equation}
The maps that contribute to the $h^i$ term of $d_{\mathit{eq}}$ have $\mathrm{ind}(D_u) = 1-i$.  From \eqref{eq:monotone-energy}, one therefore sees that $d_{\mathit{eq}}$ decreases \eqref{eq:normalized-action-2} by at most $1$. Moreover, if one subtracts the zero energy part $\delta = h(\mathit{id} + \rho)$, then $d_{\mathit{eq}} - \delta$ decreases normalized actions by strictly less than $1$.

Divide the fixed points of $\phi^2$ into two subsets exchanged by $\phi$ (this is possible since $\phi$ itself is fixed point free). Elements of those two subsets will be denoted by $x_+$ and $\rho(x_+)$, respectively. Denote by $D^*$ the $\bZ/2$-graded $\bK$-vector space generated by the $x_+$. Consider the maps
\begin{align}
&
i: D^* \longrightarrow \mathit{CF}^*(\phi^2)[h], && i(x_+) = x_+ + \rho(x_+), 
\\
&
p: \mathit{CF}^*(\phi^2)[h] \longrightarrow D^*, &&
\left\{
\begin{aligned}
& p(x_+) = x_+, \\
& p(\rho(x_+)) = 0, \\
& p(x h^j) = 0 \text{ if $j>0$,}
\end{aligned}
\right.
\\
& k: \mathit{CF}^*(\phi^2)[h] \longrightarrow \mathit{CF}^{*-1}(\phi^2)[h],
&&
\left\{\begin{aligned}
& k(x_+) = 0, \\ 
& k(x_+ h^j) = h^{j-1} \rho(x_+) \text{ if $j>0$,} \\
& k(\rho(x_+) h^j) = 0 \text{ for all $j$.} \\
\end{aligned}
\right.
\\
\intertext{which satisfy}
& p \circ i = \mathit{id}, \\
& p \circ k = 0, \\
& k \circ i  = 0, \\
& k \circ k = 0, \\
& \delta \circ i = 0, \\
& p \circ \delta= 0, \\
& i \circ p = \mathit{id} + \delta \circ k + k \circ \delta. 
\end{align}

Since $k \circ (d_{\mathit{eq}} - \delta)$ strictly increases normalized actions, it must be a locally nilpotent endomorphism (which means that any element of $\mathit{CF}^*(\phi^2)[h]$ is annihilated by some power of it). With that in mind, one can define a differential on $D^*$ by the formula
\begin{multline}
d_D = p \circ \big((d_{\mathit{eq}} - \delta) + (d_{\mathit{eq}} - \delta) \circ k \circ (d_{\mathit{eq}} - \delta) \\ + (d_{\mathit{eq}} - \delta) \circ k \circ (d_{\mathit{eq}} - \delta) \circ k \circ (d_{\mathit{eq}} - \delta) + \cdots \big) \circ i.
\end{multline}
This is part of a standard ``transfer'' or ``perturbation'' formalism \cite{huebschmann-kadeishvili91, markl01}: similar formulae define chain maps between $D^*$ and $\mathit{CF}^*(\phi^2)$, which are chain homotopy equivalences \cite[Lemma 1.1]{huebschmann-kadeishvili91}. Hence, 
\begin{equation}
H^*(D^*,d_D) \iso \mathit{HF}^*_{\mathit{poly}}(\phi^2),
\end{equation}
which in view of the definition of $D^*$ implies the desired bound. The corresponding result for $\mathit{HF}^*_{\mathit{eq}}(\phi^2)$ then follows from \eqref{eq:completion}. 
\end{proof}

\subsection{A Lagrangian intersection analogue}
Given the previous remarks, it is an obvious question whether there are concrete examples in which $\mathit{HF}^*_{\mathit{poly}}(\phi^2)$ gives a better bound on two-periodic points than $\mathit{HF}^*_{\mathit{eq}}(\phi^2)$ (or ordinary Floer cohomology). We can't answer this, but we can show an instance of parallel behaviour for Lagrangian intersection Floer cohomology.

Namely, inside $M = \bC^2$, take $L_0 = \bR^2$ and $L_1 = S^1 \times S^1$ (the Clifford torus). We use the standard symplectic form, rescaled so that the unit disc has area $2$. This is chosen for compatibility with our previous monotonicity considerations. One can then associate to points $x \in L_0 \cap L_1$ normalized actions $\bar{A}_{L_0,L_1}(x)$, so that the analogue of \eqref{eq:monotone-energy} for pseudo-holomorphic strips holds. Specifically in our example, we have $L_0 \cap L_1 = \{(\pm 1, \pm 1)\}$; denote its four points by $x_{\pm,\pm}$. They all have the same normalized action. The differential on $\mathit{CF}^*(L_0,L_1)$ squares to zero because the disc-counting obstructions for both $L_0$ and $L_1$ \cite{oh-appendix} vanish mod $2$.  The Floer complex must be acyclic, because $L_1$ can be displaced from $L_0$ by a translation. Using the standard complex structure (which turns out to be regular), one determines it explicitly:
\begin{equation}
\begin{aligned}
& d_{L_0,L_1}(x_{--}) = d_{L_0,L_1}(x_{++}) = x_{-+} + x_{+-}, \\ 
& d_{L_0,L_1}(x_{-+}) = d_{L_0,L_1}(x_{+-}) = x_{--} + x_{++}.
\end{aligned}
\end{equation}

Now let $\bZ/2$ act on $M$ by $\iota(z_1,z_2) = (-z_1,-z_2)$. One can define an equivariant Floer differential $d_{\mathit{eq}}$ for the pair $(L_0,L_1)$ by a formalism parallel to that in Section \ref{subsec:equi-define}, see \cite{seidel-smith10}. In fact, the analogue of \eqref{eq:monotone-energy} shows that the only $u$ that can contribute to the equivariant differential are the constant (energy zero) ones. Hence, it is straightforward to determine
\begin{equation}
\begin{aligned}
& d_{\mathit{eq}}(x_{--}) = d_{\mathit{eq}}(x_{++}) = x_{-+} + x_{+-} + h(x_{--} + x_{++}), \\
& d_{\mathit{eq}}(x_{-+}) = d_{\mathit{eq}}(x_{+-}) = x_{--} + x_{++} + h(x_{-+} + x_{+-}).
\end{aligned}
\end{equation}
If we define equivariant Floer cohomology in the standard way, using $\mathit{CF}^*(L_0,L_1)[[h]]$, the resulting group $\mathit{HF}^*_{\mathit{eq}}(L_0,L_1)$ is zero (as must be the case for general reasons). However, for the polynomial version based on $\mathit{CF}^*(L_0,L_1)[h]$, one has
\begin{equation} \label{eq:poly-lagrangian}
\mathit{HF}^*_{\mathit{poly}}(L_0,L_1) \iso \bK[h]/(h^2+1).
\end{equation}
This saturates the bound given by the analogue of Lemma \ref{th:perturb-bound} (the dimension of \eqref{eq:poly-lagrangian} over $\bK$ equals the number of orbits of the free $\bZ/2$-action on $L_0 \cap L_1$). Of course, to obtain a geometric conclusion about equivariant non-displaceability, one would have to show the invariance of $\mathit{HF}^*_{\mathit{poly}}(L_0,L_1)$ under equivariant isotopies, which we have not done.
% kernel = (x_-- + x_++) and (x_{-+} + x_{+-}. h times one generator is identified with the other
% one.

%
% u connects the two: has energy 1, index 1, hence A(.) = 1.


\begin{thebibliography}{10}
%
%\bibitem{abouzaid-seidel}
%M.~Abouzaid and P.~Seidel.
%\newblock An open string analogue of {V}iterbo functoriality.
%\newblock {\em Geom. Topol.}, 14:627--718, 2010.

\bibitem{albers-cieliebak-frauenfelder14}
P.~Albers, K.~Cieliebak and U.~Frauenfelder.
\newblock {\em Symplectic Tate homology}.
\newblock Preprint arXiv:1405.2303, 2014.

\bibitem{arnold-avez}
V.~Arnold and A.~Avez.
\newblock {\em Probl\`emes ergodiques de la m\'ecanique classique}, volume~9 of
  {\em Monogr. Intern. de Math. Modernes}.
\newblock Gauthier-Villars, 1967.

\bibitem{arnold-givental}
V.~Arnold and A.~Givental.
\newblock Symplectic geometry.
\newblock In {\em Dynamical systems, {IV}}, volume~4 of {\em Encyclopaedia
  Math. Sci.}, pages 1--138. Springer, 2001.

\bibitem{austin-braam95}
D.~Austin and P.~Braam.
\newblock Morse-{B}ott theory and equivariant cohomology.
\newblock In {\em The {F}loer memorial volume}, volume 133 of {\em Progr.
  Math.}, pages 123--183. Birkh\"auser, 1995.

\bibitem{betz93}
M.~Betz.
\newblock {\em Categorical constructions in {M}orse theory and cohomology
  operations}.
\newblock PhD thesis, Stanford Univ., 1993.

\bibitem{betz-cohen94}
M.~Betz and R.~Cohen.
\newblock Graph moduli spaces and cohomology operations.
\newblock {\em Turkish J. Math.}, 18:23--41, 1994.

\bibitem{borel60}
A.~Borel.
\newblock {\em Seminar on Transformation Groups}.
\newblock Princeton Univ. Press, 1960.

\bibitem{bourgeois-oancea09b}
F.~Bourgeois and A.~Oancea.
\newblock The {G}ysin exact sequence for {$S^1$}-equivariant symplectic
  homology.
\newblock {\em J. Topol. Anal.}, 5:361--407, 2013.

\bibitem{brown}
K.~Brown.
\newblock {\em Cohomology of groups.}
\newblock Springer, 1994.

\bibitem{burghelea-haller01}
D.~Burghelea and S.~Haller.
\newblock On the topology and analysis of a closed one form. {I} ({N}ovikov's
  theory revisited).
\newblock In {\em Essays on geometry and related topics}, Monogr. Enseign. Math. vol 38, pages 133--175. Enseignement Math., 2001.

\bibitem{cerf}
\newblock J.~Cerf.
\newblock La stratification naturelle des espaces de fonctions diff{\'e}rentiables r{\'e}elles et le th{\'e}or{\`e}me de la pseudo-isotopie.
\newblock {\em Publ. Math. IHES}, 39:5--173, 1970.

\bibitem{cohen09}
R.~Cohen.
\newblock Floer homotopy theory, realizing chain complexes by module spectra, and manifolds with corners. 
\newblock In {\em Proceedsings of the Fourth Abel Symposium, Oslo, 2007}, pages 39--59. Springer, 2009.

\bibitem{cohen07}
R.~Cohen.
\newblock The Floer homotopy type of the cotangent bundle.
\newblock {\em Pure Appl. Math. Quarterly}, 6:391--438, 2010.

\bibitem{cohen-jones-segal95}
R.~Cohen, J.~Jones, and G.~Segal.
\newblock {F}loer's infinite-dimensional {M}orse theory and homotopy theory.
\newblock In {\em The {F}loer memorial volume}, pages 297--325. Birkh{\"a}user, 1995.

\bibitem{cohen-norbury12}
R.~Cohen and P.~Norbury.
\newblock Morse field theory.
\newblock {\em Asian J. Math.}, 16:661--711, 2012.

\bibitem{conley-zehnder84}
C.~Conley and E.~Zehnder.
\newblock Morse-type index theory for flows and periodic solutions of
Hamiltonian equations.
\newblock {\em Comm. Pure Appl. Math.}, 37:207--253, 1984.

\bibitem{dold59}
A.~Dold.
\newblock Sur les op\'erations de {S}teenrod.
\newblock {\em Bull. Soc. Math. France}, 87:331--339, 1959.

\bibitem{donaldson02b}
S.~Donaldson.
\newblock {\em Floer homology groups in {Y}ang-{M}ills theory}.
\newblock Cambridge Univ. Press, 2002.

\bibitem{dostoglou-salamon93}
S.~Dostoglou and D.~Salamon.
\newblock Instanton homology and symplectic fixed points.
\newblock In {\em Symplectic Geometry}, pages 57--94. Cambridge Univ. Press, 1993.

\bibitem{eilenberg-maclane43}
S.~Eilenberg and S.~MacLane.
\newblock Relations between homology and homotopy groups.
\newblock {\em Proc. Natl. Acad. Sci.}, 29:155--158, 1943.

\bibitem{eilenberg-zilber}
S.~Eilenberg and J.~Zilber.
\newblock On products of complexes.
\newblock {\em Amer. J. Math.} 75:200--204, 1953.

\bibitem{ekeland}
I.~Ekeland.
\newblock {\em Convexity methods in {H}amiltonian mechanics}.
\newblock Springer, 1990.

\bibitem{floer88}
A.~Floer.
\newblock Symplectic fixed points and holomorphic spheres.
\newblock {\em Commun. Math. Phys.}, 120:575--611, 1989.

%\bibitem{floer-hofer-salamon94}
%A.~Floer, H.~Hofer, and D.~Salamon.
%\newblock Transversality in elliptic {M}orse theory for the symplectic action.
%\newblock {\em Duke Math. J.}, 80:251--292, 1995.
%
\bibitem{froyshov10}
K.~Froyshov.
\newblock Monopole {F}loer homology for rational homology 3-spheres.
\newblock {\em Duke Math. J.}, 155(3):519--576, 2010.

\bibitem{fukaya93}
K.~Fukaya.
\newblock Morse homotopy, {$A_\infty$}-categories, and {F}loer homologies.
\newblock In {\em Proceedings of {GARC} workshop on Geometry and Topology}, pages 1--102. Seoul Natl. Univ., 1993.

\bibitem{fukaya93b}
K.~Fukaya.
\newblock Morse homotopy and its quantization.
\newblock In {\em Geometric topology (Athens, GA, 1993)},
  pages 409--440. Amer. Math. Soc., 1997.

\bibitem{gelfand-lidsky58}
I.~M. Gelfand and V.~Lidskii.
\newblock On the structure of the regions of stability of linear canonical
  systems of differential equations with periodic coefficients.
\newblock {\em Amer. Math. Soc. Transl. (2)}, 8:143--181, 1958.

\bibitem{ginzburg-gurel10}
V.~Ginzburg and B.~Gurel.
\newblock Local Floer homology and the action gap.
\newblock {\em J. Symplectic Geom.}, 8:323--357, 2010.

\bibitem{hendricks14}
K.~Hendricks.
\newblock A spectral sequence for the {F}loer cohomology of symplectomorphisms of trivial polarization class.
\newblock Preprint arXiv:1409.6009, 2014.

\bibitem{hofer-salamon95}
H.~Hofer and D.~Salamon.
\newblock Floer homology and {N}ovikov rings.
\newblock In {\em
  The {F}loer memorial volume}, 
  pages 483--524. Birkh{\"a}user, 1995.

\bibitem{huebschmann-kadeishvili91}
J.~Huebschmann and T.~Kadeishvili.
\newblock Small models for chain algebras.
\newblock {\em Math. Z.}, 207:245--280, 1991.

%\bibitem{hummel}
%C.~Hummel.
%\newblock Gromov's compactness theorem for pseudo-holomorphic curves.
%\newblock Birkh{\"a}user, 1997.
%
\bibitem{hutchings08}
M.~Hutchings.
\newblock Floer homology of families. {I}.
\newblock {\em Algebr. Geom. Topol.}, 8:435--492, 2008.

\bibitem{jones}
J.~Jones and S.~Petrack.
\newblock The fixed point theorem in equivariant cohomology.
\newblock {\em Trans. Amer. Math. Soc.}, 322:35--49, 1990.

\bibitem{kaledin08}
D.~Kaledin.
\newblock Non-commutative {H}odge-to-de {R}ham degeneration via the method of
  {D}eligne-{I}llusie.
\newblock {\em Pure Appl. Math. Q.}, 4:785--875, 2008.

\bibitem{kaledin09}
D.~Kaledin.
\newblock Cartier isomorphism and {H}odge theory in the non-commutative case.
\newblock In {\em Arithmetic geometry},
  pages 537--562. Amer. Math. Soc., 2009.

\bibitem{rickard}
S.~K{\"o}nig and A.~Zimmermann (with contributions by B. Keller, M. Linckelmann, J. Rickard and R. Rouquier).
\newblock {\em Derived equivalences for group rings.}
\newblock Springer, 1998.

\bibitem{krein50}
M.~Krein.
\newblock A generalization of some investigations of {A}. {M}. {L}yapunov on
  linear differential equations with periodic coefficients.
\newblock {\em Doklady Akad. Nauk SSSR (N.S.)}, 73:445--448, 1950.

\bibitem{lee}
Y.-J.~Lee.
\newblock Reidemeister torsion in {F}loer-{N}ovikov theory and counting pseudo-holomorphic tori. {I}.
\newblock {\em J. Symplectic Geom.}, 3:221--311, 2005.
  
\bibitem{lipshitz-treumann12}
R.~Lipshitz and D.~Treumann.
\newblock Noncommutative {H}odge-to-de {R}ham spectral sequence and the
  {H}eegaard {F}loer homology of double covers.
\newblock Preprint arXiv:1203.2963, 2012.

\bibitem{lipshitz-sarkar11}
R.~Lipshitz and S.~Sarkar.
\newblock A {K}hovanov homotopy type.
\newblock Preprint arXiv:1112.3932, 2011.

\bibitem{loday}
J.-L.~Loday.
\newblock {\em Cyclic homology.}
\newblock Springer, 1992.

\bibitem{manolescu03}
C.~Manolescu.
\newblock Seiberg-{W}itten-{F}loer stable homotopy type of three-manifolds with
  {$b_1=0$}.
\newblock {\em Geom. Topol.}, 7:889--932, 2003.

\bibitem{markl01}
M.~Markl.
\newblock Ideal Perturbation Lemma.
\newblock {\em Comm. Algebra}, 29:5209--5232, 2001.

%\bibitem{may}
%J.-P.~May.
%\newblock {\em The geometry of iterated loop spaces}.
%\newblock Springer, 1972.
%
\bibitem{mcduff-salamon}
D.~McDuff and D.~Salamon.
\newblock {\em $J$-holomorphic curves and symplectic topology}.
\newblock Amer. Math. Soc., 2004.

\bibitem{moser58}
J.~Moser.
\newblock New aspects in the theory of stability of {H}amiltonian systems.
\newblock {\em Comm. Pure Appl. Math.}, 11:81--114, 1958.

\bibitem{oh-appendix}
Y.-G.~Oh.
\newblock Addendum to: ``{F}loer cohomology of {L}agrangian intersections and pseudo-holomorphic disks. {I}.''
\newblock {\em Comm. Pure Appl. Math.}, 48:1299--1302, 1995.

\bibitem{qin11}
L.~Qin.
\newblock On the associativity of gluing.
\newblock Preprint arXiv:1107.5527, 2011.

%\bibitem{quillen}
%D.~Quillen.
%\newblock The Spectrum of an Equivariant Cohomology Ring: I.
%\newblock {\em Annals of Math.}, 94:549--572, 1971.
%
\bibitem{robinson70}
R.~Robinson.
\newblock Generic properties of conservative systems.
\newblock {\em Amer. J. Math.}, 92:562--603, 1970.

\bibitem{salamon-zehnder92}
D.~Salamon and E.~Zehnder.
\newblock Morse theory for periodic solutions of {H}amiltonian systems and the
  {M}aslov index.
\newblock {\em Comm. Pure Appl. Math.}, 45:1303--1360, 1992.

\bibitem{salamon99}
D.~Salamon.
\newblock Quantum products for mapping tori and the {A}tiyah-{F}loer conjecture.
\newblock In {\em Northern {C}alifornia {S}ymplectic {G}eometry {S}eminar}, pages 199--235. Amer. Math.Soc., 1999.

\bibitem{salamon}
D.~Salamon.
\newblock Lectures on {F}loer homology. 
\newblock In {\em Symplectic geometry and topology (Park City, UT, 1997)}, pages 143--229. Amer. Math Soc., 1999.

\bibitem{schwarz-morse}
M.~Schwarz.
\newblock {\em Morse homology}.
\newblock Birkh{\"a}user, 1993.

\bibitem{schwarz95}
M.~Schwarz.
\newblock {\em Cohomology operations from {$S^1$}-cobordisms in {F}loer
  homology}.
\newblock PhD thesis, {ETH} {Z}{\"u}rich, 1995.

\bibitem{seidel99}
P.~Seidel.
\newblock Graded {L}agrangian submanifolds.
\newblock {\em Bull. Soc. Math. France}, 128:103--146, 2000.

%\bibitem{seidel00}
%P.~Seidel.
%\newblock A long exact sequence for symplectic {F}loer cohomology.
%\newblock {\em Topology}, 42:1003--1063, 2003.
%
\bibitem{biased}
P.~Seidel
\newblock A biased survey of symplectic cohomology.
\newblock In {\em Current Developments in {M}athematics ({H}arvard, 2006)}, pages 211--253. International Press, 2008.

\bibitem{seidel-smith10}
P.~Seidel and I.~Smith.
\newblock Localization for involutions in {F}loer cohomology.
\newblock {\em Geom. Funct. Anal.}, 20:1464--1501, 2010.

\bibitem{serre}
J.-P. Serre.
\newblock Groupes d'homotopie et classes de groupes ab\'eliens. 
\newblock {\em Ann. of Math.}, 58:258--294, 1953.

%\bibitem{smith}
%P.~Smith.
%\newblock The topology of involutions.
%\newblock {\em Proc. Natl. Acad. Sci.}, 19:612--618, 1933.
%
\bibitem{spanier}
E.~Spanier.
\newblock {\em Algebraic topology.}
\newblock McGraw-Hill, 1966.

\bibitem{steenrod-epstein}
N.~Steenrod.
\newblock {\em Cohomology operations (Lectures by N. Steenrod written and
  revised by D. Epstein)}.
\newblock Princeton Univ. Press, 1962.

\bibitem{tate}
J.~Tate.
\newblock The higher dimensional cohomology groups of Class Field Theory.
\newblock {\em Annals of Math.}, 56:294--297, 1952.

\bibitem{tonkonog14}
D.~Tonkonog.
\newblock Commuting symplectomorphisms and {D}ehn twists in divisors.
\newblock Preprint arXiv:1405.4563, 2014.

\bibitem{viterbo97a}
C.~Viterbo.
\newblock Functors and computations in {F}loer homology with applications,
  {P}art {I}.
\newblock {\em Geom. Funct. Anal.}, 9:985--1033, 1999.

\bibitem{wehrheim12}
K.~Wehrheim.
\newblock Smooth structures on {M}orse trajectory spaces, featuring finite ends
  and associative gluing.
\newblock In {\em Proceedings of the {F}reedman {F}est}, pages 369--450. Geom. Topol. Publ., 2012.

\bibitem{wehrheim-woodward10}
K.~Wehrheim and C.~Woodward.
\newblock Quilted {F}loer cohomology.
\newblock {\em Geom. Topol.}, 14:833--902, 2010.

\bibitem{weibel}
C.~Weibel.
\newblock {\em An introduction to homological algebra.}
\newblock Cambridge Univ. Press, 1995.

\bibitem{williamson36}
J.~Williamson.
\newblock On the algebraic problem concerning the normal forms of linear
  dynamical systems.
\newblock {\em Amer. J. Math.}, 58:141--163, 1936.

\bibitem{zhao14}
J.~Zhao.
\newblock Periodic symplectic cohomologies. 
\newblock Preprint arXiv:1405.2084, 2014.

\end{thebibliography}
\end{document}